\title[Factorization through trees]{Lipschitz mappings, metric differentiability, and factorization through metric trees}
\author{Behnam Esmayli, Piotr Haj{\l}asz}
\address{B.\ Esmayli: Department of Mathematics, University of Pittsburgh, Pittsburgh, PA 15260, USA, {\tt bee23@pitt.edu}}
\address{P.\ Haj{\l}asz: Department of Mathematics, University of Pittsburgh, 301
  Thackeray Hall, Pittsburgh, PA 15260, USA, {\tt hajlasz@pitt.edu}}
\thanks{P.H. was supported by NSF grant DMS-2055171.}
\newtheorem{theorem}{Theorem}
\newtheorem{lemma}[theorem]{Lemma}
\newtheorem{corollary}[theorem]{Corollary}
\newtheorem{proposition}[theorem]{Proposition}
\theoremstyle{definition}
\newtheorem{remark}[theorem]{Remark}
\newtheorem{example}[theorem]{Example}
\newcommand{\barint}{
\rule[.036in]{.12in}{.009in}\kern-.16in \displaystyle\int }
\newcommand{\barcal}{\mbox{$ \rule[.036in]{.11in}{.007in}\kern-.128in\int $}}
\newcommand{\bbbn}{\mathbb N}
\newcommand{\bbbr}{\mathbb R}
\newcommand{\bbbs}{\mathbb S}
\newcommand{\bbbh}{\mathbb H}
\newcommand{\bbbb}{\mathbb B}
\def\diam{\operatorname{diam}}
\def\dist{\operatorname{dist}}
\def\rank{\operatorname{rank}}
\def\md{\operatorname{md}}
\def\H{{\mathcal H}}
\def\id{{\rm id\, }}
\def\lip{{\rm Lip\,}}
\def\eps{{\varepsilon}}
\def\mvint_#1{\mathchoice
          {\mathop{\vrule width 6pt height 3 pt depth -2.5pt
                  \kern -8pt \intop}\nolimits_{\kern -3pt #1}}%
          {\mathop{\vrule width 5pt height 3 pt depth -2.6pt
                  \kern -6pt \intop}\nolimits_{#1}}%
          {\mathop{\vrule width 5pt height 3 pt depth -2.6pt
                  \kern -6pt \intop}\nolimits_{#1}}%
          {\mathop{\vrule width 5pt height 3 pt depth -2.6pt
                  \kern -6pt \intop}\nolimits_{#1}}}
\numberwithin{theorem}{section} \numberwithin{equation}{section}
\begin{document}

\subjclass[2020]{Primary: 28A75, 30L99,  51F30, Secondary: 28A78, 53C17, 53C23, 54F45, 54F50}
\keywords{Lipschitz maps, metric derivative, area formula, Heisenberg groups, metric trees, factorization, topological dimension}
\sloppy

\date{\today}

\begin{abstract}
Given a Lipschitz map $f$ from a cube into a metric space, we find several equivalent conditions for $f$ to have a Lipschitz factorization through a metric tree. As an application we prove a recent conjecture of David and Schul. The techniques developed for the proof of the factorization result yield several other new and seemingly unrelated results. We prove that if $f$ is a Lipschitz mapping from an open set in $\mathbb{R}^n$ onto a metric space $X$, then the topological dimension of $X$ equals $n$ if and only if $X$ has positive $n$-dimensional Hausdorff measure. We also prove an area formula for length-preserving maps between metric spaces, which gives, in particular, a new formula for integration on countably rectifiable sets in the Heisenberg group. 
\end{abstract}

\maketitle


\section{Introduction}
\label{Intro}

The purpose of this paper is to discuss factorization of Lipschitz mappings between metric spaces. Given metric spaces $X,Y,Z$, and a Lipschitz map $f:X\to Y$, we say that {\em $f$ factors through $Z$} if there are Lipschitz mappings $\psi:X\to Z$ and $\phi:Z\to Y$ such that $f=\phi\circ\psi$. 

Given a Lipschitz map $f\colon X\to Y$, our aim is to construct a space $Z$ with a simple structure, along with a factorization $f=\phi\circ\psi$. In particular, we are interested in answering the question under what conditions, $f$ factors through a metric tree (see Section~\ref{TR} for the definition of a metric tree).
This question was partially motivated by the recent works of Wenger and Young \cite{wengery} and David and Schul \cite{DS}. 
The next result which is one of the main results of the paper, provides several equivalent conditions for a factorization of a Lipschitz map through a metric tree.

Throughout the paper $n$ and $m$ will stand for nonnegative integers.
\begin{theorem}
\label{TM1}
If $f:Q_o=[0,1]^{n}\to X$, $n\geq 2$, is a Lipschitz map into a metric space, then the following conditions are equivalent:
\begin{itemize}
\item[(a)] $f$ factors through a metric tree.
\item[(b)] $\rank\md(f,x)\leq 1$ almost everywhere.
\item[(c)] $\Theta^{*2}(f,x)=0$ almost everywhere.
\item[(d)] $\Theta^{2}_*(f,x)=0$ almost everywhere. 
\item[(e)] $\H^{2,n-2}_\infty(f,Q_o)=0$.
\item[(f)] $\hat{\H}^{2,n-2}_\infty(f,Q_o)=0$.
\end{itemize}
\end{theorem}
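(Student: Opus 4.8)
\emph{Overall plan.} I would prove $(a)\Rightarrow(b)$, then establish the equivalence of the ``infinitesimal'' conditions $(b)\Leftrightarrow(c)\Leftrightarrow(d)\Leftrightarrow(e)\Leftrightarrow(f)$ as a block (all of them expressing, in the languages of the metric differential, of densities, or of mixed Hausdorff contents, that $f$ has no genuinely two‑dimensional spreading), and finally prove $(f)\Rightarrow(a)$ by constructing a metric tree, which I expect to be the crux. For $(a)\Rightarrow(b)$: write $f=\phi\circ\psi$ with $\psi\colon Q_o\to T$ Lipschitz into a metric tree and $\phi\colon T\to X$ an $L$‑Lipschitz map. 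At a.e.\ $x$ both maps are metrically differentiable and, from $d_X(f(y),f(z))\le L\,d_T(\psi(y),\psi(z))$, the seminorms satisfy $\md(f,x)\le L\,\md(\psi,x)$, so $\ker\md(\psi,x)\subseteq\ker\md(f,x)$ and $\rank\md(f,x)\le\rank\md(\psi,x)$; hence it suffices to show that any Lipschitz map into a tree has metric differential of rank $\le1$ a.e. Fixing a point $x$ of metric differentiability and writing $s=\md(\psi,x)$, I would feed the four points $\psi(x\pm tu),\,\psi(x\pm tv)$ (for linearly independent $u,v$) into the four‑point ($0$‑hyperbolicity) inequality of $T$; dividing by $t$ and letting $t\to0$, metric differentiability turns this into $s(u)+s(v)\le\max\{s(u+v),\,s(u-v)\}$, and together with the triangle inequality this forces $s(u+v)=s(u)+s(v)$ or $s(u-v)=s(u)+s(v)$ for \emph{every} pair $u,v$. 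A short convexity argument then rules out any seminorm of rank $\ge 2$: on a two‑plane transverse to $\ker s$ the condition says that for every direction $\xi$ the flat subarc of the unit sphere of $s$ through $\xi$, together with its antipode, covers the whole circle of directions, which is impossible because such a flat subarc subtends an angle $<\pi$ at the origin. So $\rank\md(f,x)\le1$ a.e.

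\emph{The infinitesimal block $(b)\Leftrightarrow(c)\Leftrightarrow(d)\Leftrightarrow(e)\Leftrightarrow(f)$.} From $(b)$ I would pass, by Egorov and Lusin, to a compact $K\subseteq Q_o$ with $|Q_o\setminus K|$ arbitrarily small on which $f$ is uniformly metrically differentiable and $x\mapsto\md(f,x)$ is continuous; over $K$, $\rank\md(f,\cdot)\le1$ means that on each small ball $f$ is an $\eps r$‑approximation of a $1$‑Lipschitz function of a single linear functional, so $f(B(x,r)\cap K)$ is covered by $O(1/\eps)$ sets of diameter $O(\eps r)$. This yields $\Theta^{*2}(f,x)=0$ a.e., hence $(c)$, and trivially $(d)$; a Vitali/Besicovitch covering then upgrades the pointwise density bound to vanishing of the mixed contents, a coarea/Fubini reduction to two‑plane slices plus Kirchheim's area formula ($\mathcal H^2$ of a slice image equals $\int\mathbf J_2\md$) handling the passage between ``a.e.\ point'' and ``a.e.\ slice'', and $(e)\Leftrightarrow(f)$ being a routine (doubling‑type) comparison of the two definitions of the content. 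For the reverse implications I would argue contrapositively: if $\rank\md(f,\cdot)=2$ on a set of positive measure, then by Fubini for a.e.\ two‑plane slice $f$ has positive two‑Jacobian on a set of positive $\mathcal H^2$‑measure, hence positive $\mathcal H^2$‑image there by the area formula, which gives positive lower two‑density at such points and nonvanishing of $\mathcal H^{2,n-2}_\infty$ and $\hat{\mathcal H}^{2,n-2}_\infty$, so each of $(c)$–$(f)$ fails. This block is technical but uses only standard metric‑differentiation and covering machinery.

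\emph{$(f)\Rightarrow(a)$ — the main obstacle.} I would define on $Q_o$ the pseudometric $\rho(x,y)=\inf\{\diam f(\gamma):\gamma\text{ a curve in }Q_o\text{ joining }x\text{ and }y\}$, let $Z$ be the associated metric space (the completion of $Q_o/\{\rho=0\}$), $\psi\colon Q_o\to Z$ the quotient map and $\phi\colon Z\to X$ the induced map; then $\psi$ is $\lip(f)$‑Lipschitz (since $\rho(x,y)\le\diam f([x,y])$), $\phi$ is well defined and $1$‑Lipschitz (since $d_X(f(x),f(y))\le\rho(x,y)$), and $f=\phi\circ\psi$. Since $Q_o$ is path‑connected, $Z$ — or, if need be, the length space it induces — is geodesic, so $Z$ is a metric tree exactly when $\rho$ satisfies the four‑point condition, and this is the one place where $(f)$ must genuinely be used. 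I expect that if the four‑point condition failed at some $a,b,c,d$ with a definite gap $\delta>0$, then the almost‑optimal curves joining the six pairs would organize into a network in $Q_o$ across which $f$ must oscillate by $\gtrsim\delta$ on a genuinely two‑dimensional family of slices, and that a filling/isoperimetric‑type estimate — a map of a disk whose boundary has $\rho$‑diameter $\ge\delta$ must carry mixed two‑content $\gtrsim\delta^2$ — together with a Jordan‑curve/general‑position argument to locate this two‑dimensional piece, would contradict $\hat{\mathcal H}^{2,n-2}_\infty(f,Q_o)=0$. Making this quantitative in a general metric target, and carrying out the bookkeeping uniformly for all $n\ge2$, is the step I anticipate to be the hard part; everything else is a combination of the metric differentiability theory above and routine covering arguments.
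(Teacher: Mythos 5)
Your plan establishes $(a)\Rightarrow(b)$ and, modulo gaps, $(b)\Rightarrow(c)\Rightarrow(d)\Rightarrow(e)\Rightarrow(f)$, but the crucial closing implication $(f)\Rightarrow(a)$ is not proved: you explicitly flag the filling/isoperimetric step as "the hard part" and only describe what you hope it would yield. So this is not yet a proof. The paper closes the loop differently: it proves $(a)\Leftrightarrow(b)$ as a standalone theorem (Theorem~\ref{TM7}) and, separately, $(b')\Leftrightarrow\cdots\Leftrightarrow(f')$ (Theorem~\ref{TM1B}). Since you have already shown $(f)\Rightarrow(b)$ inside your block, the strategically better move is to prove $(b)\Rightarrow(a)$ instead of $(f)\Rightarrow(a)$ --- then you get to use the pointwise metric-derivative information directly. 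In the paper, $(b)\Rightarrow(a)$ goes via the \emph{length}-based quotient $Z_\Phi$ from Section~\ref{FLM} (not your diameter-based $\rho$, which need not give a length space without further work), and tree-ness of $Z_\Phi$ is verified through condition~(g) of Lemma~\ref{T33}: one shows $\int_{\bbbs^1}(\pi\circ\alpha)^*(x\,dy)=0$ by extending $\alpha$ to a disk, writing the integral as $\int\det D(\pi\circ\psi\circ g)$ via Stokes (Lemma~\ref{tt1}), and using $\rank\md(f,\cdot)\le1$ together with Proposition~\ref{T28} to force the Jacobian to vanish a.e.; a uniform-approximation argument (Lemmata~\ref{T34} and~\ref{ma1}) handles curves in $Z_\Phi$ that are not of the form $\psi\circ\gamma$. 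This is exactly the point your four-point-condition approach does not reach.

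A few further remarks on the parts you do have. Your derivation of $(a)\Rightarrow(b)$ via the four-point ($0$-hyperbolicity) inequality and the resulting seminorm identity $s(u)+s(v)\in\{s(u+v),s(u-v)\}$, ruled out for rank $\ge2$ by the flat-face argument, is a genuinely different and rather elegant route; the paper instead deduces $(a)\Rightarrow(b)$ from the general fact (Theorem~\ref{T16}) that Lipschitz maps into spaces of topological dimension $k$ have $\rank\md\le k$ a.e., specialized to $\bbbr$-trees ($\dim=1$), together with the rank inequality for compositions (Lemma~\ref{T32}). Both work; yours is more elementary for trees, the paper's yields a broader topological-dimension statement as a by-product.

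In your infinitesimal block two points need repair. First, your phrase that $(e)\Leftrightarrow(f)$ is "a routine (doubling-type) comparison of the two definitions" is misleading: only $\hat\H_\infty^{n,m}\le\H_\infty^{n,m}$ is known (that gives $(e)\Rightarrow(f)$ for free), and whether the two contents are comparable is an open question in the paper; the reverse direction must be routed through $(f)\Rightarrow(b)$. Second, your contrapositive for $(f)\Rightarrow(b)$ asserts that positive lower $2$-density on a set of positive measure forces $\hat\H_\infty^{2,n-2}(f,\cdot)>0$, but for the $\hat\H$ content with \emph{arbitrary} coverings this is far from automatic. The paper instead proves $(f')\Rightarrow(b')$ by projecting to $\bbbr^n$ (Proposition~\ref{T26}) and invoking the classical coarea formula (Lemma~\ref{T22}): vanishing content forces the upper integral of $\H^m(F^{-1}(y)\cap E)$ to be zero, hence $J_F=0$ a.e.\ on $E$. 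That coarea step is the actual content of the implication and should appear explicitly.
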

Notation used in Theorem~\ref{TM1} is briefly introduced after the statement of Theorem~\ref{TM1B}, and carefully explained in the subsequent sections.

\begin{remark}
In fact, in (a) we obtain quantitative estimates for the Lipschitz constants. Precisely, if $f$ is $L$-Lipschitz, then we find a metric tree $Z$ and maps $\psi\colon Q_o \to Z$ and $\phi\colon Z \to X$ such that $\psi$ is $L$-Lipschitz, $\phi$ is $1$-Lipschitz and $f=\phi \circ \psi$. The bounds follow from Lemma~\ref{FTT2} and the fact that the cube is $1$-quasiconvex.
\end{remark}

Equivalence of conditions (a) and (e) proves a recent conjecture of David and Schul \cite[Conjecture~1.13]{DS}. They conjectured that if  $f\colon Q_o=[0,1]^3\to X$ satisfies $\H^{2,1}_\infty(f,Q_o)=0$, then $f$ factors through a metric tree.

Recently David and Schul \cite{DS2}, used our result (implication (e) $\Rightarrow$ (a)) to prove a quantitative part of Conjecture~1.13 from \cite{DS} which states that if the content $\H^{2,1}_\infty(f,Q_o)$ is small, then $f$ is close to a mapping $g$ that factors through a tree. 

More precisely, they proved that for every $\eps>0$ there is $\delta=\delta(\eps,m)$, $m\geq 1$, such that if $f:Q_o=[0,1]^{2+m}\to \ell^\infty$ satisfies $\H^{2,m}_\infty(f,Q_o)<\delta$, then $f$ is within distance of $\eps$ to a map $g:Q_o\to\ell^\infty$ that factors through a metric tree. In fact, they proved that $f$ is within distance of $\eps$ to  $g$ such that $\H^{2,m}_\infty(g,Q_o)=0$ and they used 
implication (e)$\Rightarrow$(a) of
Theorem~\ref{TM1} to conclude that $g$ factors through a metric tree.

The equivalence of conditions (b)-(f) in Theorem~\ref{TM1} is a consequence of a more general result:
\begin{theorem}
\label{TM1B}
If $f:[0,1]^{n+m}\to X$, $n\geq 1$, $m\geq 0$, is a Lipschitz map into a metric space, and $E\subset [0,1]^{n+m}$ is a measurable set, then the following conditions are equivalent:
\begin{itemize}
\item[(b')] $\rank\md(f,x)\leq n-1$ almost everywhere in $E$.
\item[(c')] $\Theta^{*n}(f,x)=0$ almost everywhere in $E$.
\item[(d')] $\Theta^{n}_*(f,x)=0$ almost everywhere in $E$. 
\item[(e')] $\H^{n,m}_\infty(f,E)=0$.
\item[(f')] $\hat{\H}^{n,m}_\infty(f,E)=0$.
\end{itemize}
\end{theorem}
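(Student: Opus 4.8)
\emph{Overview.} The plan is to reduce everything to the metric differential and prove the cycle of implications in two stages: first the case $m=0$, where Kirchheim's area formula for Lipschitz maps of $\mathbb{R}^n$ into a metric space applies directly, and then the general case by slicing $[0,1]^{n+m}$ along the cosets of a generic $n$-dimensional subspace.

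\emph{The density conditions.} The implications (b') $\Rightarrow$ (c') $\Rightarrow$ (d') $\Rightarrow$ (b') I would obtain directly from Kirchheim's metric differentiability theorem, for arbitrary $m$. That theorem gives, at a.e.\ $x$, $\sup_{|u|,|v|\le 1}\bigl|d(f(x+ru),f(x+rv)) - \md(f,x)(r(u-v))\bigr| = o(r)$, so $f(B(x,r))$ lies within Hausdorff distance $o(r)$ of an isometric copy of $rK_x$, where $K_x$ is the unit ball of the normed space obtained by collapsing $\ker\md(f,x)$, of dimension $\rank\md(f,x)$ and diameter $\le 2L$. If $\rank\md(f,x)\le n-1$, covering this low-dimensional body by $O(\delta^{-(n-1)})$ sets of diameter $\delta r$ and thickening by $o(r)$ shows $\H^n_\infty(f(B(x,r))) = o(r^n)$, hence $\Theta^{*n}(f,x)=0$; this is (b') $\Rightarrow$ (c'), and (c') $\Rightarrow$ (d') holds because $\Theta^n_*\le\Theta^{*n}$. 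For (d') $\Rightarrow$ (b') I argue by contraposition: if $\rank\md(f,x)\ge n$ on a set $E'\subset E$ of positive measure, then for a.e.\ $x\in E'$ there is an $n$-dimensional subspace $W_x$ on which $\md(f,x)$ is a norm comparable to $|\cdot|$; by Kirchheim, $v\mapsto f(x+v)$ restricted to $W_x\cap\bar B(0,r)$ is an approximate isometry, with additive error $o(r)$, of an $n$-disk, and composing it with a suitable $O(1)$-Lipschitz ``difference of distances'' map $X\to\mathbb{R}^n$ built from $2n$ points near $f(x)$ gives a map that, up to an $o(r)$ perturbation, is bi-Lipschitz onto a neighborhood of the origin; by stability of the Brouwer degree its image contains a ball $B^n(0,cr)$, so $\H^n_\infty(f(B(x,r)))\gtrsim r^n$ and $\Theta^n_*(f,x)>0$, contradicting (d'). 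Hence (b') $\Leftrightarrow$ (c') $\Leftrightarrow$ (d').

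\emph{The content conditions.} It remains to tie (e') and (f') to (b'). For $m=0$ this follows from the area formula: if $\rank\md(f,x)\le n-1$ a.e.\ in $E$ then the metric Jacobian vanishes a.e.\ in $E$, so $\int_E J(\md(f,x))\,dx = 0 = \int_X N(f,E,y)\,d\H^n(y)$, whence $\H^n(f(E))=0$, and since $\H^n_\infty(S)=0$ iff $\H^n(S)=0$ this gives $\H^{n,0}_\infty(f,E)=\hat\H^{n,0}_\infty(f,E)=0$; conversely, if $\rank\md(f,x)=n$ on a positive-measure subset then $\int_E J > 0$, so $\H^n(f(E))>0$ and the contents are positive. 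For general $m$ I fix an $n$-dimensional subspace $V$ that is \emph{generic for $f$} --- via a Fubini argument on $\mathrm{Gr}(n,n+m)\times[0,1]^{n+m}$, using measurability of $x\mapsto\ker\md(f,x)$, so that $V\cap\ker\md(f,x)=\{0\}$ whenever $\rank\md(f,x)\ge n$ --- and slice along the cosets $\pi^{-1}(t)$ of $V$, $\pi\colon[0,1]^{n+m}\to\mathbb{R}^m$. By Fubini, for a.e.\ $t$ the slice map $f|_{\pi^{-1}(t)}$ is Lipschitz, metrically differentiable $\H^n$-a.e.\ with metric differential $\md(f,\cdot)|_V$, and (b') holds for $f$ iff for a.e.\ $t$ the slice map satisfies (b') on $\pi^{-1}(t)\cap E$; by the $m=0$ case this is in turn equivalent to $\H^n_\infty(f(\pi^{-1}(t)\cap E))=0$ for a.e.\ $t$. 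A Fubini-type estimate bounding $\H^{n,m}_\infty(f,E)$ and $\hat\H^{n,m}_\infty(f,E)$ above by a multiple of $\int^{*}_{\mathbb{R}^m}\H^n_\infty(f(\pi^{-1}(t)\cap E))\,dt$, together with an Eilenberg-type inequality in the reverse direction, then shows that each of (e') and (f') is equivalent to this slice condition, which closes the chain.

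\emph{The main obstacle.} Taking Kirchheim's metric differentiability and area formulas as known, the real work lies in the geometry of the non-standard content $\H^{n,m}_\infty$. The crux is the pair of integral-geometric inequalities in the last step --- the Eilenberg-type lower bound $\H^{n,m}_\infty(f,E)\gtrsim\int^{*}_{\mathbb{R}^m}\H^n_\infty(f(\pi^{-1}(t)\cap E))\,dt$, used to turn positive mass on a positive-measure family of slices into positive content, and the Fubini-type reverse bound, used to assemble an efficient covering of $E$ from efficient coverings of the slice images --- since it is here that the $m$ ``flat'' directions enter, and both inequalities must be proved directly from the definitions; the comparison between $\H^{n,m}_\infty$ and $\hat\H^{n,m}_\infty$ is of the same nature. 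Secondary points are the Fubini argument producing a direction generic for a fixed $f$, and the degree-theoretic argument turning an approximate isometry of an $n$-disk into a lower bound on the $n$-content of its image.
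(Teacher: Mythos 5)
Your treatment of (b')$\Leftrightarrow$(c')$\Leftrightarrow$(d') via Kirchheim's metric differentiability is essentially the paper's argument; the paper packages the covering step as Proposition~\ref{Tl14}, and instead of your direct degree-theoretic (d')$\Rightarrow$(b') it closes that loop by going through (e') and (f'). The real divergence, and the real problem, is in how you tie in the content conditions.

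For the content conditions the paper does not slice the domain at all. It proves (d')$\Rightarrow$(e') by a pure Lebesgue-density/Vitali argument on dyadic cubes, showing $\H^{n,m}_\infty(f,E)\lesssim_{n,m}\int_E\Theta^n_*(f,x)\,d\H^{n+m}(x)$ (Proposition~\ref{T37}); and it proves (f')$\Rightarrow$(b') by projecting the target to $\mathbb{R}^n$, setting $F=\pi\circ f:\,[0,1]^{n+m}\to\mathbb{R}^n$, and applying the classical coarea formula to $F$ (Lemma~\ref{T22}). Your Eilenberg-type lower bound $\int^*\H^n_\infty(f(\pi^{-1}(t)\cap E))\,dt\lesssim\hat\H^{n,m}_\infty(f,E)$ is fine and plays a role analogous to the paper's Lemma~\ref{T22}. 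But the Fubini-type \emph{upper} bound $\H^{n,m}_\infty(f,E)\lesssim\int^*\H^n_\infty(f(\pi^{-1}(t)\cap E))\,dt$ is the load-bearing step for (b')$\Rightarrow$(e'), and I do not see how it can be proved as stated. The slice content $\H^n_\infty(f(\pi^{-1}(t)\cap E))$ is a single global number per slice and carries no information at small scales, whereas $\H^{n,m}_\infty$ is defined through coverings by cubes of all sizes; to ``assemble a covering of $E$ from efficient coverings of the slice images'' one would have to thicken an efficient covering of a slice in the $m$ transverse directions, and under a $\delta$-thickening the image of each covering set is only controlled up to Hausdorff distance $L\delta$, which destroys the efficiency of a covering whose pieces have diameters $\ll\delta$. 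This is exactly the difficulty the paper's Proposition~\ref{T37} sidesteps by working with the lower density $\Theta^n_*(f,x)$: that quantity is local, so a Lebesgue point of $\Theta^n_*(f,\cdot)$ directly produces a small dyadic cube with controlled $\H^n_\infty(f(Q))(\diam Q)^m$, and a Vitali selection finishes. In short, the gap is that your (b')$\Rightarrow$(e') route reduces to an unproved disintegration inequality for $\H^{n,m}_\infty$ that is not a standard Fubini statement and likely needs the same local-density machinery the paper uses; the slicing reduction is not a shortcut.

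Two secondary remarks. First, your direct (d')$\Rightarrow$(b') via an approximate isometry plus degree theory is a genuinely new route (the paper goes around the cycle instead), and it is plausible: it is close in spirit to the paper's proof of Theorem~\ref{T16}, which builds a map to $\bbbs^k$ from a projection and applies Lemma~\ref{T30}, but to get a lower bound on $\H^n_\infty(f(B(x,r)))$ from degree you must still produce a Lipschitz map $X\to\mathbb{R}^n$ whose composition with $f$ is a small perturbation of an embedding; this is doable via coordinate projections in $\ell^\infty$, and is essentially what Proposition~\ref{T26} exploits. Second, your claim that ``(b') for $f$ iff a.e.\ slice satisfies (b')'' for a generic $V$ is correct, but note it only uses the generic condition in one direction; the other direction is automatic because $\rank\md(f,x)\leq n-1$ forces $\ker\md(f,x)$ to meet every $n$-dimensional $V$ nontrivially by dimension count.
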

Let us now briefly explain notation used above.

A {\em metric tree}, also known as an {\em $\bbbr$-tree}, is a geodesic space which contains no subsets homeomorphic to $\bbbs^1$, so it is a geodesic space without ``loops''. Other equivalent definitions are explained in Section~\ref{TR}.

Kirchheim \cite{kir}, proved that every Lipschitz map $f:\Omega\to X$ from an open set $\Omega\subset\bbbr^n$ into a metric space is {metrically differentiable} a.e. 
With the metric derivative $\md(f,x)$, we can associate its rank (see Section~\ref{KRT} for details).

Following \cite{HZ},
for a mapping $f:Q_o=[0,1]^k\to X$ into a metric space, and $x$ in the interior of $Q_o$, we define the {\em upper} and the {\em lower $n$-densities} by
$$
\Theta^{*n}(f,x)=\limsup_{r\to 0}\frac{\H^n_\infty(f(B(x,r)))}{\omega_n r^n},
\quad
\Theta^{n}_*(f,x)=\liminf_{r\to 0}\frac{\H^n_\infty(f(B(x,r)))}{\omega_n r^n},
$$
where $\H^n_\infty$ is the Hausdorff content (see Section~\ref{HHC}) and $\omega_n$ is the volume of the unit ball in $\bbbr^n$.

For a Lipschitz mapping $f:Q_o=[0,1]^{n+m}\to X$, $n\geq 1$, $m\geq 0$, into a metric space,
Azzam and Schul \cite{azzams} defined the {\em $(n,m)$-mapping content} of a set $E\subset Q_o$. However, we shall use a slightly different version of this definition that was recently introduced  by David and Schul~\cite{DS}:
$$
\H_\infty^{n,m}(f,E) =
\inf\sum_i\H^n_\infty(f(Q_i))(\diam (Q_i))^m,
$$
where the infimum is taken over all coverings $E\subset\bigcup_i Q_i\subset Q_o$ by closed dyadic cubes $Q_i$.
Since any two dyadic cubes either have disjoint interiors or one is a subset of another one, by removing unnecessary cubes, we may assume that all cubes in the covering have pairwise disjoint interiors. 

Observe that
\begin{equation}
\label{eq28}
\H^{n,m}_\infty(f,E)=\H^{n,m}_\infty(f,\tilde{E})
\quad
\text{if $\tilde{E}\subset E$ and $\H^{n+m}(E\setminus\tilde{E})=0$.}
\end{equation}
If the coverings are allowed to be by arbitrary sets, we denote the analogous content by 
$$
\hat{\H}_\infty^{n,m}(f,E) =
\inf\sum_i\H^n_\infty(f(A_i))(\diam (A_i))^m,
$$
where 
$E\subset Q_o$ , and
the infimum is taken over all coverings $E\subset\bigcup_i A_i\subset Q_o$ by arbitrary sets. Obviously, for any set $E$,
\begin{equation}
    \label{elm6}
\hat{\H}_\infty^{n,m}(f,E) \leq 
{\H}_\infty^{n,m}(f,E),
\end{equation}
however, it is not known if the two quantities are comparable \cite[Question~1.15]{DS}. 

The next result provides an equivalent definition for $\hat{\H}^{n,m}_\infty(f,E)$. For a proof see \cite[Lemma~7.3]{HajlaszE}. We will not use this result in the paper.
\begin{lemma}
If $f:Q_o=[0,1]^{n+m}\to X$ is Lipschitz and $E\subset [0,1]^{n+m}$, then
$$
\hat{\H}^{n,m}_\infty(f,E)=\inf\sum_i\frac{\omega_n}{2^n}\big(\diam f(A_i)\big)^n(\diam A_i)^m,
$$
where the infimum is taken over all coverings $E\subset\bigcup_i A_i\subset Q_o$.
\end{lemma}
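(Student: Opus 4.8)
The plan is to sandwich $\hat{\H}^{n,m}_\infty(f,E)$ between two copies of the right-hand quantity, which I abbreviate
$$
R(f,E):=\inf\sum_i\frac{\omega_n}{2^n}\big(\diam f(A_i)\big)^n(\diam A_i)^m,
$$
the infimum being over all countable coverings $E\subset\bigcup_iA_i\subset Q_o$. One inequality is immediate: for any $S\subset X$, using the one-element cover $\{S\}$ in the definition of the Hausdorff content gives $\H^n_\infty(S)\le\frac{\omega_n}{2^n}(\diam S)^n$ — this is exactly where the normalizing constant $\omega_n/2^n$ enters. Applying this with $S=f(A_i)$ for every set of a covering of $E$, multiplying by $(\diam A_i)^m$, summing, and taking the infimum over coverings yields $\hat{\H}^{n,m}_\infty(f,E)\le R(f,E)$.

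The content of the lemma is the reverse inequality $R(f,E)\le\hat{\H}^{n,m}_\infty(f,E)$, which I would obtain by a one-step refinement of an almost optimal covering. First note $\hat{\H}^{n,m}_\infty(f,E)<\infty$, since $f$ is Lipschitz and $Q_o$ is bounded (the trivial covering by $Q_o$ itself already gives a finite value). Fix $\eps>0$ and choose a countable covering $E\subset\bigcup_iA_i\subset Q_o$, with no empty sets, such that $\sum_i\H^n_\infty(f(A_i))(\diam A_i)^m<\hat{\H}^{n,m}_\infty(f,E)+\eps$. Each $f(A_i)$ is bounded, so $\H^n_\infty(f(A_i))<\infty$, and by the definition of $\H^n_\infty$ I can pick, for each $i$, a countable covering $f(A_i)\subset\bigcup_jU_{ij}$ by subsets of $X$ with $\sum_j\frac{\omega_n}{2^n}(\diam U_{ij})^n<\H^n_\infty(f(A_i))+\eps\,2^{-i}$. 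Put $A_{ij}:=A_i\cap f^{-1}(U_{ij})\subset Q_o$. Since $f(A_i)\subset\bigcup_jU_{ij}$, we have $\bigcup_jA_{ij}=A_i$, so $\{A_{ij}\}_{i,j}$ is again a countable covering of $E$ by subsets of $Q_o$, while $f(A_{ij})\subset U_{ij}$ and $A_{ij}\subset A_i$ give $\diam f(A_{ij})\le\diam U_{ij}$ and $\diam A_{ij}\le\diam A_i$. Hence
\begin{align*}
R(f,E)
&\le\sum_{i,j}\frac{\omega_n}{2^n}\big(\diam f(A_{ij})\big)^n(\diam A_{ij})^m
\le\sum_i(\diam A_i)^m\sum_j\frac{\omega_n}{2^n}(\diam U_{ij})^n\\
&\le\sum_i\H^n_\infty(f(A_i))(\diam A_i)^m+\eps\sum_i 2^{-i}(\diam A_i)^m
\le\hat{\H}^{n,m}_\infty(f,E)+\eps\big(1+C(n,m)\big),
\end{align*}
where $C(n,m)$ bounds $\sum_i 2^{-i}(\diam A_i)^m$ because $\diam A_i\le\diam Q_o=\sqrt{n+m}$. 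Letting $\eps\to0$ gives $R(f,E)\le\hat{\H}^{n,m}_\infty(f,E)$, and together with the first inequality this proves the lemma.

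I do not expect a genuine obstacle; the argument is elementary, and the only points requiring care are organizational. One must use the Hausdorff content with the normalization that produces the constant $\omega_n/2^n$; one must check that the pulled-back sets $A_{ij}=A_i\cap f^{-1}(U_{ij})$ still cover $E$, stay inside $Q_o$, and have diameters no larger than those of $A_i$, so that both the $n$-th power and the $m$-th power factors only decrease; and one must control the accumulated error via the uniform estimate $\diam A_i\le\diam Q_o$, which is the single place where boundedness of the cube is used.
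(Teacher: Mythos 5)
Your proof is correct and self-contained. The paper itself does not give a proof of this lemma but defers to Lemma~7.3 of \cite{HajlaszE}; your argument is the natural one and almost certainly matches that source: the easy inequality $\hat{\H}^{n,m}_\infty(f,E)\le R(f,E)$ from the trivial bound $\H^n_\infty(S)\le\frac{\omega_n}{2^n}(\diam S)^n$, and the reverse inequality by refining a near-optimal cover $\{A_i\}$ into $\{A_i\cap f^{-1}(U_{ij})\}$ where $\{U_{ij}\}_j$ nearly realizes $\H^n_\infty(f(A_i))$, using that both diameter factors can only decrease under this refinement and that $\diam A_i\le\diam Q_o$ bounds the accumulated error.
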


In the course of the proofs we obtained other equally important results that are seemingly unrelated to the theorems listed above.
\begin{theorem}
\label{TM4}
Suppose that $f:\mathbb{R}^n\supset\Omega\to X$ is a Lipschitz continuous map from an open set onto a metric space $X$, $f(\Omega)=X$. Then, $\dim X=n$ if and only if $\mathcal{H}^n(X)>0$.
\end{theorem}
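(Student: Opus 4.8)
My plan is to deduce the equivalence from three facts, the last of which is where the hypothesis genuinely enters. \textbf{(A)} Since $f$ is Lipschitz, the Hausdorff dimension of $X=f(\Omega)$ is at most $\dim_{\H}\Omega=n$. \textbf{(B)} For every separable metric space $Y$ --- and $X$ is separable, being a continuous image of $\Omega$ --- one has $\dim Y\ge k\Rightarrow\H^{k}(Y)>0$; this is a classical theorem of Szpilrajn, which I would if necessary reprove by induction on $k$: for $k=1$, given $y_0\in Y$ and $\varepsilon>0$ the set of distances $\{\dist(y,y_0):y\in Y\}$ has zero $\H^1$-measure when $\H^1(Y)=0$, so some $s\in(0,\varepsilon)$ is unattained and $\{y:\dist(y,y_0)<s\}$ is a clopen neighborhood of $y_0$ inside $B(y_0,\varepsilon)$, whence $\dim Y\le0$; for the step, apply Eilenberg's inequality $\int^{*}_{\bbbr}\H^{k-1}(\phi^{-1}(t))\,dt\le C_k\H^{k}(Y)$ to a $1$-Lipschitz $\phi\colon Y\to\bbbr$ (so that a.e.\ fibre has zero $\H^{k-1}$-measure, hence dimension $\le k-2$) together with Hurewicz's theorem on dimension-lowering maps. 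From (A) and (B): $\dim X\le n$ (otherwise $\H^{n+1}(X)>0$), and $\H^{n}(X)=0$ forces $\dim X\le n-1$. So the equivalence reduces to \textbf{(C)}: $\H^{n}(X)>0\Rightarrow\dim X\ge n$ (then $\dim X=n$ by (A), and conversely $\dim X=n$ gives $\H^n(X)>0$ by (B)).

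For (C), Kirchheim's area formula gives $\int_\Omega\mathbf{J}f\,dx=\int_X N(f,\Omega,y)\,d\H^n(y)$; since $\{y:N(f,\Omega,y)>0\}=X$ has positive $\H^n$-measure, the right side is positive, so $\mathbf{J}f>0$ --- i.e.\ $\rank\md(f,x)=n$ --- on a set of positive Lebesgue measure. Fix $x_0$ in that set at which $f$ is also metrically differentiable in the strong (two-point) sense, so that $\|\cdot\|:=\md(f,x_0)$ is a genuine norm on $\bbbr^n$. With $m:=\min\{\|u\|:|u|_\infty=1\}>0$, choose a closed cube $\bar Q:=x_0+[-r,r]^n\subset\Omega$ so small that $|d(f(x),f(y))-\|x-y\||<rm$ for all $x,y\in\bar Q$. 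Let $A_i^{\pm}$ be the two faces of $\bar Q$ orthogonal to $e_i$; for $x\in A_i^-$, $y\in A_i^+$ we have $|x-y|_\infty\ge 2r$, hence $\|x-y\|\ge 2rm$ and $d(f(x),f(y))>rm>0$, so $\widetilde A_i^{\pm}:=f(A_i^{\pm})$ are disjoint compact subsets of $X$. These $n$ pairs form an essential family in $X$: if $\widetilde L_i$ is a partition between $\widetilde A_i^-$ and $\widetilde A_i^+$, then $f^{-1}(\widetilde L_i)\cap\bar Q$ is a partition between $A_i^-$ and $A_i^+$ in $\bar Q$, and since the faces of a cube form an essential family $\bigcap_i\bigl(f^{-1}(\widetilde L_i)\cap\bar Q\bigr)\neq\emptyset$; applying $f$ gives a point of $\bigcap_i\widetilde L_i$. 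By the Eilenberg--Otto characterization of covering dimension, $\dim X\ge n$.

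The hard part is (C): this is precisely where openness of $\Omega$ is used, since for a general metric space $\H^n>0$ does \emph{not} force $\dim\ge n$ --- a Cantor-set product $C\times C$ has $\H^2(C\times C)>0$ but topological dimension $0$, and it fails to be a Lipschitz image of an open set in $\bbbr^2$ only because the connected components of such a set map onto connected, hence one-point, subsets of a totally disconnected space. The two inputs that carry (C) are Kirchheim's area formula, which upgrades the measure-theoretic statement $\H^n(X)>0$ to full-rank metric differentiability at an honest point of $\Omega$, and the transfer of the essential family of faces of a cube across the merely continuous surjection $f$ --- for which the only thing needed of $f$ is that it keeps the images of opposite faces of a small cube apart, exactly what rank-$n$ metric differentiability at one point delivers at small scales.
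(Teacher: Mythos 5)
Your proof is correct, and for the substantive implication $\H^n(X)>0\Rightarrow\dim X\geq n$ it takes a genuinely different topological route from the paper. Both arguments share the same measure-theoretic front end: Szpilrajn's theorem handles $\dim X=n\Rightarrow\H^n(X)>0$ and the reduction $\dim X\leq n$, and Kirchheim's area formula upgrades $\H^n(X)>0$ to the existence of a point $x_0$ where $\md(f,x_0)$ is a norm and the strong (two-point) metric differentiability of Lemma~\ref{elm7} holds. Where you diverge is in how that local full-rank information is converted into a lower bound on topological dimension. The paper does not prove Theorem~\ref{T15} directly; it first proves the more general Theorem~\ref{T16} ($\dim X=k\Rightarrow\rank\md(f,x)\leq k$ a.e.) using the Kuratowski embedding, a non-vanishing $(k+1)\times(k+1)$ minor of the componentwise derivative, and the characterization of $\dim X\leq k$ via extendability of $\bbbs^k$-valued maps (Lemma~\ref{T14}), closing the argument with the Brouwer-type Lemma~\ref{T30}. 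You instead work intrinsically with the norm $\md(f,x_0)$ (no embedding into $\ell^\infty$ or projection to $\bbbr^{k+1}$ needed), keep the images of opposite faces of a small cube apart at a definite scale $rm$, pull back any proposed partitions $\widetilde L_i$ in $X$ to partitions in the cube, and invoke the Eilenberg--Otto essential-family characterization of dimension, plus the essentiality of the faces of an $n$-cube. The paper's detour through Theorem~\ref{T16} buys a statement it reuses elsewhere (notably for the implication (a)$\Rightarrow$(b) of Theorem~\ref{TM1}); your argument is more self-contained and avoids the componentwise-derivative machinery, at the cost of proving only the special case $k=n-1$ needed for Theorem~\ref{T15}. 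One small remark on hygiene: the essential-family criterion is usually stated for covering dimension, whereas the paper's $\dim$ is the small inductive dimension — this is harmless since $X=f(\Omega)$ is separable metric, where all three notions coincide, but it is worth saying explicitly.
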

Here $\dim X$ stands for the topological dimension; see Theorem~\ref{T15}.

\begin{theorem}
\label{TM5}
Let $\Phi:X\to Y$ be a map between metric spaces that preserves the length of rectifiable curves i.e.,
$\ell_Y(\Phi\circ\gamma)=\ell_X(\gamma)$
for all rectifiable curves $\gamma:[a,b]\to X$.
Let $f:\Omega\to X$ be a locally Lipschitz map defined on an open set $\Omega\subset\bbbr^n$ for some $n$, and let $\tilde{X}=f(\Omega)$. Then
for any Borel function $g:\tilde{X}\to [0,\infty]$ we have
$$
\int_{\tilde{X}} g(x)\, d\H^n(x)=
\int_{\Phi(\tilde{X})} \Big(\sum_{x\in\Phi^{-1}(y)\cap\tilde{X}}g(x)\Big)\, d\H^n(y).
$$
\end{theorem}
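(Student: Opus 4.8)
The plan is to apply Kirchheim's area formula to the two locally Lipschitz maps $f\colon\Omega\to X$ and $h:=\Phi\circ f\colon\Omega\to Y$, after showing that $h$ has the same metric derivative as $f$ almost everywhere. First, $h$ is locally Lipschitz: if $B\subset\Omega$ is a ball on which $f$ is $L$-Lipschitz and $x_1,x_2\in B$, then $\gamma(t)=f\bigl(x_1+t(x_2-x_1)\bigr)$, $t\in[0,1]$, is an $L|x_1-x_2|$-Lipschitz (hence rectifiable) curve in $X$ — the segment lies in $B$ by convexity — and the hypothesis gives $d_Y(h(x_1),h(x_2))\le\ell_Y(\Phi\circ\gamma)=\ell_X(\gamma)\le L|x_1-x_2|$ (in fact $\Phi\circ\gamma$ is then continuous, since its length over $[s,t]$ equals $\ell_X(\gamma|_{[s,t]})\to0$, so $\ell_Y(\Phi\circ\gamma)$ is a genuine length). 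By Kirchheim's theorem \cite{kir}, $f$ and $h$ are metrically differentiable a.e. To compare the metric derivatives, fix $v\in\bbbr^n$ and a line segment $t\mapsto x_0+tv$ contained in $\Omega$: metric differentiability identifies $\md(f,x_0+tv)(v)$ with the metric speed of $t\mapsto f(x_0+tv)$ for a.e.\ $t$, so $\ell_X\bigl(f\circ\gamma|_{[s_1,s_2]}\bigr)=\int_{s_1}^{s_2}\md(f,x_0+tv)(v)\,dt$ and likewise for $h$; since these lengths coincide for all $[s_1,s_2]$, we get $\md(h,x_0+tv)(v)=\md(f,x_0+tv)(v)$ for a.e.\ $t$, and integrating over all lines parallel to $v$ (Fubini) gives $\md(h,\cdot)(v)=\md(f,\cdot)(v)$ a.e.\ in $\Omega$. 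Letting $v$ range over a countable dense subset of $\bbbr^n$ and using that a seminorm is a Lipschitz function of its argument, we conclude $\md(h,x)=\md(f,x)$ for a.e.\ $x\in\Omega$. In particular the metric Jacobians agree, $\mathbf J_f(x)=\mathbf J_h(x)$ a.e.\ — here $\mathbf J_g(x)$ is the metric Jacobian, a function of the seminorm $\md(g,x)$ that vanishes exactly when that seminorm has rank less than $n$ — and recall that Kirchheim's area formula reads $\int_A u\,\mathbf J_g\,dx=\int_Y\sum_{x\in g^{-1}(y)\cap A}u(x)\,d\H^n(y)$ for measurable $A\subset\Omega$ and Borel $u\colon\Omega\to[0,\infty]$.

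Next, put $\Omega^+=\{x\in\Omega:\mathbf J_f(x)>0\}$, a Borel set (up to a null set) on which $\mathbf J_h>0$ a.e.\ as well; I would construct a Borel set $G\subset\Omega^+$ on which $f$ is injective and with $f(G)=f(\Omega^+)$. At each point of $\Omega^+$ the seminorm $\md(f,\cdot)$ is a norm, so $f$ is bi-Lipschitz, in particular injective, on a ball around that point; covering $\Omega^+$ by countably many such balls and making them disjoint, $\Omega^+=\bigsqcup_i F_i$ with $F_i$ Borel and $f|_{F_i}$ injective. By the Lusin--Souslin theorem each $f(F_i)$ is Borel, hence so is $G:=\bigcup_i\bigl(F_i\setminus f^{-1}\bigl(f(F_1)\cup\cdots\cup f(F_{i-1})\bigr)\bigr)$, and one checks directly that $f|_G$ is injective with $f(G)=f(\Omega^+)$. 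Applying the area formula on $\Omega\setminus\Omega^+$, where both Jacobians vanish a.e., gives $\H^n\bigl(f(\Omega\setminus\Omega^+)\bigr)=\H^n\bigl(h(\Omega\setminus\Omega^+)\bigr)=0$; since $\tilde X\setminus f(\Omega^+)\subset f(\Omega\setminus\Omega^+)$, also $\H^n\bigl(\tilde X\setminus f(\Omega^+)\bigr)=0$.

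Now fix a Borel $g\colon\tilde X\to[0,\infty]$ and set $w:=(g\circ f)\,\mathbf 1_G\colon\Omega\to[0,\infty]$. Since $f|_G$ is injective with image $f(\Omega^+)$, we have $\sum_{x\in f^{-1}(y)}w(x)=g(y)$ for $y\in f(\Omega^+)$ and $=0$ otherwise, so the area formula for $f$ gives
\[
\int_\Omega w\,\mathbf J_f\,dx=\int_{f(\Omega^+)}g\,d\H^n=\int_{\tilde X}g\,d\H^n .
\]
On the other hand $h^{-1}(z)=f^{-1}\bigl(\Phi^{-1}(z)\bigr)$ is the disjoint union of the fibres $f^{-1}(y)$ over $y\in\Phi^{-1}(z)\cap\tilde X$, so $\sum_{x\in h^{-1}(z)}w(x)=\sum_{y\in\Phi^{-1}(z)\cap\tilde X\cap f(\Omega^+)}g(y)$; and since $\Phi\bigl(\tilde X\setminus f(\Omega^+)\bigr)=h\bigl(f^{-1}(\tilde X\setminus f(\Omega^+))\bigr)\subset h(\Omega\setminus\Omega^+)$ is $\H^n$-null, this equals $\sum_{y\in\Phi^{-1}(z)\cap\tilde X}g(y)$ for $\H^n$-a.e.\ $z\in\Phi(\tilde X)$. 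Hence the area formula for $h$ gives
\[
\int_\Omega w\,\mathbf J_h\,dx=\int_{\Phi(\tilde X)}\Bigl(\sum_{y\in\Phi^{-1}(z)\cap\tilde X}g(y)\Bigr)d\H^n(z),
\]
and since $\mathbf J_f=\mathbf J_h$ a.e.\ the two left-hand sides coincide, which is exactly the asserted identity.

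The step I expect to be the real obstacle is the first one: because $\Phi$ is assumed only to preserve curve lengths — not to be Lipschitz, or even continuous, on $X$ — one must pass to the composition $h=\Phi\circ f$ and exploit the convexity of balls in $\Omega$ both to obtain local Lipschitz continuity of $h$ and, via the curve-length/Fubini argument, to identify $\md(h,\cdot)$ with $\md(f,\cdot)$ (and hence $\mathbf J_h$ with $\mathbf J_f$). Everything afterwards is standard measure-theoretic bookkeeping with the area formula, the only mildly delicate point being the construction of the Borel set $G$, which is handled by the Lusin--Souslin theorem.
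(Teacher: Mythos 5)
Your argument for the central identity $\md(\Phi\circ f,x)=\md(f,x)$ a.e.\ is exactly the one the paper uses: express $\md(f,x)(v)$ as the derivative of arc-length along the radial segment (Lemmas~\ref{T19}, \ref{T20}), integrate over lines parallel to $v$ via Fubini, then run through a countable dense set of directions using the Lipschitz dependence of a seminorm on its argument (Lemma~\ref{T21}); this is Theorem~\ref{T18}(a). Where you diverge is the final bookkeeping. The paper sets $G(y)=g(y)/\H^0(f^{-1}(y))$ and pushes it through Kirchheim's area formula twice, while you construct a Borel injectivity section $G\subset\Omega^+$ with $f|_G$ injective and $f(G)=f(\Omega^+)$, and integrate $w=(g\circ f)\chi_G$. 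That alternative is legitimate (and avoids the $0\cdot\infty$ bookkeeping when fibers of $f$ are infinite), but as written it contains a real error.

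The claim ``at each point of $\Omega^+$ the seminorm $\md(f,\cdot)$ is a norm, so $f$ is bi-Lipschitz, in particular injective, on a ball around that point'' is false for Lipschitz (as opposed to $C^1$) maps. Take $f\colon\bbbr^2\to\bbbr^2$, $f(x,y)=(x,|y|)$: this is $1$-Lipschitz and metrically differentiable at the origin with $\md\big(f,(0,0)\big)$ equal to the Euclidean norm, so $(0,0)\in\Omega^+$; yet $f$ is two-to-one on every neighbourhood of the origin. Pointwise nondegeneracy of the metric derivative does not give a bi-Lipschitz ball. What you actually need is Kirchheim's decomposition theorem \cite{kir}: the set where $\md(f,\cdot)$ is a norm can, up to an $\H^n$-null set, be written as a countable union of Borel pieces on each of which $f$ is bi-Lipschitz onto its image. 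Substituting that result for your ball argument produces the Borel sets $F_i$ with $f|_{F_i}$ injective, and the rest of your proof --- the disjointification, the Lusin--Souslin step (after embedding the separable set $\tilde X$ into a Polish space so that the theorem applies), and the two applications of the area formula --- then goes through and yields a correct proof.
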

See Theorem~\ref{T23}, and Theorem~\ref{T18} for a more general statement. Note that the theorem holds for any $n$. The result looks like the area formula under the assumption that the derivative of $\Phi$ is an {\em isometry}. The only problem is that under the assumptions of the theorem the derivative of $\Phi$ is not defined. 

The next result (see also Theorem~\ref{T25})
is a simple consequence of Theorem~\ref{TM5}.
It proves that the Hausdorff measure on a countably rectifiable subset of the Heisenberg group $\bbbh^n$ equals the Lebesgue measure of projections onto $\bbbr^{2n}$, taking into account the multiplicity of the projection. To our surprise, it seems that the result has not been known before.

We say that a subset $E\subset X$ of a metric space is {\em countably $k$-rectifiable}
if there is a family of Lipschitz mappings $f_i:\bbbr^k\supset E_i\to E$
defined on measurable sets $E_i\subset\bbbr^k$ such that
$$
\H^k\Big(E\setminus\bigcup_{i=1}^\infty f_i(E_i)\Big)=0.
$$
Let $\bbbh^n$ be the Heisenberg group (see Section~\ref{Heis}),
$\pi:\bbbh^n\to\bbbr^{2n}$ be the projection onto the first $2n$ coordinates, and let $\H^k_{cc}$ be the Hausdorff measure on $\bbbh^n$ with respect to the Carnot-Carath\'eodory metric.
\begin{theorem}
\label{TM6}
Assume that a set $E\subset\bbbh^n$ is countably $k$-rectifiable for some $k\leq n$. Then for any Borel function 
$g:E\to [0,\infty]$, we have
$$
\int_{E} g(x)\, d\H^k_{cc}(x)=
\int_{\pi(E)}\Big(\sum_{x\in \pi^{-1}(y)\cap E}g(x)\Big)\, d\H^k(y).
$$
\end{theorem}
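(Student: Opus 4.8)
The plan is to deduce Theorem~\ref{TM6} from Theorem~\ref{TM5} (equivalently Theorem~\ref{T18}/Theorem~\ref{T23}) applied to the projection $\pi\colon\bbbh^n\to\bbbr^{2n}$, together with a reduction of ``countably $k$-rectifiable'' to a genuine countable disjoint decomposition of $E$ into Lipschitz images of measurable subsets of $\bbbr^k$. The crucial input is that, for $k\le n$, the horizontal nature of low-dimensional rectifiable sets forces $\pi$ to \emph{preserve lengths} of rectifiable curves lying on $E$, so that the isometry-type hypothesis of Theorem~\ref{TM5} is available.

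First I would recall the relevant features of the Carnot--Carath\'eodory metric on $\bbbh^n$. A curve in $\bbbh^n$ is rectifiable (with respect to $d_{cc}$) if and only if it is horizontal, and for a horizontal curve the $cc$-length equals the Euclidean length of its projection $\pi\circ\gamma$ to $\bbbr^{2n}$; this is because the horizontal distribution is spanned by vector fields whose projections to $\bbbr^{2n}$ form an orthonormal frame, and the $cc$-metric is, by construction, the one making these vector fields orthonormal. Hence $\ell_{\bbbr^{2n}}(\pi\circ\gamma)=\ell_{cc}(\gamma)$ for every rectifiable $\gamma\colon[a,b]\to\bbbh^n$, so $\Phi:=\pi|_{\bbbh^n}\colon(\bbbh^n,d_{cc})\to\bbbr^{2n}$ is a length-preserving map in the sense of Theorem~\ref{TM5}. (The restriction of a length-preserving map to a subspace is again length-preserving, since any rectifiable curve into the subspace is a rectifiable curve into the whole space.)

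Next I would set up the rectifiable structure. Writing $E\subset\bbbh^n$ as countably $k$-rectifiable, we have Lipschitz maps $f_i\colon E_i\to E$, $E_i\subset\bbbr^k$ measurable, with $\H^k_{cc}\big(E\setminus\bigcup_i f_i(E_i)\big)=0$. By replacing $E_i$ by suitable measurable subsets we may arrange that the sets $f_i(E_i)$ are pairwise disjoint and cover $E$ up to an $\H^k_{cc}$-null set $N$; note $k\le n<2n+1=\dim_{\rm top/Hausdorff\ parameter}$, so these are the ``low-dimensional'' Hausdorff measures where $\H^k_{cc}$ behaves like the Euclidean $\H^k$ on horizontal images. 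For each $i$, the map $f_i$ has an open neighbourhood issue only if $E_i$ is not open; but Theorem~\ref{TM5} is stated for locally Lipschitz maps on open subsets of $\bbbr^n$, so I would first extend each $f_i$ to a Lipschitz map $\tilde f_i\colon\Omega_i\to\bbbh^n$ on an open set $\Omega_i\supset E_i$ (using that $\bbbh^n$ with $d_{cc}$ is biLipschitz-embeddable via a Lipschitz retraction argument, or simply extending the Euclidean-coordinate representative and composing — here one must be slightly careful, and the clean route is to invoke the version of Theorem~\ref{TM5}/Theorem~\ref{T18} that already allows measurable domains, which the excerpt advertises as ``a more general statement''). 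Then apply Theorem~\ref{TM5} with $f=\tilde f_i$, $X=(\bbbh^n,d_{cc})$, $Y=\bbbr^{2n}$, $\Phi=\pi$, $\tilde X=\tilde f_i(\Omega_i)\supset f_i(E_i)$, and the Borel function $g\cdot\mathbf 1_{f_i(E_i)}$: this yields
\[
\int_{f_i(E_i)} g\, d\H^k_{cc}
=\int_{\pi(f_i(E_i))}\Big(\sum_{x\in\pi^{-1}(y)\cap f_i(E_i)}g(x)\Big)\, d\H^k(y).
\]

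Finally I would sum over $i$. On the left, $\sum_i\int_{f_i(E_i)}g\,d\H^k_{cc}=\int_E g\,d\H^k_{cc}$ because the $f_i(E_i)$ are disjoint and cover $E$ modulo an $\H^k_{cc}$-null set (and $g\ge 0$, so monotone convergence / countable additivity applies). On the right, by the Tonelli-type interchange for nonnegative terms,
\[
\sum_i\int_{\pi(f_i(E_i))}\sum_{x\in\pi^{-1}(y)\cap f_i(E_i)}g(x)\, d\H^k(y)
=\int_{\bbbr^{2n}}\sum_i\sum_{x\in\pi^{-1}(y)\cap f_i(E_i)}g(x)\, d\H^k(y)
=\int_{\pi(E)}\sum_{x\in\pi^{-1}(y)\cap E}g(x)\, d\H^k(y),
\]
where in the last step we again use that $\bigcup_i f_i(E_i)=E\setminus N$ with $N$ null, so that for $\H^k$-a.e.\ $y$ the inner sum over $\pi^{-1}(y)\cap\bigcup_i f_i(E_i)$ agrees with the sum over $\pi^{-1}(y)\cap E$ — this requires knowing that the exceptional fibres $\pi^{-1}(y)\cap N$ contribute nothing for a.e.\ $y$, which follows from the fact (a consequence of Theorem~\ref{TM5} applied to $N$, or of the coarea/area inequality for the Lipschitz map $\pi$) that $\H^k_{cc}(N)=0$ implies $\int^*\#(\pi^{-1}(y)\cap N)\,d\H^k(y)=0$. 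Combining the two sides gives the claimed identity.

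\textbf{Main obstacle.} The routine measure-theoretic bookkeeping (disjointification, $\sigma$-additivity, Tonelli) is harmless; the one genuinely delicate point is the interface between the ``measurable domain'' hypothesis in the statement of countable rectifiability and the ``open domain, locally Lipschitz'' hypothesis of Theorem~\ref{TM5}. The cleanest fix is to quote the promised more general form (Theorem~\ref{T18}), which presumably already handles Lipschitz maps on measurable sets; absent that, one extends each $f_i$ to an open neighbourhood — possible since $\bbbh^n$ is a complete, indeed Lipschitz-connected, metric space admitting Lipschitz extension (or one simply extends in exponential coordinates and re-metrizes), but this extension step is where care is needed, because the extension's image may meet extra fibres and one must discard that contribution using the $\H^k_{cc}$-null control above. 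The secondary, but purely cited, point is the identity $\ell_{cc}(\gamma)=\ell_{\bbbr^{2n}}(\pi\circ\gamma)$ for horizontal curves, which is standard Heisenberg geometry (Section~\ref{Heis}) and requires no new argument.
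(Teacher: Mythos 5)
Your high-level structure (reduce to a disjoint decomposition $E=\bigsqcup_i f_i(E_i)\cup N$ with $\H^k_{cc}(N)=0$, apply the length-preserving area formula to each piece via the projection $\pi$, then sum using Tonelli) is exactly the paper's route: Theorem~\ref{TM6} is obtained in the paper by summing Theorem~\ref{T25} over the pieces, and Theorem~\ref{T25} is Theorem~\ref{T23} applied to $\pi$. The handling of the residual null set is also fine and simpler than you let on: since $\pi$ is $1$-Lipschitz from $(\bbbh^n,d_{cc})$ to $\bbbr^{2n}$, $\H^k_{cc}(N)=0$ gives $\H^k(\pi(N))=0$ directly, so the extra fibres are over an $\H^k$-null set and contribute nothing.

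However, there is a genuine gap precisely at the step you flag as ``delicate'': passing from Lipschitz maps $f_i\colon E_i\to\bbbh^n$ on measurable sets $E_i\subset\bbbr^k$ to Lipschitz maps on open sets, as required by Theorem~\ref{T23}/\ref{T18}. Both of your proposed fixes fail. Theorem~\ref{T18} is \emph{not} stated for measurable domains --- it too requires an open $\Omega\subset\bbbr^n$ --- so there is no ``more general form'' to invoke. And extending $f_i$ ``in exponential coordinates and re-metrizing'' does not work either: a McShane/Kirszbraun extension that is Lipschitz into Euclidean $\bbbr^{2n+1}$ is in general not Lipschitz into $(\bbbh^n,d_{cc})$ (the identity $(\bbbr^{2n+1},|\cdot|)\to(\bbbh^n,d_{cc})$ is only locally H\"older of exponent $1/2$ in the vertical direction), so the extended map would leave the class to which Theorem~\ref{T23} applies. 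The correct tool is the Wenger--Young Lipschitz extension theorem (Lemma~\ref{elm5}), which says that the pair $(\bbbr^k,\bbbh^n)$ has the Lipschitz extension property when $k\le n$; this is what the paper uses in the proof of Theorem~\ref{T25} to replace $E_i$ by all of $\bbbr^k$, and it is also the \emph{only} place the hypothesis $k\le n$ enters. Your stated rationale for $k\le n$ (``these are the low-dimensional Hausdorff measures where $\H^k_{cc}$ behaves like the Euclidean $\H^k$'') is not the reason; without the extension theorem the argument does not close.
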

There are other new results included in the paper and we follow a convention that new results are denoted as a {\em Theorem} or a {\em Proposition}, while important known results are cited as a {\em Lemma} or a {\em Corollary}.

\subsection*{The paper is structured as follows}
In Section~\ref{Prelim} we recall basic known facts about the Hausdorff measure, rectifiable curves, metric trees, and the Heisenberg groups. The Heisenberg groups are only needed to prove Theorem~\ref{TM6} and the other parts of the paper do not use Heisenberg groups at all, so the readers who are not interested in the Heisenberg groups may skip Subsection~\ref{Heis}.

In Section~\ref{KRT} we carefully state the Kirchheim-Rademacher theorem and the Kirchheim area formula. New results in this section are Propositions~\ref{Tl14} and~\ref{T28}.

In Section~\ref{AFL} we discuss mappings between metric spaces that preserve lengths of rectifiable curves and we prove Theorem~\ref{TM5} (Theorem~\ref{T23}) and Theorem~\ref{TM6} (corollary of Theorem~\ref{T25}), as well as a more general result, Theorem~\ref{T18}. 

In Section~\ref{TD} we discuss applications of metric differentiabity of Lipschitz maps to topological dimension of metric spaces and we prove Theorem~\ref{TM4} (Theorem~\ref{T15}). This result is a consequence of known facts about topological dimension and the following new result (Theorem~\ref{T16}):
\begin{theorem}
\label{TM3}
If $f:\bbbr^n\supset\Omega\to X$ is a Lipschitz map from an open set to a metric space $X$ of topological dimension $\dim X=k$, then $\rank\md(f,x)\leq k$ for almost all $x\in\Omega$.
\end{theorem}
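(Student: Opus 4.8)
The plan is to argue by contradiction using the Kirchheim-Rademacher theorem together with a dimension-lowering property of metric differentiability. Suppose the set $A=\{x\in\Omega:\rank\md(f,x)\geq k+1\}$ has positive Lebesgue measure. At a point $x\in A$ where $f$ is metrically differentiable, the seminorm $\md(f,x)$ on $\bbbr^n$ has a kernel of dimension at most $n-k-1$, so the quotient seminorm descends to a genuine norm on a $(k+1)$-dimensional subspace (equivalently, there is a $(k+1)$-dimensional linear subspace $V_x$ on which $\md(f,x)$ is comparable to the Euclidean norm, with constants that can be quantified). Restricting $f$ to affine planes parallel to $V_x$ and invoking the Kirchheim area formula, one sees that $f$ maps a positive-measure family of $(k+1)$-dimensional pieces to sets of positive $\H^{k+1}$ measure. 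By Fubini, $\H^{k+1}(f(\Omega))>0$, hence $f(\Omega)$ contains a subset of Hausdorff dimension $\geq k+1$, and in particular $X$ has Hausdorff dimension $\geq k+1$.

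To reach the topological-dimension conclusion, I would combine this with the classical fact that a metric space containing a subset of positive $\H^{k+1}$ measure --- more precisely, whose image under a Lipschitz map from $\bbbr^n$ has positive $\H^{k+1}$ measure --- must have topological dimension at least $k+1$. The cleanest route is: a set of positive and finite $\H^{k+1}$ measure has topological dimension $\geq k+1$ is not literally true (think of purely unrectifiable sets), so instead I would use that $f$ restricted to a suitable $(k+1)$-plane is, after the bi-Lipschitz change of coordinates coming from metric differentiability, essentially a bi-Lipschitz embedding of a positive-measure subset of $\bbbr^{k+1}$ into $X$; bi-Lipschitz images of such sets have topological dimension $k+1$, contradicting $\dim X=k$. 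The technical heart is the measurable selection / Lusin-type argument that produces, on a positive-measure set, a single $(k+1)$-dimensional subspace $V$ (or a fixed bi-Lipschitz model) along which the metric derivative is uniformly nondegenerate, so that one honest bi-Lipschitz piece of $\bbbr^{k+1}$ is exhibited inside $X$.

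The main obstacle I anticipate is precisely this passage from the pointwise, $x$-dependent nondegeneracy of $\md(f,x)$ on a $(k+1)$-dimensional subspace to a \emph{global} bi-Lipschitz copy of a chunk of $\bbbr^{k+1}$ sitting in $X$: the subspace $V_x$ and the comparability constants vary with $x$, and metric differentiability only controls $f$ infinitesimally, not on balls of definite size. This is handled by a standard but careful covering argument --- partition $A$ into countably many pieces on which $V_x$ is nearly constant and the comparability constants are pinched between fixed bounds, then on each such piece use the definition of metric differentiability at Lebesgue density points, together with a Vitali-type covering by small cubes on which the infinitesimal estimate is uniform, to extract a set of positive measure on which $f$ is genuinely bi-Lipschitz onto its image when restricted to a fixed $(k+1)$-plane direction. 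Once that bi-Lipschitz piece is in hand, invariance of topological dimension under bi-Lipschitz (indeed homeomorphic) maps forces $\dim X\geq k+1$, contradicting the hypothesis and completing the proof.
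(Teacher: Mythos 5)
Your proposal takes a genuinely different route from the paper, but it has a fatal gap at the final step. You correctly notice that ``positive $\H^{k+1}$ measure implies topological dimension $\geq k+1$'' is false, and you try to repair this by producing a bi-Lipschitz embedding of a positive-measure subset $E\subset\bbbr^{k+1}$ into $X$. But a positive-measure subset of $\bbbr^{k+1}$ can be totally disconnected --- take, for instance, the $(k+1)$-fold product of a fat Cantor set --- and a totally disconnected separable metric space has topological dimension $0$. Bi-Lipschitz maps are homeomorphisms onto their images, so they preserve topological dimension exactly: if $\dim E=0$ then $\dim f(E)=0$, and this gives no contradiction with $\dim X=k$. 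The Lusin-type/density-point argument you outline only yields such a measurable set of positive measure, never an open piece of a $(k+1)$-plane, so the repair does not close the gap: metric differentiability gives infinitesimal control and measure-theoretic control, not the topological ``chunk'' of $\bbbr^{k+1}$ needed.

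The paper sidesteps this entirely by not trying to locate a bi-Lipschitz copy of a high-dimensional set inside $X$. Instead it uses the characterization of topological dimension via extension of sphere-valued maps: $\dim\tilde X\leq k$ iff every continuous $h:C\to\bbbs^k$ on a closed $C\subset\tilde X$ extends continuously over $\tilde X$. Choosing $C=f(S^k(0,r))$ for a small sphere in a $(k+1)$-plane through a good differentiability point, they set up a would-be extension $H$ and show, via the Brouwer-type Lemma~\ref{T30}, that the induced map $g(x)=H(f(rx))$ on $\bar B^{k+1}(0,1)$ would have image covering $\bar B^{k+1}(0,1/2)$ --- impossible since the image lies in $\bbbs^k$. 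This degree-theoretic argument only needs pointwise differentiability of a $(k+1)\times(k+1)$ minor at a single point and a smallness estimate on the sphere; no bi-Lipschitz control or measurable selection is required. If you want to salvage your approach, you must upgrade the positive-measure piece to an open piece (which metric differentiability alone does not give you), or replace the last step by a topological criterion like the extension property, which is essentially what the paper does.
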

The material of Sections~\ref{KRT}, \ref{AFL} and~\ref{TD} prepares us for the proofs of Theorems~\ref{TM1} and~\ref{TM1B}. In fact one of the implications in Theorem~\ref{TM1} is already proved in Section~\ref{TD}. 
Since a metric tree has topological dimension $1$, it follows from Theorem~\ref{TM3} that a Lipschitz map $f$ that factors through a metric tree must satisfy $\rank \md(f,x)\leq 1$ a.e. which is implication (a)$\Rightarrow$(b) in Theorem~\ref{TM1}.

In Section~\ref{FLM} we discuss a well known and  general construction of a factorization of a Lipschitz map $f:X\to Y$ defined on a quasiconvex metric space. 
The new result is Theorem~\ref{T27}. This construction is used in Section~\ref{PTM1} to prove implication (b)$\Rightarrow$(a) of Theorem~\ref{TM1}. 

Thus in Section~\ref{PTM1} we use results from all previous sections and prove the following result which is the equivalence of (a) and (b) in Theorem~\ref{TM1}.
\begin{theorem}
\label{TM7}
If $f:[0,1]^{n}\to X$, $n\geq 1$, is a Lipschitz map into a metric space, then $f$ factors through a metric tree if and only if 
$\rank\md(f,x)\leq 1$ almost everywhere.
\end{theorem}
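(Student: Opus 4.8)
\emph{The case $n=1$ and the easy implication.}
When $n=1$ there is nothing to prove: the interval $[0,1]$ is itself a metric tree, so every Lipschitz map out of it factors through a tree trivially, while every seminorm on $\bbbr$ has rank at most $1$. Assume from now on that $n\ge 2$, and suppose $f=\phi\circ\psi$ with $\psi\colon Q_o\to Z$ and $\phi\colon Z\to X$ Lipschitz and $Z$ a metric tree. A metric tree has topological dimension at most $1$, so Theorem~\ref{TM3}, applied to $\psi$ on the open cube, gives $\rank\md(\psi,x)\le 1$ for a.e.\ $x$. Fix such an $x$ at which also $\md(f,x)$ exists (a.e.\ $x$, by Kirchheim's theorem). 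For every $v\in\bbbr^n$ the Lipschitz estimate $d_X(f(x+tv),f(x))\le\lip(\phi)\,d_Z(\psi(x+tv),\psi(x))$, divided by $t$ and with $t\to 0^+$, gives $\md(f,x)(v)\le\lip(\phi)\,\md(\psi,x)(v)$. Hence $\ker\md(\psi,x)\subseteq\ker\md(f,x)$, and so $\rank\md(f,x)\le\rank\md(\psi,x)\le 1$.

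\emph{The converse: constructing the factorization.}
Assume $\rank\md(f,x)\le 1$ a.e., and put $L=\lip(f)$. Following the general construction of Section~\ref{FLM}, define a pseudometric on $Q_o$ by
$$ \rho(x,y)=\inf_\gamma\, \ell_X(f\circ\gamma), $$
the infimum taken over curves $\gamma$ in $Q_o$ joining $x$ to $y$. As the cube is convex, $\rho(x,y)\le L\,|x-y|$, so the canonical map $\psi\colon Q_o\to Z$ onto the metric space $Z$ associated with the pseudometric $\rho$ is $L$-Lipschitz; since $d_X(f(x),f(y))\le\rho(x,y)$, the formula $\phi([x])=f(x)$ gives a well-defined $1$-Lipschitz map with $f=\phi\circ\psi$; and, $\rho$ being intrinsic and $Q_o$ compact, $Z$ is a compact geodesic space (Theorem~\ref{T27}). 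What remains is to prove that this $Z$ contains no subset homeomorphic to $\bbbs^1$, i.e.\ that it is a metric tree; this is the content of Lemma~\ref{FTT2}, which also supplies the Lipschitz bounds recorded in the Remark after Theorem~\ref{TM1}.

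\emph{Reformulation and the main obstacle.}
For the last step I would first restate the hypothesis via Theorem~\ref{TM1B}: with its ``$n$'' taken to be $2$ and its ``$m$'' to be $n-2\ge 0$, the assumption $\rank\md(f,x)\le 1$ a.e.\ is equivalent to $\H^{2,n-2}_\infty(f,Q_o)=0$ and to $\Theta^{*2}(f,x)=0$ a.e. So it suffices to show: vanishing $(2,n-2)$-mapping content forces the geodesic space $Z$ above to be $0$-hyperbolic (equivalently, to have tripod triangles; equivalently, to contain no embedded circle). This I expect to be the real difficulty. The natural approach is by contradiction: an embedded circle in $Z$ --- or, through a geodesic-bigon argument, a pair of geodesics with disjoint interiors joining two points of $Z$ --- would have to be assembled by $\psi$ out of competitor paths in $Q_o$; but for a Lipschitz image of a cube to surround a loop it must sweep out a genuinely two-dimensional region, and one should be able to show that this forces a uniform positive lower bound for $\sum_i\H^2_\infty(f(Q_i))(\diam Q_i)^{n-2}$ over all dyadic covers of some fixed sub-cube of $Q_o$, contradicting $\H^{2,n-2}_\infty(f,Q_o)=0$. (Alternatively, one could try to verify the four-point $0$-hyperbolicity inequality for $\rho$ directly.) The delicate points --- that $\psi$ need not lift paths, that $\H^2_\infty$ is only finitely subadditive, and that one must keep track of the dyadic scales at which the two-dimensional content of $f(Q_o)$ appears --- are exactly where the hypothesis is consumed, and make up the technical heart of Section~\ref{PTM1}.
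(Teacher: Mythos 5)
Your ``only if'' direction is correct and is essentially the paper's argument: factor $f=\phi\circ\psi$, apply the rank bound for Lipschitz maps into a tree (Theorem~\ref{T16}, recorded as Theorem~\ref{T17}) to $\psi$, and then use the chain-rule inequality $\md(\phi\circ\psi,x)\le\lip(\phi)\,\md(\psi,x)$ (Lemma~\ref{T32}). The construction of $Z=Z_f$ via the intrinsic pseudometric, together with the Lipschitz bounds on $\psi$ and $\phi$ and the fact that $Z$ is a compact geodesic space, also matches Section~\ref{FLM} (Lemma~\ref{FTT2} and Corollary~\ref{FTT5}). One small slip: you attribute ``$Z$ contains no embedded $\bbbs^1$'' to Lemma~\ref{FTT2}, but that lemma only records the Lipschitz constants of $\psi$ and $\phi$; nothing in Section~\ref{FLM} asserts that $Z$ is a tree.

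The genuine gap is in the ``if'' direction: you explicitly defer, and do not supply, the proof that the geodesic space $Z$ has no embedded circle. Your heuristic --- a loop in $Z$ would force $\psi$ to sweep out a two-dimensional region of positive $\H^{2,n-2}_\infty$-content, contradicting (e') of Theorem~\ref{TM1B} --- is a reasonable guess but is not what the paper does, and the devil is in the details you name (non-liftability of paths, only finite subadditivity of $\H^2_\infty$, tracking dyadic scales). The paper instead never invokes the mapping content here: it verifies characterization~(g) of Lemma~\ref{T33}, namely $\int_{\bbbs^1}(\pi\circ\alpha)^*(x\,dy)=0$ for every Lipschitz $\alpha\colon\bbbs^1\to Z$ and $\pi\colon Z\to\bbbr^2$. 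For $\alpha=\psi\circ\gamma$ with $\gamma\colon\bbbs^1\to Q_o$ and a Lipschitz extension $g\colon\bar{\bbbb}^2\to Q_o$ of $\gamma$ (adjusted to map the open disc into the open cube), Lemma~\ref{tt1} turns the circle integral into $\int_{\bbbb^2}\det D(\pi\circ\psi\circ g)$. The key rank estimate $\rank D(\pi\circ\psi\circ g)\le\rank\md(f\circ g)\le 1$ a.e.\ then follows from Theorem~\ref{T27}(a) --- which says $\md(\psi\circ f,x)=\md(f,x)$, a consequence of the length-preservation of $\phi$ --- combined with the nontrivial Proposition~\ref{T28}, which propagates the bound $\rank\md(f,\cdot)\le 1$ through the possibly-null image of $g$. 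This makes the determinant vanish. Finally, general $\alpha$ are handled by approximating with $\psi\circ\gamma_j$ via Lemma~\ref{T34} and passing to the limit in the area integral via Lemma~\ref{ma1}. Without these ingredients (especially Proposition~\ref{T28}, Theorem~\ref{T27}(a), and the approximation lemma for curves in $Z$), the step you label as ``the real difficulty'' remains unproved.
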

Finally in Section~\ref{PTM3} we prove Theorem~\ref{TM1B} which along with Theorem~\ref{TM7} completes the proof of Theorem~\ref{TM1}.

\subsection*{Notation} Notation is quite standard. We denote by $\H^s$ and $\H^s_\infty$ the Hausdorff measure and the Hausdorff content respectively. We normalize $\H^s$ so that for $s=n$ it coincides with the Lebesgue measure on $\bbbr^n$, and we use $\H^n$ to denote the Lebesgue measure in $\bbbr^n$.  
The volume of the unit ball in $\bbbr^n$ is denoted by $\omega_n$.
By a null set we mean a set of measure zero.
The integral average of $f$ is denoted by the barred integral $\mvint_A f\, d\mu:=\mu^{-1}(A)\int_A f\, d\mu$.
Open and closed balls are denoted by
$B(x,r)$ and $\bar{B}(x,r)$. The unit ball and the unit sphere (centered at $0$) in $\bbbr^n$ will be denoted by $\bbbb^{n}$ and $\bbbs^{n-1}$. The characteristic function of a set $A$ is denoted by $\chi_A$. The (small inductive) topological dimension of $X$ is denoted by $\dim X$. We write $A\lesssim_{n,m}B$ if there is a constant $C>0$ that depends on $n$ and $m$ only, such that $A\leq CB$.

\subsection*{Acknowledgement} The authors would like to thank the referee for valuable comments.

\section{Preliminaries}
\label{Prelim}

\subsection{Hausdorff measure, Hausdorff content, coarea formula.}
\label{HHC}
In this section we briefly recall basic definitions and facts regarding the Hausdorff measure. For more details and proofs, see e.g., \cite{EG,federer,mattila,simon}.

Let $\omega_s=\pi^{s/2}/\Gamma(\frac{s}{2}+1)$, so $\omega_n$ is the volume of the unit ball in $\bbbr^n$ when $n$ is a positive integer. The {\em Hausdorff measure} $\H^s(E)$, $0\leq s<\infty$, of a set $E$ in a metric space is defined as $\H^s(E)=\lim_{\delta\to 0^+}\H^s_\delta(E)$, where 
\begin{equation}
\label{eq17}
\H^s_\delta(E) = \inf\sum_i\frac{\omega_s}{2^s}(\diam A_i)^s,
\qquad
0<\delta\leq\infty,
\end{equation}
and the infimum is taken over all coverings $E\subset\bigcup_i A_i$ by sets with finite diameter bounded by $\delta$, $\diam A_i\leq \delta$. Note that $\H^s_\infty$, called the {\em Hausdorff content} is defined as infimum of the sums \eqref{eq17}, where we place no restriction on the finite diameters of the sets $A_i$ covering the set $E$. The Hausdorff content is convenient when proving that a set has Hausdorff measure zero, since $\H^s(E)=0$ if and only if $\H^s_\infty(E)=0$. Also, observe that $\H^0(E)$ equals the cardinality of the set $E$.

The Hausdorff measure is an outer measure defined on all subsets of $X$ and all Borel sets are $\H^s$-measurable. $\H^s$ is {\em Borel-regular} in the following sense (see for example \cite[Lemma~2.10]{HajlaszE}).
\begin{lemma}
\label{T35}
For $s\in [0,\infty)$ and every set $E\subset X$, there is a Borel set $\tilde{E}$ such that $E\subset\tilde{E}$ and $\H^s(E)=\H^s(\tilde{E})$.
\end{lemma}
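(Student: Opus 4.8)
The plan is to split on whether $\H^s(E)$ is finite, and in the interesting case to trap $E$ between itself and a Borel set at each dyadic scale $1/j$, then intersect these Borel sets. If $\H^s(E)=\infty$ there is nothing to prove: the whole space $X$ is a Borel subset of itself, it contains $E$, and monotonicity of the outer measure $\H^s$ forces $\H^s(X)=\infty=\H^s(E)$. So from now on I assume $\H^s(E)<\infty$, which in particular makes $\H^s_{1/j}(E)\le\H^s(E)$ finite for every $j$, so the relevant infima are attainable up to an error.

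Next I would, for each $j\in\bbbn$, invoke the definition of $\H^s_{1/j}(E)$ as an infimum over countable coverings of $E$ by sets of diameter at most $1/j$ to pick one such covering $E\subset\bigcup_i A^{(j)}_i$ with
$$
\sum_i\frac{\omega_s}{2^s}\big(\diam A^{(j)}_i\big)^s\le \H^s_{1/j}(E)+\frac1j .
$$
Replacing each $A^{(j)}_i$ by its closure changes neither its diameter nor the fact that the family covers $E$, and produces closed sets. Hence $F_j:=\bigcup_i\overline{A^{(j)}_i}$ is an $F_\sigma$ set, in particular Borel, with $E\subset F_j$, and testing the definition of $\H^s_{1/j}(F_j)$ against the covering $\{\overline{A^{(j)}_i}\}_i$ (whose members still have diameter at most $1/j$) yields $\H^s_{1/j}(F_j)\le\H^s_{1/j}(E)+1/j$. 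I would then set $\tilde E:=\bigcap_{j\in\bbbn}F_j$, which is Borel and satisfies $E\subset\tilde E$.

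It remains to check $\H^s(\tilde E)=\H^s(E)$. One inequality, $\H^s(E)\le\H^s(\tilde E)$, is immediate from $E\subset\tilde E$ and monotonicity. For the reverse, each set function $\H^s_{1/j}$ is monotone, so for every $j$,
$$
\H^s_{1/j}(\tilde E)\le\H^s_{1/j}(F_j)\le\H^s_{1/j}(E)+\frac1j\le\H^s(E)+\frac1j ,
$$
using $\H^s_{1/j}(E)\le\sup_{\delta>0}\H^s_\delta(E)=\H^s(E)$ in the last step; since $\H^s_\delta$ increases as $\delta\to0^+$ we have $\H^s(\tilde E)=\sup_j\H^s_{1/j}(\tilde E)$, and letting $j\to\infty$ in the displayed inequality gives $\H^s(\tilde E)\le\H^s(E)$. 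The only point requiring a moment's attention — and the reason the statement is not wholly automatic — is the two-scale bookkeeping: one cannot work with a single covering, because it cannot simultaneously be arbitrarily fine and nearly optimal for the limiting measure, so one approximates at each scale $1/j$ separately and intersects afterward; passing to closures to turn covering sets into Borel sets without disturbing diameters is the other, entirely routine, ingredient. The borderline value $s=0$ is covered by the same argument, and is in fact easier, since then $\H^0$ is the counting measure, so a set of finite measure is finite, hence closed and already Borel.
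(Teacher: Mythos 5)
Your proof is correct and is the standard textbook argument for Borel regularity of Hausdorff measure: approximate at each scale $1/j$ by a near-optimal covering, replace covering sets by their closures (which preserves diameters), take the resulting $F_\sigma$ set $F_j$, and intersect over $j$. The paper does not give its own proof but cites \cite[Lemma~2.10]{HajlaszE}, which proceeds along the same lines, so the two approaches agree.
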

We will also need \cite[Theorem~2.6]{simon}.
\begin{lemma}
\label{T36}
$\H^n=\H^n_\infty$ on all subsets of $\bbbr^n$, and $\H^n=\H^n_\infty=\mathcal{L}^n$ on all Lebesgue measurable sets in $\bbbr^n$.
\end{lemma}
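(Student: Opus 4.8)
The plan is to establish, for \emph{every} subset $E\subset\bbbr^n$, the chain of inequalities
\[
\mathcal{L}^n(E)\ \le\ \H^n_\infty(E)\ \le\ \H^n(E)\ \le\ \mathcal{L}^n(E),
\]
where here $\mathcal{L}^n$ denotes Lebesgue \emph{outer} measure on $\bbbr^n$ (which coincides with Lebesgue measure on Lebesgue measurable sets). Once the chain is in place, all three quantities must agree, and specializing to measurable $E$ gives the second assertion of the lemma. The middle inequality is free: $\H^n_\infty$ is an infimum over a strictly larger family of coverings than $\H^n_\delta$, so $\H^n_\infty(E)\le\H^n_\delta(E)$ for every $\delta$, hence $\H^n_\infty(E)\le\H^n(E)$. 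It therefore remains to prove the outer two inequalities.

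For the lower bound I would show the sharper statement $\mathcal{L}^n(E)\le\H^n_\delta(E)$ for every $\delta\in(0,\infty]$, which at once covers both $\H^n_\infty$ and $\H^n$. Starting from an arbitrary cover $E\subset\bigcup_iA_i$ with $\diam A_i\le\delta$, I replace each $A_i$ by its closure $\bar{A_i}$, which has the same diameter and is Borel, hence $\mathcal{L}^n$-measurable; then by countable subadditivity of the outer measure and the \emph{isodiametric inequality} $\mathcal{L}^n(\bar{A_i})\le\omega_n(\diam A_i/2)^n=\frac{\omega_n}{2^n}(\diam A_i)^n$ one gets $\mathcal{L}^n(E)\le\sum_i\frac{\omega_n}{2^n}(\diam A_i)^n$. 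Taking the infimum over all such coverings finishes this half. The isodiametric inequality — that among sets of prescribed diameter the ball maximizes volume — is the single nontrivial ingredient; its classical proof via Steiner symmetrization can simply be quoted.

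For the upper bound $\H^n(E)\le\mathcal{L}^n(E)$, fix $\delta>0$ and $\eps>0$. First I would record that any Lebesgue null set $N$ has $\H^n_\delta(N)=0$: enclose $N$ in an open set of measure $<\eps$, subdivide it into dyadic cubes $Q$ of side less than $\delta/\sqrt n$, and bound each term by $\frac{\omega_n}{2^n}(\diam Q)^n=\frac{\omega_n n^{n/2}}{2^n}\mathcal{L}^n(Q)$, so the total weight is $\lesssim_n\eps$. Next I choose an open set $U\supset E$ with $\mathcal{L}^n(U)\le\mathcal{L}^n(E)+\eps$ and invoke the Vitali covering theorem to extract pairwise disjoint closed balls $B_i\subset U$ of diameter at most $\delta$ with $\mathcal{L}^n\big(U\setminus\bigcup_iB_i\big)=0$. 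The decisive point is the \emph{exact} identity $\frac{\omega_n}{2^n}(\diam B_i)^n=\mathcal{L}^n(B_i)$ for closed balls — precisely what the normalization constant $\omega_s/2^s$ in \eqref{eq17} is designed to make true. Covering the residual null set $U\setminus\bigcup_iB_i$ by sets of total $\H^n_\delta$-weight $<\eps$ (possible by the previous observation), the balls $B_i$ together with that residual cover witness
\[
\H^n_\delta(E)\ \le\ \sum_i\mathcal{L}^n(B_i)+\eps\ \le\ \mathcal{L}^n(U)+\eps\ \le\ \mathcal{L}^n(E)+2\eps .
\]
Letting $\eps\to0$ and then $\delta\to0$ gives $\H^n(E)\le\mathcal{L}^n(E)$.

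Putting the three inequalities together yields $\mathcal{L}^n(E)=\H^n_\infty(E)=\H^n(E)$ for all $E\subset\bbbr^n$; on measurable $E$ the left side is genuine Lebesgue measure. The main obstacle is really just the isodiametric inequality — everything else is countable subadditivity of the outer measure, the Vitali covering theorem, and bookkeeping that the $\omega_s/2^s$ normalization renders sharp on balls.
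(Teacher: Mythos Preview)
Your argument is correct and is the standard proof: the isodiametric inequality gives $\mathcal{L}^n\le\H^n_\infty$, monotonicity in $\delta$ gives $\H^n_\infty\le\H^n$, and the Vitali covering of an open superset by disjoint closed balls (together with the observation that Lebesgue null sets have zero $\H^n_\delta$-weight, handled by a cube decomposition) gives $\H^n\le\mathcal{L}^n$. The bookkeeping with the residual null set is fine and not circular, since you dispose of null sets by a direct cube covering before invoking Vitali.

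As for comparison with the paper: there is nothing to compare. The paper does not prove Lemma~\ref{T36}; it simply records it as a known fact with a citation to \cite[Theorem~2.6]{simon}. Your proof is essentially the one found in Simon (and in most standard references), so you have supplied exactly what the paper chose to outsource.
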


If $(X,\mu)$ is a measure space, and $f:X\to [0,\infty]$ is defined $\mu$-a.e., but  is not necessarily measurable, then its {\em upper integral} is defined as
$$
\int_X^*f\, d\mu=\inf\int_X\phi\, d\mu,
$$
where the infimum is taken over all $\mu$-measurable functions $\phi$ satisfying $0\leq f(x)\leq\phi(x)$ for $\mu$-a.e. $x\in X$.
It is important to note that if $\int_X^*f\,d\mu=0$, then $f=0$ $\mu$-a.e. (and hence $f$ is measurable).

The next result is the classical {\em coarea formula} due to Federer, see \cite{EG,federer}.
\begin{lemma} 
\label{T31}
If $f:\bbbr^{n+m}\to\bbbr^n$, $m\geq 0$, is Lipschitz and $E\subset\bbbr^{n+m}$ is measurable, then 
$$
\int_E |J_f(x)|\, d\H^{n+m}(x)=
\int_{\bbbr^n}\H^m(f^{-1}(y)\cap E)\, d\H^n(y),
\text{ where $|J_f(x)|=\sqrt{\det (Df)(Df)^T}$.}
$$
\end{lemma}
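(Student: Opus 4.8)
The plan is to follow Federer's classical argument: split the domain according to whether the Jacobian $J_f$ vanishes, and treat the two pieces by unrelated mechanisms. Throughout, $C$ denotes a constant depending only on $n$, $m$, and $\lip f$, not necessarily the same at each occurrence.

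\emph{Reductions and the linear model.} Both sides are monotone and countably additive in $E$ and vanish on $\H^{n+m}$-null sets (for the right side because $\int_{\bbbr^n}^*\H^m(N\cap f^{-1}(y))\,d\H^n(y)\lesssim_{n,m}(\lip f)^n\,\H^{n+m}(N)$ by Eilenberg's inequality), and once one knows that $y\mapsto\H^m(f^{-1}(y)\cap E)$ is $\H^n$-measurable --- which one checks first for compact $E$, where the fibre varies upper semicontinuously in $y$ for the Hausdorff metric and $\H^m_\delta$ is upper semicontinuous on compacta, and then in general by a monotone-class argument --- it suffices to prove the identity for bounded Borel $E$ with $f$ globally Lipschitz; by Rademacher we may also assume $f$ is differentiable everywhere on $E$. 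For $f$ equal to a linear map $L$ one writes $L=O\Sigma R$ with $O,R$ orthogonal and $\Sigma(x)=(\sigma_1x_1,\dots,\sigma_nx_n)$, reduces to $L=\Sigma$ since orthogonal maps preserve $\H^k$ and Jacobians, and then Fubini on $\bbbr^n\times\bbbr^m$ together with the substitution $y_i=\sigma_i x_i$ gives $\int_{\bbbr^n}\H^m(L^{-1}(y)\cap E)\,d\H^n(y)=(\prod_i\sigma_i)\,\H^{n+m}(E)=\int_E|J_L|\,d\H^{n+m}$ when all $\sigma_i>0$, while if $L$ is not onto then $|J_L|=0$ and $\im L$ is $\H^n$-null, so both sides vanish.

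\emph{The critical set $\{J_f=0\}$.} Here the left integrand is $0$, so it suffices to prove $\int_{\bbbr^n}^*\H^m(f^{-1}(y)\cap E\cap\{J_f=0\})\,d\H^n(y)=0$. For $\eps\in(0,1]$ set $g_\eps(x)=(f(x),\eps x)\colon\bbbr^{n+m}\to\bbbr^n\times\bbbr^{n+m}$; it is injective, $f=p\circ g_\eps$ for the $1$-Lipschitz projection $p$ onto $\bbbr^n$, and on each fibre $f^{-1}(y)$ it is the $\eps$-dilation $x\mapsto(y,\eps x)$, so $\H^m(g_\eps(f^{-1}(y)\cap E\cap\{J_f=0\}))=\eps^m\,\H^m(f^{-1}(y)\cap E\cap\{J_f=0\})$. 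On $\{J_f=0\}$ the kernel of $Df$ has dimension at least $m+1$, hence $(Dg_\eps)^TDg_\eps=(Df)^TDf+\eps^2 I$ has determinant $\le C\eps^{2(m+1)}$, so $J_{g_\eps}\le C\eps^{m+1}$ there. Combining the dilation identity, Eilenberg's inequality for $p$, and the area formula for $g_\eps$,
\[
\eps^m\int_{\bbbr^n}^{*}\H^m\big(f^{-1}(y)\cap E\cap\{J_f=0\}\big)\,d\H^n(y)\;\le\;C\,\H^{n+m}\big(g_\eps(E\cap\{J_f=0\})\big)\;=\;C\!\!\int_{E\cap\{J_f=0\}}\!\!\!\!J_{g_\eps}\,d\H^{n+m}\;\le\;C\eps^{m+1}\H^{n+m}(E),
\]
and dividing by $\eps^m$ and letting $\eps\to0$ removes the critical set.

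\emph{The regular set $\{J_f>0\}$.} Fix $\eta>0$. Since $x\mapsto Df(x)$ is measurable and of rank $n$ a.e.\ on $\{J_f>0\}$, a Lusin--Egorov argument writes $\{J_f>0\}$, modulo a null set, as a countable disjoint union of bounded Borel sets $E_k$ on each of which $Df$ stays within $\eta$ of a fixed rank-$n$ linear map $L_k$; shrinking $E_k$ so that the first-order expansion of $f$ is uniform, one obtains on $E_k$ the comparisons $(1+C\eta)^{-1}|J_{L_k}|\le J_f\le(1+C\eta)|J_{L_k}|$, the pseudometric estimate $(1+C\eta)^{-1}|L_k(x-x')|\le|f(x)-f(x')|\le(1+C\eta)|L_k(x-x')|$ for $x,x'\in E_k$, and hence $(1+C\eta)^{-m}\H^m(L_k^{-1}(y)\cap E_k)\le\H^m(f^{-1}(y)\cap E_k)\le(1+C\eta)^m\H^m(L_k^{-1}(y)\cap E_k)$ for $\H^n$-a.e.\ $y$. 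Applying the linear identity on each $E_k$, summing, and using that deleting an $\H^{n+m}$-null set from the domain does not change the fibre measures for a.e.\ $y$ (again by Eilenberg's inequality, now for $f$ itself), we get
\[
(1+C\eta)^{-m-1}\!\!\int_{\{J_f>0\}}\!\!\!\!J_f\,d\H^{n+m}\;\le\;\int_{\bbbr^n}\H^m\big(f^{-1}(y)\cap\{J_f>0\}\big)\,d\H^n(y)\;\le\;(1+C\eta)^{m+1}\!\!\int_{\{J_f>0\}}\!\!\!\!J_f\,d\H^{n+m},
\]
and $\eta\to0$ gives equality there. Together with the previous step this proves the formula. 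The genuinely non-formal point is exactly this last step: passing from ``$Df$ close to $L_k$'' to the comparison of the \emph{fibres} $f^{-1}(y)\cap E_k$ with the affine slices $L_k^{-1}(y)\cap E_k$ at the level of $\H^m$ amounts to controlling how a Lipschitz map distorts $m$-dimensional measure along its level sets, and this is where the real work lies; by contrast the critical-set step reduces entirely to the classical area formula and Eilenberg's inequality, both of which may be taken as known.
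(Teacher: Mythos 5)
The paper does not prove this lemma; it cites Federer's coarea formula as a classical result (Federer; Evans--Gariepy), so there is no internal argument to compare against. Your reductions, the linear model, and the treatment of the critical set via $g_\eps(x)=(f(x),\eps x)$ together with Eilenberg's inequality and the area formula are all correct and standard.

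The regular-set step, however, has a genuine gap, and it sits exactly at the point you flag as the crux. The claimed ``pseudometric estimate''
$$
(1+C\eta)^{-1}\,|L_k(x-x')|\;\le\;|f(x)-f(x')|\;\le\;(1+C\eta)\,|L_k(x-x')|
\qquad (x,x'\in E_k)
$$
is false. From $|Df-L_k|\le\eta$ on $E_k$ and a uniform first-order expansion one obtains only the \emph{additive} bound $\big||f(x)-f(x')|-|L_k(x-x')|\big|\le C\eta\,|x-x'|$, and since $L_k$ has an $m$-dimensional kernel, $|x-x'|$ is not controlled by $|L_k(x-x')|$: for $x-x'$ near $\ker L_k$ the right-hand side of your claimed upper bound is negligible while the left-hand side can be of order $\eta|x-x'|$, so no Lusin--Egorov shrinking of $E_k$ can rescue the multiplicative comparison. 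Moreover, even granting such an estimate, the inference from it to $(1+C\eta)^{-m}\H^m(L_k^{-1}(y)\cap E_k)\le\H^m(f^{-1}(y)\cap E_k)\le(1+C\eta)^m\H^m(L_k^{-1}(y)\cap E_k)$ \emph{at the same level $y$} is unjustified: at best it would identify the level sets of $f|_{E_k}$ with those of $L_k|_{E_k}$ as sets but with a bi-Lipschitz reparametrization of the level variable, contributing an extra Jacobian factor that your accounting omits. The standard repair is to replace the direct comparison of $f$ with $L_k$ by the auxiliary bi-Lipschitz map $h(x)=(f(x),Px)\in\bbbr^n\times\bbbr^m$, where $P$ is the orthogonal projection onto $\ker L_k$. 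Then $Dh=(Df,P)$ is $\eta$-close on $E_k$ to the linear isomorphism $(L_k,P)$, so $h|_{E_k}$ is $(1+C\eta)$-bi-Lipschitz onto its image, it carries $f^{-1}(y)\cap E_k$ onto the affine slice $h(E_k)\cap(\{y\}\times\bbbr^m)$, Fubini gives $\int_{\bbbr^n}\H^m\big(h(E_k)\cap(\{y\}\times\bbbr^m)\big)\,d\H^n(y)=\H^{n+m}(h(E_k))$, the area formula evaluates $\H^{n+m}(h(E_k))=\int_{E_k}|\det Dh|$, and $|\det Dh|$ is $(1+C\eta)$-comparable to $J_f$ on $E_k$ (after further subdividing so that $J_{L_k}$ is bounded away from $0$ on each piece). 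Summing over $k$ and letting $\eta\to0$ then closes the argument; the version you wrote does not.
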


\subsection{Rectifiable curves}
A {\em curve} in a metric space $(X,d)$ is a continuous map $\gamma:[a,b]\to X$.
The {\em length} of $\gamma$ is defined as 
$$
\ell(\gamma)=\sup\left\{\sum_{i=0}^{n-1} d(\gamma(t_i),\gamma(t_{i+1}))\right\}, 
$$
where the supremum is over all $n\in\bbbn$ and all partitions $a=t_0<t_1<\ldots<t_n=b$. A curve is \textit{rectifiable} if $\ell(\gamma)<\infty$.
We will also use notation $\ell_X(\gamma)$.
For more information about rectifiable curves in metric spaces, see e.g. \cite{hajlasz1}.

Every rectifiable curve can be reparametrized as a Lipschtz curve \cite[Theorem~3.2]{hajlasz1} so without loss of generality we may assume that rectifiable curves are Lipschitz continuous.

A {\em length space} is a metric space such that the distance between any two points equals the infimum of lengths of curves connecting these two points and the space is a {\em geodesic space} if for any two points, there is a curve that connects these points and whose length equals the distance between the two points. Clearly, any geodesic space is a length space.
A shortest curve connecting given two points (if it exists) is called a {\em geodesic}. 

A metric space is {\em proper} if bounded and closed sets are compact. Proper spaces are also known as {\em boundedly compact} spaces.
The following fact is well know \cite[Theorem~3.9]{hajlasz1}.
\begin{lemma}
\label{T1}
If a metric space $X$ is proper, and $x,y\in X$ are two points which can be connected by a rectifiable curve, then there is a shortest curve connecting $x$ to $y$.
\end{lemma}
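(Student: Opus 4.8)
The plan is to use the direct method of the calculus of variations: take a length-minimizing sequence of curves joining $x$ to $y$, extract a uniformly convergent subsequence via the Arzel\`a--Ascoli theorem (this is where properness of $X$ is used), and conclude that the limit curve is a geodesic by lower semicontinuity of length.

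First I would set
$$
L=\inf\{\ell(\sigma)\colon \sigma\colon[0,1]\to X \text{ is a curve with }\sigma(0)=x,\ \sigma(1)=y\},
$$
which is finite because, by hypothesis, $x$ and $y$ can be joined by a rectifiable curve. Choose curves $\sigma_k$ joining $x$ to $y$ with $\ell(\sigma_k)\to L$; discarding finitely many terms we may assume $\ell(\sigma_k)\le L+1$ for all $k$. As recalled above, every rectifiable curve admits a Lipschitz reparametrization; reparametrizing $\sigma_k$ with constant speed on $[0,1]$ we obtain a curve $\gamma_k\colon[0,1]\to X$ that is $\ell(\sigma_k)$-Lipschitz, hence $(L+1)$-Lipschitz, with $\gamma_k(0)=x$, $\gamma_k(1)=y$, and $\ell(\gamma_k)=\ell(\sigma_k)\to L$. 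Since every point of $\gamma_k([0,1])$ lies within distance $\ell(\gamma_k)\le L+1$ of $x$, all the curves $\gamma_k$ take values in the closed ball $\bar{B}(x,L+1)$, which is compact because $X$ is proper. The family $(\gamma_k)$ is equicontinuous (uniformly Lipschitz) and takes values in a fixed compact set, so the Arzel\`a--Ascoli theorem yields a subsequence, still denoted $(\gamma_k)$, converging uniformly to a curve $\gamma\colon[0,1]\to X$ with $\gamma(0)=x$ and $\gamma(1)=y$.

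Finally I would invoke lower semicontinuity of length under pointwise convergence: for any partition $0=t_0<\dots<t_N=1$, continuity of the distance function gives
$$
\sum_{i=0}^{N-1} d(\gamma(t_i),\gamma(t_{i+1}))=\lim_{k\to\infty}\sum_{i=0}^{N-1} d(\gamma_k(t_i),\gamma_k(t_{i+1}))\le\liminf_{k\to\infty}\ell(\gamma_k)=L,
$$
and taking the supremum over all partitions yields $\ell(\gamma)\le L$. Since $\gamma$ joins $x$ to $y$, the reverse inequality $\ell(\gamma)\ge L$ holds by the definition of $L$, so $\ell(\gamma)=L$ and $\gamma$ is a shortest curve connecting $x$ to $y$. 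The only point that requires a little care is the constant-speed reparametrization, which guarantees that the $\gamma_k$ are \emph{uniformly} Lipschitz so that Arzel\`a--Ascoli applies; the essential hypothesis of properness enters solely through the compactness of $\bar{B}(x,L+1)$, and the remaining steps are routine.
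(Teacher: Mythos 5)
Your proof is correct and is the standard direct-method argument (constant-speed reparametrization, Arzel\`a--Ascoli via properness, lower semicontinuity of length); the paper does not supply its own proof but cites \cite[Theorem~3.9]{hajlasz1}, and your argument is essentially the one given there.
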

\begin{corollary}
\label{T2}
Any proper length space is geodesic. In particular compact length spaces are geodesic.
\end{corollary}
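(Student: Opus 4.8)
The plan is to obtain this as an essentially immediate consequence of Lemma~\ref{T1}. Let $X$ be a proper length space and fix two points $x,y\in X$. Since $X$ is a length space, $d(x,y)$ is the infimum of the lengths $\ell(\gamma)$ over all curves $\gamma$ joining $x$ to $y$, and as $d(x,y)<\infty$ this infimum is finite; picking curves $\gamma_k$ with $\ell(\gamma_k)\to d(x,y)$ we see that for large $k$ the curve $\gamma_k$ has finite length, so $x$ and $y$ can be connected by a rectifiable curve.

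Now I would invoke Lemma~\ref{T1}: because $X$ is proper and $x,y$ are joined by a rectifiable curve, there is a shortest curve $\gamma_0$ joining $x$ to $y$, i.e. $\ell(\gamma_0)\le\ell(\sigma)$ for every curve $\sigma$ connecting these two points. Hence $\ell(\gamma_0)$ equals the infimum of lengths of curves joining $x$ to $y$, which by the length-space hypothesis is exactly $d(x,y)$. Thus any two points are joined by a curve whose length equals their distance, so $X$ is geodesic. For the final assertion, a compact metric space is automatically proper, since a closed and bounded subset is a closed subset of the compact space $X$ and is therefore compact; hence a compact length space is a proper length space and the first part applies.

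There is no genuine obstacle here; the only point that requires care is the step from ``shortest curve'' in Lemma~\ref{T1} (minimal length among connecting curves) to ``length equals the distance''. This is precisely where the length-space hypothesis is used rather than mere properness: on a general proper metric space a shortest curve between two points need not realize their distance, and the conclusion would fail.
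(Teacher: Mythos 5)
Your proof is correct and follows the same route the paper intends: it is an immediate consequence of Lemma~\ref{T1}, using the length-space hypothesis both to produce a rectifiable connecting curve and to identify the shortest curve's length with the distance, together with the observation that compact spaces are proper.
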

The {\em speed} of a Lipschitz curve $\gamma:[a,b]\to X$ is defined as
$$
|\dot{\gamma}|(t)=\lim_{h\to 0}\frac{d(\gamma(t+h),\gamma(t))}{|h|}\, .
$$
The next result is well known, see e.g., \cite[Theorem~3.6]{hajlasz1}.
\begin{lemma}
\label{T3}
If $\gamma:[a,b]\to X$ is Lipschitz, then the speed $|\dot{\gamma}|(t)$ exists for almost all $t\in [a,b]$ and
$$
\ell(\gamma)=\int_a^b |\dot{\gamma}|(t)\, dt.
$$
\end{lemma}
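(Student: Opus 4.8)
The plan is to prove the two statements — almost everywhere existence of the speed $|\dot\gamma|(t)$ and the integral formula $\ell(\gamma)=\int_a^b|\dot\gamma|(t)\,dt$ — by reducing the vector-valued (metric-space-valued) situation to the scalar case via composition with $1$-Lipschitz functionals, a standard device for curves in general metric spaces. First I would fix a countable dense subset $\{x_k\}_{k\in\bbbn}$ of the image $\gamma([a,b])$ (which is separable, being the continuous image of an interval) and set $g_k(t)=d(\gamma(t),x_k)$. Each $g_k\colon[a,b]\to\bbbr$ is Lipschitz with the same constant as $\gamma$ (by the triangle inequality), hence differentiable a.e. by the classical Rademacher/Lebesgue theorem on the line; discarding a null set, all $g_k$ are differentiable at every $t$ in a set of full measure $E\subset[a,b]$.

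Next I would show that for $t\in E$ the limit defining $|\dot\gamma|(t)$ exists. The key inequality is
$$
|g_k'(t)|\le \liminf_{h\to0}\frac{d(\gamma(t+h),\gamma(t))}{|h|}\le\limsup_{h\to0}\frac{d(\gamma(t+h),\gamma(t))}{|h|},
$$
while for the reverse I would use density: given $h$, pick $x_{k}$ close to $\gamma(t+h)$ or $\gamma(t)$ so that $d(\gamma(t+h),\gamma(t))$ is well approximated by $|g_k(t+h)-g_k(t)|$. Carefully, one defines $v(t)=\sup_k|g_k'(t)|$ for $t\in E$ and proves $v$ is measurable (a countable sup of measurable functions), then shows $\limsup_{h\to0}d(\gamma(t+h),\gamma(t))/|h|\le v(t)\le\liminf_{h\to0}d(\gamma(t+h),\gamma(t))/|h|$; the second inequality is immediate, the first is where the density argument enters. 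This gives existence of $|\dot\gamma|(t)=v(t)$ a.e., and in particular measurability of $t\mapsto|\dot\gamma|(t)$.

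For the length formula, the inequality $\int_a^b|\dot\gamma|(t)\,dt\le\ell(\gamma)$ follows from Fatou's lemma applied to $d(\gamma(t+h),\gamma(t))/|h|$ together with the elementary fact that $\int_a^b \limsup_h \big(d(\gamma(t+h),\gamma(t))/|h|\big)\,dt$ is controlled by the total variation $\ell(\gamma)$ (each difference quotient integrates to at most $\ell(\gamma)$ up to boundary corrections). The reverse inequality $\ell(\gamma)\le\int_a^b|\dot\gamma|(t)\,dt$ is obtained by taking any partition $a=t_0<\cdots<t_n=b$, writing $d(\gamma(t_{i}),\gamma(t_{i+1}))=g_{k_i}(t_{i+1})-g_{k_i}(t_i)$ approximately for a well-chosen $k_i$ (density again), applying the fundamental theorem of calculus to the Lipschitz function $g_{k_i}$ on $[t_i,t_{i+1}]$, and bounding $|g_{k_i}'(s)|\le|\dot\gamma|(s)$ on that subinterval, then summing. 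Taking the supremum over partitions yields the claim. The main obstacle is the density/approximation step used to pass between $d(\gamma(t+h),\gamma(t))$ and the scalar increments $|g_k(t+h)-g_k(t)|$: one must choose the indices $k$ uniformly enough (depending on $h$, and in the partition argument depending on $i$) that the errors vanish in the limit, and verify the measurability of the resulting $\sup_k|g_k'|$. Since this is a classical result I would in fact simply cite \cite[Theorem~3.6]{hajlasz1}, as the excerpt does, rather than reproduce the argument in full.
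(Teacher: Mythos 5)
Your proposal is correct: the paper does not prove Lemma~\ref{T3} but simply cites \cite[Theorem~3.6]{hajlasz1} (the lemma is a well-known result, and the paper's convention is that items labelled ``Lemma'' are cited, not proved). Your sketch is the standard argument via the scalar functions $g_k(t)=d(\gamma(t),x_k)$, and you rightly conclude that one would in practice just invoke the reference, as the paper does.
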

\subsection{$\bbbr$-trees}
\label{TR}
The next result provides several equivalent conditions. A metric space that satisfies any of these conditions is called a {\em metric tree} or an $\bbbr$-{\em tree}. 
\begin{lemma}
\label{T33}
Let $X$ be a geodesic space. Then the following conditions are equivalent.
\begin{itemize}
\item[(a)] For any $x,y\in X, x \neq y$, there is a unique arc with endpoints $x$ and $y$. 
\item[(b)] No subset of $X$ is homeomorphic to $\bbbs^1$.
\item[(c)] $X$ is simply connected and $\dim X=1$ (see Section~\ref{TD}).
\item[(d)] Every geodesic triangle is isometric to a tripod.
\item[(e)] $X$ is $0$-hyperbolic in the sense of Gromov.
\item[(f)] Intersection of any two closed balls is a closed ball or an empty set. 
\item[(g)] For all Lipschitz maps $\gamma:\bbbs^1\to X$ and $\pi:X\to\bbbr^2$, we have
$$
\int_{\bbbs^1}(\pi\circ\gamma)^*(x\, dy)=0.
$$
\end{itemize}
\end{lemma}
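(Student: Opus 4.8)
The plan is to combine the classical characterizations of $\bbbr$-trees with the metric differentiability results of this paper. For the equivalence of (a)--(f) I would run the usual chain of implications. Condition (a)$\Leftrightarrow$(b) is point-set topology: cutting a subset homeomorphic to $\bbbs^1$ at two points yields two distinct arcs with common endpoints, and conversely, given distinct arcs $\alpha\neq\beta$ from $x$ to $y$, choosing a point of $\alpha\setminus\beta$ and taking the first and last parameters at which $\alpha$ returns to $\beta$ on either side of it exhibits an embedded circle in $\alpha\cup\beta$. For (a)$\Rightarrow$(d): in a geodesic triangle $x,y,z$, uniqueness of arcs forces $[x,y]\subseteq[x,z]\cup[z,y]$, connectedness then gives a common point $c$ of the three sides, and uniqueness of arcs again forces $[x,c],[y,c],[z,c]$ to meet only at $c$. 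The implication (d)$\Rightarrow$(e) is standard (a tripod is $0$-slim, which for geodesic spaces yields the $\delta=0$ four-point condition), (e)$\Rightarrow$(a) is the classical computation that $0$-hyperbolicity forces geodesics, hence arcs, to be unique, the equivalence with (f) is elementary, and (b)$\Leftrightarrow$(c) uses that an $\bbbr$-tree is contractible with $\dim=1$ (its metric spheres are totally disconnected) and, conversely, that a simply connected geodesic space of topological dimension $1$ contains no circle. I would cite the literature for the routine parts of this chain.

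The substantive point is to fit (g) into this picture. For (d)$\Rightarrow$(g): assuming (d), the equivalences above give $\dim X=1$, and in a space satisfying (d) the truncation map $\rho_t$ sending $x$ to the point of $[p,x]$ at distance $\max\{d(p,x)-t,0\}$ from a fixed $p$ is $1$-Lipschitz for each $t$ and jointly Lipschitz in $(t,x)$. Hence, for Lipschitz $\gamma\colon\bbbs^1\to X$ with $D=\sup_\theta d(p,\gamma(\theta))$, the map $H(\theta,s)=\rho_{sD}(\gamma(\theta))$ descends to a Lipschitz map of the closed disk $\bbbb^2$ into $X$ with boundary values $\gamma$. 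For Lipschitz $\pi\colon X\to\bbbr^2$, Stokes' theorem for Lipschitz maps gives
$$
\int_{\bbbs^1}(\pi\circ\gamma)^*(x\,dy)=\int_{\bbbb^2}(\pi\circ H)^*(dx\wedge dy)=\int_{\bbbb^2}\det D(\pi\circ H)\,d\mathcal{L}^2 .
$$
Since $\dim X=1$, Theorem~\ref{TM3} gives $\rank\md(H,\xi)\le 1$ for a.e.\ $\xi$, and at a.e.\ point where $\pi\circ H$ is differentiable and $H$ is metrically differentiable we have $\|D(\pi\circ H)(\xi)v\|\le(\lip\pi)\,\md(H,\xi)(v)$; thus $D(\pi\circ H)(\xi)$ annihilates the at-least-one-dimensional kernel of the seminorm $\md(H,\xi)$, so $\det D(\pi\circ H)=0$ a.e.\ and the integral vanishes.

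For the converse I would prove the implication $\neg$(d)$\Rightarrow\neg$(g). Suppose some geodesic triangle $x,y,z$ is not a tripod; after relabeling there is an interior point $m\in[x,y]$ with $r:=\dist(m,[y,z]\cup[z,x])>0$. Let $\gamma$ be the Lipschitz loop running along $[x,y]$, then $[y,z]$, then $[z,x]$, and put $\pi=(\phi,\psi)\colon X\to\bbbr^2$ with $\phi(u)=d(u,x)-d(u,y)$ and $\psi(u)=\dist(u,[y,z]\cup[z,x])$. Along $[x,y]$ one has $\phi(u)=2d(u,x)-d(x,y)$, a strictly monotone bijection onto $[-d(x,y),d(x,y)]$, so the $\pi$-image of $[x,y]$ is the graph $\{(\phi,\Psi(\phi))\colon|\phi|\le d(x,y)\}$ of a Lipschitz function $\Psi\ge 0$ that vanishes at the endpoints and satisfies $\Psi(\phi(m))=r>0$; along $[y,z]\cup[z,x]$ one has $\psi\equiv 0$, so $dy=0$ there. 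Integrating by parts in the variable $\phi$ then gives $\int_{\bbbs^1}(\pi\circ\gamma)^*(x\,dy)=-\int_{-d(x,y)}^{d(x,y)}\Psi(\phi)\,d\phi\neq 0$, contradicting (g). Since $\neg$(d) is the failure of $0$-hyperbolicity for geodesic spaces, this together with the classical chain yields the full equivalence.

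The step I expect to be the main obstacle is this converse for (g): one has to convert the purely metric ``fatness'' of a non-tripod triangle into a genuine planar loop of nonzero signed area, and the device above --- using the monotonicity of $u\mapsto d(u,x)-d(u,y)$ along a geodesic to realize half of $\pi\circ\gamma$ as a graph over an interval --- is what makes the pulled-back integral explicitly computable. A secondary technical point is that the direction (d)$\Rightarrow$(g) relies on two facts the paper supplies elsewhere: that spaces satisfying (d) are $1$-dimensional, so that Theorem~\ref{TM3} applies to $H$, and the validity of Stokes' theorem for Lipschitz maps of the disk.
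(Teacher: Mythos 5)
Note first that the paper itself does not prove Lemma~\ref{T33}; it cites Wenger for the equivalence of (d), (e), (f), (g), Chiswell for (a) and (b), and Andreev--Berestovski\u{\i} for (a), (c), (e). Your self-contained proof is therefore a genuinely different route. The chain for (a)--(f) you sketch is the standard one, and your argument that failure of (d) implies failure of (g) is well conceived: taking $\varphi(u)=d(u,x)-d(u,y)$, monotone along the side $[x,y]$, and $\psi(u)=\dist(u,[y,z]\cup[z,x])$ turns the ``fat'' triangle into an explicit Lipschitz graph over an interval, and the integration by parts correctly produces nonzero signed area. (One small caveat: a single non-tripod triangle could still be $0$-thin if geodesics are non-unique; one should first pass, via the classical chain, to a triangle that actually fails $0$-thinness---e.g.\ a degenerate triangle built from two distinct geodesics joining the same pair of points.)

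There is, however, a genuine gap in the direction (d) $\Rightarrow$ (g). The truncation $\rho_t$ is indeed $1$-Lipschitz for each $t$ and jointly Lipschitz in $(t,x)$, but $H(\theta,s)=\rho_{sD}(\gamma(\theta))$ does \emph{not} descend to a Lipschitz map on the Euclidean disk $\bar{\bbbb}^2$: it descends to a Lipschitz map only on the metric cone $(\bbbs^1\times[0,1])/(\bbbs^1\times\{1\})$, whose quotient metric is not bi-Lipschitz to the Euclidean metric near the tip. Concretely, if $X$ is a tripod with centre $p$ and arms of length $1$ and $\gamma$ traverses the arms, take $z=re^{i\theta}$, $z'=re^{i\theta'}$ with $|\theta-\theta'|\asymp r$: then $|z-z'|\asymp r^2$, while $d\bigl(\rho_{(1-r)D}(\gamma(\theta)),\rho_{(1-r)D}(\gamma(\theta'))\bigr)$ can be of order $r$, so the descended map is only H\"older-$\tfrac12$ at the origin. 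Since Lemma~\ref{tt1} requires a genuine Lipschitz extension to $\bar{\bbbb}^2$, your argument does not close as written.

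The fix is to replace the uniform-rate contraction by the \emph{ratio} contraction: let $C(x,s)$ be the point of $[p,x]$ at distance $s\,d(p,x)$ from $p$, and set $\widetilde H(z)=C\bigl(\gamma(z/|z|),|z|\bigr)$, $\widetilde H(0)=p$. In an $\bbbr$-tree one has the sharper bound $d(C(a,s),C(b,s))\le s\,d(a,b)$ (check the cases according to the positions of $C(a,s)$, $C(b,s)$ relative to the median of $p,a,b$), and then, writing $u=z/|z|$, $u'=z'/|z'|$ and assuming $|z'|\ge|z|>0$,
\begin{align*}
d\bigl(\widetilde H(z),\widetilde H(z')\bigr)
&\le d\bigl(C(\gamma(u),|z|),C(\gamma(u),|z'|)\bigr) + d\bigl(C(\gamma(u),|z'|),C(\gamma(u'),|z'|)\bigr)\\
&\le D\,\bigl||z|-|z'|\bigr| + |z'|\,(\lip\gamma)\,|u-u'|
\le (D+2\lip\gamma)\,|z-z'|,
\end{align*}
using $|u-u'|\le 2|z-z'|/\max(|z|,|z'|)$. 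This produces a genuine Lipschitz filling, after which your Stokes/rank argument (Lemma~\ref{tt1}, $\dim X=1$ together with Theorem~\ref{TM3}, and $\md(\pi\circ\widetilde H,\xi)\le(\lip\pi)\,\md(\widetilde H,\xi)$ forcing $\det D(\pi\circ\widetilde H)=0$ a.e.) goes through. It is also worth noticing that the paper never needs to fill a loop \emph{in} the tree: in the proof of Theorem~\ref{TM7} loops in $Z_\Phi$ are lifted to the cube $[0,1]^n$, filled there trivially, and pushed forward by $\psi$---exactly the move that sidesteps the issue you ran into.
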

A subset of a metric space is called an {\em arc} if it is homeomorphic to the interval [0,1]. The {\em endpoints} of an arc, are the images of $0$ and $1$.
A {\em tripod} is a geodesic space consisting of three segments that meet at a common endpoint. We will not recall the definition of the Gromov hyperbolic space since we will not use it in the paper. We collected the equivalent conditions for reader's convenience and in fact we will mainly need condition (g).

For equivalence between (d), (e) and (f) and (g), see \cite{wenger}. For equivalence of (a) and (b) see \cite[Proposition 2.3]{chiswell}. Finally, the equivalence between (a) and (c) and (e) can be found in \cite{andreevb}.

\subsection{The Heisenberg groups}
\label{Heis}
Material of this section will be only used in the proof of Theorem~\ref{TM6} (Theorem~\ref{T25}), which is an application of Theorem~\ref{TM5}, and will not play any role in the other parts of the paper, so the reader may skip this section.

For any positive integer $n$,
we define the Heisenberg group  as
$\bbbh^n = \bbbr^n \times \bbbr^n\times\bbbr = \bbbr^{2n+1}$, with
the group law defined by
$$
(x,y,t)*(x',y',t')=
(x+x',t+y',t+t'+2\sum_{j=1}^n(y_jx_j'-x_jy_j')).
$$
This is a Lie group with a
basis of left invariant vector fields given 
at any point $(x_1,y_1,\dots,x_n,y_n,t) \in \bbbh^n$ by
$$
X_j=\frac{\partial}{\partial x_j}+ 2y_j\, \frac{\partial}{\partial t},
\quad
Y_j=\frac{\partial}{\partial y_j}- 2x_j\, \frac{\partial}{\partial t},
\quad
T=\frac{\partial}{\partial t}
\quad
j=1,2,\dots,n.
$$
The Heisenberg group is equipped with the left invariant Riemannian metric $g$ such that the the vector fields $X_j,Y_j,T$ are orthonormal.

The Heisenberg group is equipped with the so called 
\emph{horizontal distribution}
$$
H_p \bbbh^n = \text{span} \left\{ X_1(p),Y_1(p),\dots,X_n(p),Y_n(p) \right\} \quad \text{for all } p \in \bbbh^n.
$$
This is a smooth distribution of 
$2n$-dimensional subspaces in the 
$(2n+1)$-dimensional tangent space $T_p \bbbh^n = T_p \bbbr^{2n+1}$.
A vector $v \in T_p \bbbr^{2n+1}$ is \emph{horizontal}
if $v \in H_p \bbbh^n$.

An Lipschitz curve $\gamma:[a,b] \to \mathbb{R}^{2n+1}$ 
is a \emph{horizontal curve} if it is almost everywhere tangent to the horizontal distribution i.e.,
$\gamma'(t) \in H_{\gamma(t)} \bbbh^n$ for almost every $t \in [a,b]$. It is well known that any two points in $\bbbh^n$ can be connected by a horizontal curve.
The \emph{Carnot-Carath\'{e}odory metric} $d_{cc}$ in $\bbbh^n$
is defined as the infimum of lengths (computed with respect to the metric $g$) of horizontal curves connecting given two points.
When we talk about the Heisenberg group, we always regard it as a metric space with the Carnot-Carath\'eodory metric $d_{cc}$.
The length of a rectifiable curve $\gamma$ in $(\bbbh^n,d_{cc})$ will be denoted by $\ell_{cc}(\gamma)$. 

Let $\pi:\bbbr^{2n+1}\to\bbbr^{2n}$, $\pi(x,y,t)=(x,y)$ be the projection onto the first $2n$-coordinates. The next result is well known.
\begin{lemma}
\label{T24}
If $\gamma:[a,b]\to \bbbh^n$ is Lipschitz continuous, then $\pi\circ\gamma:[a,b]\to\bbbr^{2n}$ is Lipschitz continuous and $\ell_{cc}(\gamma)=\ell(\pi\circ\gamma)$.
\end{lemma}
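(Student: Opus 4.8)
The plan is to prove the two assertions — Lipschitz continuity of $\pi\circ\gamma$ and the identity $\ell_{cc}(\gamma)=\ell(\pi\circ\gamma)$ — in three steps: show $\pi\colon(\bbbh^n,d_{cc})\to\bbbr^{2n}$ is $1$-Lipschitz, verify the length identity for classical (absolutely continuous) horizontal curves by a pointwise computation, and finally show that the given $d_{cc}$-Lipschitz curve $\gamma$ is such a horizontal curve. First I would check that $\pi$ is $1$-Lipschitz. For $p,q\in\bbbh^n$ and a horizontal curve $\sigma=(\xi,\eta,\tau)$ joining them, horizontality gives $\sigma'=\sum_j(a_jX_j+b_jY_j)$ a.e., so $\xi_j'=a_j$, $\eta_j'=b_j$, and since $X_1(p),Y_1(p),\dots$ is a $g$-orthonormal basis of $H_p\bbbh^n$ that $d\pi$ sends to the standard orthonormal basis of $\bbbr^{2n}$, one has $|\sigma'(s)|_g=|(\pi\circ\sigma)'(s)|$ a.e. Integrating, the $g$-length of $\sigma$ equals $\ell(\pi\circ\sigma)\ge|\pi(p)-\pi(q)|$, and taking the infimum over such $\sigma$ gives $|\pi(p)-\pi(q)|\le d_{cc}(p,q)$. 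Hence $\pi\circ\gamma$ is Lipschitz whenever $\gamma$ is, and $\ell(\pi\circ\gamma)\le\ell_{cc}(\gamma)$ because $1$-Lipschitz maps do not increase the length of curves. The same pointwise identity shows that for every classical AC horizontal curve $\sigma$ one has $\ell_{cc}(\sigma)=\int|\sigma'|_g=\ell(\pi\circ\sigma)$: the inequality "$\le$" follows from the definition of $d_{cc}$ applied to the pieces of a partition, and "$\ge$" from $\pi$ being $1$-Lipschitz.

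It remains to prove $\ell_{cc}(\gamma)\le\ell(\pi\circ\gamma)$, and by the previous step it suffices to show that $\gamma$ equals the classical horizontal lift $\hat\sigma$ of $\pi\circ\gamma$ starting at $\gamma(a)$, i.e. $\hat\sigma(s)=(\pi(\gamma(s)),\,t(\gamma(a))+\int_a^s 2\sum_j(\hat\eta_j\hat\xi_j'-\hat\xi_j\hat\eta_j'))$ where $(\hat\xi,\hat\eta)=\pi\circ\gamma$; note that $\hat\sigma$ is Euclidean-Lipschitz, horizontal, and satisfies $\pi\circ\hat\sigma=\pi\circ\gamma$. This is where I expect the main work. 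Since $\gamma(s)$ and $\hat\sigma(s)$ have the same $\pi$-image, the element $\gamma(s)^{-1}*\hat\sigma(s)$ is purely vertical, say $(0,0,\delta(s))$ with $\delta(a)=0$, hence lies in the center of $\bbbh^n$. Using centrality, $\gamma(s)^{-1}*\gamma(s')=(\hat\sigma(s)^{-1}*\hat\sigma(s'))*(0,0,\delta(s)-\delta(s'))$, so the vertical component of $\gamma(s)^{-1}*\gamma(s')$ is $w(s,s')+\delta(s)-\delta(s')$, where $w(s,s')$ is the vertical component of $\hat\sigma(s)^{-1}*\hat\sigma(s')$. The ball--box (homogeneous norm) estimate $d_{cc}(p,q)^2\gtrsim|\text{vertical part of }p^{-1}*q|$ then gives $|w(s,s')+\delta(s)-\delta(s')|\lesssim d_{cc}(\gamma(s),\gamma(s'))^2\lesssim|s-s'|^2$, and likewise $|w(s,s')|\lesssim d_{cc}(\hat\sigma(s),\hat\sigma(s'))^2\lesssim|s-s'|^2$ (here $\hat\sigma$ is $d_{cc}$-Lipschitz because $\ell_{cc}(\hat\sigma|_{[s,s']})=\ell(\pi\circ\hat\sigma|_{[s,s']})=\ell(\pi\circ\gamma|_{[s,s']})$ by the identity of the first step). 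Therefore $|\delta(s)-\delta(s')|\lesssim|s-s'|^2$, which forces $\delta\equiv\delta(a)=0$, so $\gamma=\hat\sigma$ and $\ell_{cc}(\gamma)=\ell_{cc}(\hat\sigma)=\ell(\pi\circ\gamma)$.

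I would flag that the statement "$\gamma$ is a classical horizontal curve" is the heart of the matter and is standard in sub-Riemannian geometry — a $d_{cc}$-rectifiable curve, suitably parametrized, is absolutely continuous with a.e. horizontal derivative and has $d_{cc}$-length equal to its sub-Riemannian length — so one may simply cite this, in which case the proof reduces to the pointwise isometry $|\sigma'|_g=|(\pi\circ\sigma)'|$ of the first step. The only genuine external input needed for the self-contained argument above is the homogeneous-norm lower bound $d_{cc}(e,(0,0,c))\gtrsim|c|^{1/2}$ (equivalently the sub-Riemannian isoperimetric inequality), which together with the $1$-Lipschitz property of $\pi$ and right-invariance of $d_{cc}$ yields the vertical estimate used above.
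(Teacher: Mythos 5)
Your proof is correct. The paper states Lemma~\ref{T24} as well known and does not supply a proof, so there is no paper argument to compare against; your write-up is a self-contained verification along the standard sub-Riemannian lines. The plan is sound: the pointwise identity $|\sigma'|_g=|(\pi\circ\sigma)'|$ for horizontal $\sigma$ simultaneously shows $\pi$ is $1$-Lipschitz (hence $\ell(\pi\circ\gamma)\le \ell_{cc}(\gamma)$) and that $\ell_{cc}(\sigma)=\ell(\pi\circ\sigma)$ for every Euclidean-Lipschitz horizontal curve. The one real issue—that a $d_{cc}$-Lipschitz $\gamma$ actually coincides with the horizontal lift $\hat\sigma$ of $\pi\circ\gamma$—you handle by centrality of the vertical direction together with the homogeneous-norm lower bound $|{\rm vert}(p^{-1}*q)|\lesssim d_{cc}(p,q)^2$, obtaining $|\delta(s)-\delta(s')|\lesssim |s-s'|^2$ and hence $\delta\equiv 0$. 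This is a clean, elementary substitute for the usual black-box citation that ``$d_{cc}$-Lipschitz curves are absolutely continuous horizontal curves,'' and it correctly isolates the only external input needed (the ball--box/Kor\'anyi estimate). I checked the algebra with the paper's group law: $\gamma(s)^{-1}*\hat\sigma(s)$ is indeed purely vertical when the horizontal parts agree, and your sign bookkeeping for $w(s,s')+\delta(s)-\delta(s')$ is right. One minor remark: the Lipschitzness of $\hat\sigma$ in the $d_{cc}$ metric, which you invoke for the bound on $w$, follows exactly as you say from $\ell_{cc}(\hat\sigma|_{[s,s']})=\ell(\pi\circ\gamma|_{[s,s']})$ and does not need to be assumed; it is good that you noted this rather than leaving it implicit.
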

In other words, the projection $\pi:\bbbh^n\to\bbbr^{2n}$ preserves lengths of Lipschitz curves.

We will need the following nontrivial Lipschitz extension result in the proof of Theorem~\ref{T25}. It is a corollary to \cite[Theorem 1.2]{wengery2}.
\begin{lemma}[Wenger-Young]
\label{elm5}
If $k \leq n$, then the the pair $(\bbbr^k, \bbbh^n)$ has the Lipschitz extension property, i.e.\ there is a constant $C>0$ such that for any $A\subset \bbbr^k$ and any $L$-Lipschitz map $f\colon A \to \bbbh^n$, there is a $CL$-Lipschitz map $F\colon \bbbr^k \to \bbbh^n$ satisfying $F(x)=f(x)$ for all $x\in A$.
\end{lemma}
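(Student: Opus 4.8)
The plan is to derive Lemma~\ref{elm5} by feeding the Wenger-Young theorem \cite[Theorem~1.2]{wengery2} into a general extension principle. Recall that a metric space $Y$ is \emph{Lipschitz $m$-connected} if there is a constant $c$ such that for every $0\le j\le m$ each $L$-Lipschitz map $\bbbs^j\to Y$ extends to a $cL$-Lipschitz map defined on the closed unit ball $\bbbb^{j+1}\subset\bbbr^{j+1}$ with values in $Y$. The content of \cite[Theorem~1.2]{wengery2} is that $\bbbh^n$ with the metric $d_{cc}$ is Lipschitz $(n-1)$-connected, with constants depending only on $n$. The bridge from this to extension of maps defined on subsets of $\bbbr^k$ is the Lang-Schlichenmaier extension theorem: if a metric space $X$ has Nagata dimension at most $N$ and $Y$ is a complete, Lipschitz $(N-1)$-connected metric space, then the pair $(X,Y)$ has the Lipschitz extension property, with extension constant depending only on $N$, the Nagata-dimension constant of $X$, and the connectivity constants of $Y$.

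Granting these two inputs, the rest is bookkeeping. The space $\bbbr^k$ has Nagata dimension exactly $k$, with constant depending only on $k$; the space $(\bbbh^n,d_{cc})$ is complete (in fact proper); and since $k\le n$ we have $k-1\le n-1$, so Lipschitz $(n-1)$-connectedness of $\bbbh^n$ trivially implies Lipschitz $(k-1)$-connectedness, with constants still depending only on $n$. Applying the Lang-Schlichenmaier theorem with $X=\bbbr^k$, $N=k$, and $Y=\bbbh^n$ then produces a constant $C$ depending only on $n$ (legitimate since $k\le n$) such that every $L$-Lipschitz map $f\colon A\to\bbbh^n$ with $A\subset\bbbr^k$ extends to a $CL$-Lipschitz map $F\colon\bbbr^k\to\bbbh^n$, which is precisely the assertion. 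If \cite[Theorem~1.2]{wengery2} is instead quoted directly as a Lipschitz extension statement for sources of bounded (Nagata or topological) dimension, the deduction is even shorter: one simply specializes it to the source $\bbbr^k$ with $k\le n$.

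For completeness I would also recall the mechanism behind the Lang-Schlichenmaier step, which could be reproduced directly here. First extend $f$ to $\bar A$ with the same Lipschitz constant, which is possible because Cauchy sequences converge in the complete space $\bbbh^n$. Then take a Whitney decomposition of the open set $\bbbr^k\setminus\bar A$ into dyadic cubes together with a compatible simplicial subdivision whose simplices have diameter comparable to, and lie at distance comparable to their size from, $\bar A$. Finally build $F$ by induction on skeleta: define it on vertices via values of $f$ at nearby points of $\bar A$, and on each $(j+1)$-simplex with $0\le j\le k-1$ fill the already-constructed Lipschitz map on its boundary $j$-sphere using Lipschitz $(k-1)$-connectedness of $\bbbh^n$. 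The Whitney geometry keeps the Lipschitz constant produced at each of the finitely many stages bounded by a multiple of $L$ depending only on $k$ and $n$, and a matching estimate across $\partial\bar A$ gives the global bound.

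The essential content is \cite[Theorem~1.2]{wengery2} itself --- the existence of Lipschitz fillings of Lipschitz $j$-spheres in $\bbbh^n$ for $j\le n-1$ with uniformly controlled constants --- a genuinely deep fact that I would treat as a black box; this is also exactly where the hypothesis $k\le n$ enters, since $\bbbh^n$ fails to be Lipschitz $n$-connected. Everything else is routine: identifying $\dim_N\bbbr^k=k$, matching the index shift between $\dim_N X=k$ and the required $(k-1)$-connectedness of the target, and tracking that all constants ultimately depend only on $n$. The only real danger is mis-stating the precise form of \cite[Theorem~1.2]{wengery2} or of the Lang-Schlichenmaier theorem, so I would pin both down verbatim before writing the final proof.
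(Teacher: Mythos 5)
The paper states this as a \emph{Lemma} (its convention for citing known results) and gives no proof; it simply notes that the statement ``is a corollary to \cite[Theorem~1.2]{wengery2}.'' So there is no in-paper argument to compare yours against. Your derivation is correct and is exactly the standard mechanism that turns Wenger--Young's result into the stated extension property: $\bbbh^n\cong J^1(\bbbr^n)$ is complete and, by Wenger--Young, Lipschitz $(n-1)$-connected with constants depending only on $n$; $\bbbr^k$ has Nagata dimension $k$ with a dimension-dependent constant; and since $k\le n$ forces $k-1\le n-1$, the Lang--Schlichenmaier extension theorem applies with $X=\bbbr^k$, $N=k$, $Y=\bbbh^n$, yielding a bound $C=C(n)$. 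Your closing caveat is also well taken: depending on the exact phrasing of \cite[Theorem~1.2]{wengery2} (connectivity statement versus extension statement with a finite-dimensional source), the Lang--Schlichenmaier detour may be either the necessary bridge or redundant; either reading delivers the lemma, and your write-up covers both, so there is no gap.
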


\section{Kirchheim-Rademacher theorem and the area formula}
\label{KRT}
The new results in this section are  Propositions~\ref{Tl14} and~\ref{T28}. Another novelty is the emphasis on the componentwise derivative which makes geometric applications easy. The componentwise derivative has been previously investigated in \cite{hajlaszm,hajlaszMM,HZ}, but without connection to the metric derivative.

If $f:\bbbr^n\to\bbbr^m$ is differentiable at $x\in\bbbr^n$, then it follows from the triangle inequality that
$$
\lim_{y\to x}\frac{|f(y)-f(x)|-\md (f,x)(y-x)}{|y-x|}=0,
\quad
\text{where}
\quad
\md (f,x)(v)=|Df(x)v|.
$$
Note that $\md (f,x)(\cdot)$ is a seminorm. Recall that $\sigma:\bbbr^n\to [0,\infty)$ is a {\em seminorm} if $\sigma(\lambda v)=|\lambda|\sigma(v)$ and $\sigma(v+w)\leq\sigma(v)+\sigma(w)$ for all $\lambda\in\bbbr$ and $v,w\in\bbbr^n$. Unlike a norm, a seminorm may vanish on a non-trivial linear subspace of $\bbbr^n$,
$N_\sigma:=\{v\in\bbbr^n:\, \sigma(v)=0\}$, and we define
the {\em rank of a seminorm} $\sigma$ on $\bbbr^n$ as
$\rank\sigma=n-\dim N_\sigma=\dim N_\sigma^\perp$.
That is, it is the maximal dimension of a linear subspace on which $\sigma$ is a norm.

Let $f:\Omega\to X$ be a map from an open set $\Omega\subset\bbbr^n$ to a metric space $(X,d)$. We say that $f$ is {\em metrically differentiable} at $x\in\Omega$, if there is a seminorm $\md (f,x)(\cdot)$ on $\bbbr^n$ such that 
$$
\lim_{y\to x}\frac{d(f(y),f(x))-\md (f,x)(y-x)}{|y-x|}=0.
$$
If we substitute $y$ with $y=x+tv$, $v\in\bbbr^n$ and $f$ is metrically differentiable at $x$, then
\begin{equation}
\label{eq10}
\md(f,x)(v)=\lim_{t\to 0}\frac{d(f(x+tv),f(x))}{|t|}
\quad
\text{for all $v\in\bbbr^n$.}
\end{equation}
Hence, for every $v\in\bbbr^n$, the speed of the curve $t\mapsto f(x+tv)$ exists at $t=0$ and as a function of $v$ it defines a seminorm on $\bbbr^n$.

The above definition is due to Kirchheim \cite{kir}, who proved the following important generalization of the Rademacher theorem known as the {\em Kirchheim-Rademacher theorem}.
\begin{lemma}[Kirchheim]
\label{T4}
If $f:\Omega\to X$ is a Lipschitz continuous map from an open set $\Omega\subset\bbbr^n$, to a metric space $X$, then $f$ is metrically differentiable a.e.
\end{lemma}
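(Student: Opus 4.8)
\emph{Proof proposal.} The plan is to reduce the statement to the classical Rademacher theorem for real-valued functions and then to reconstruct the metric derivative as a supremum of ordinary differentials, the only delicate point being a matching \emph{upper} estimate proved by integrating along lines. Since $\Omega$ is separable and $f$ is continuous, $f(\Omega)$ is separable, so I would first compose $f$ with a Kuratowski--Fr\'echet isometric embedding $\iota\colon f(\Omega)\hookrightarrow\ell^\infty$, $\iota(p)=\big(d(p,a_j)-d(a_0,a_j)\big)_{j}$, built from a countable dense set $\{a_j\}\subset f(\Omega)$ and a basepoint $a_0$. An isometric embedding neither creates nor destroys metric differentiability at a point, so one may assume $X=\ell^\infty$ and write $f=(f_j)_{j\in\bbbn}$ with $f_j(x)=d(f(x),a_j)-d(a_0,a_j)$ an $L$-Lipschitz real function and $d(f(x),f(y))=\sup_j|f_j(x)-f_j(y)|$; extending each $f_j$ by McShane (the extended family still has difference quotients $\le L$) one may further assume $\Omega=\bbbr^n$.

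By Rademacher's theorem each $f_j$ is differentiable off a null set; let $\Omega_0$ be the full-measure set on which every $f_j$ is differentiable. For $x\in\Omega_0$ I would take as candidate
$$
\md(f,x)(v):=\sup_{j}|\nabla f_j(x)\cdot v|,\qquad v\in\bbbr^n,
$$
which is a seminorm (homogeneity and subadditivity are immediate from the formula) with $\md(f,x)(v)\le L|v|$ and $\md(f,x)(-v)=\md(f,x)(v)$, and $x\mapsto\md(f,x)(v)$ is measurable and hence locally integrable. The easy half is the directional lower bound: for every $x\in\Omega_0$ and every $v$, since $|f_j(x+tv)-f_j(x)|/|t|\to|\nabla f_j(x)\cdot v|$ for each $j$, the trivial inequality $\liminf_t\sup_j\ge\sup_j\liminf_t$ yields $\liminf_{t\to0}d(f(x+tv),f(x))/|t|\ge\md(f,x)(v)$.

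The substantial point is the matching directional upper bound, and here a pointwise argument is hopeless: pointwise differentiability of each $f_j$ does not give equi-differentiability of the family $\{f_j\}$ at a fixed point --- a supremum of infinitely many first-order-small functions need not be first-order small --- so the estimate can genuinely fail on a set of positive measure and must be obtained almost everywhere by integration. Fix $v\in\mathbb{Q}^n$ and $t>0$ (the case $t<0$ follows by replacing $v$ with $-v$). Each $f_j$, being Lipschitz, is absolutely continuous on almost every line parallel to $v$, and on almost every such line its line-derivative agrees a.e.\ with $\nabla f_j(\cdot)\cdot v$; hence by Fubini, and after a countable intersection over $j$, for a.e.\ $x$ and all $j$ simultaneously,
$$
|f_j(x+tv)-f_j(x)|=\Big|\int_0^t\nabla f_j(x+sv)\cdot v\,ds\Big|\le\int_0^t\md(f,x+sv)(v)\,ds .
$$
Taking $\sup_j$ on the left gives $d(f(x+tv),f(x))/t\le t^{-1}\int_0^t\md(f,x+sv)(v)\,ds$, and since $s\mapsto\md(f,x+sv)(v)$ is bounded and measurable, the one-sided Lebesgue differentiation theorem along the line shows the right-hand side tends to $\md(f,x)(v)$ as $t\to0^+$ for a.e.\ $x$. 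Intersecting over the countably many $v\in\mathbb{Q}^n$: for a.e.\ $x$ and every rational $v$, $\lim_{t\to0}d(f(x+tv),f(x))/|t|=\md(f,x)(v)$.

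It then remains to pass from rational rays to genuine metric differentiability, which is routine. For each fixed $t$ the map $v\mapsto d(f(x+tv),f(x))/|t|$ is $L$-Lipschitz, uniformly in $t$, and $\md(f,x)(\cdot)$ is $L$-Lipschitz, so the limit identity of the previous paragraph extends from $\mathbb{Q}^n$ to all $v\in\bbbr^n$. Finally, given $\eps>0$, I would choose a finite $\eps$-net $u_1,\dots,u_N$ of $\bbbs^{n-1}$ and then $\delta>0$ so small that $\big|\,|t|^{-1}d(f(x+tu_i),f(x))-\md(f,x)(u_i)\,\big|<\eps$ for all $i$ and all $0<|t|<\delta$; for an arbitrary $y$ with $0<|y-x|<\delta$, writing $y=x+|y-x|u$ and picking $u_i$ with $|u-u_i|<\eps$, the two $L$-Lipschitz bounds give $\big|\,d(f(y),f(x))-\md(f,x)(y-x)\,\big|\le C(L)\,\eps\,|y-x|$. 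Letting $\eps\to0$ completes the proof, and by \eqref{eq10} the seminorm so produced is the one in the statement. The only real obstacle is the integration-along-lines argument of the third paragraph; everything else is bookkeeping with Lipschitz constants and finite nets.
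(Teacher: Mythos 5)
The paper does not actually prove Lemma~\ref{T4}: it cites Kirchheim~\cite{kir} and Ambrosio--Kirchheim~\cite{ambrosiok} and records the componentwise formula as Lemma~\ref{T6}. Your reconstruction is correct and matches the outline the paper gestures at: reduce to $\ell^\infty$ via Kuratowski--Fr\'echet, take $\md(f,x)(v)=\sup_j|\nabla f_j(x)\cdot v|$ as the candidate seminorm (exactly the formula in Lemma~\ref{T6}), and then upgrade to genuine metric differentiability. You also correctly isolate the one non-trivial step: pointwise Rademacher for each $f_j$ does \emph{not} give equi-differentiability of the family, so the matching upper bound cannot be read off at a single point. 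Your fix --- bound $|f_j(x+tv)-f_j(x)|$ by $\int_0^t\md(f,x+sv)(v)\,ds$ along a.e.\ line via absolute continuity of $f_j$ on lines, then invoke one-dimensional Lebesgue differentiation and Fubini to get a.e.\ directional convergence for each rational $v$ --- is the standard device, and the final passage from a countable dense set of directions to all directions and then to full metric differentiability via $L$-Lipschitz dependence on $v$ and a finite $\eps$-net of $\bbbs^{n-1}$ is sound (the net argument gives $\limsup_{t\to 0}\sup_{|u|=1}|d(f(x+tu),f(x))/t-\md(f,x)(u)|\le (2L+1)\eps$ for every $\eps$).

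Two small remarks, neither a gap. First, in the integration step you want the fundamental theorem of calculus to hold simultaneously for all $j$ on the chosen line; you get this because the exceptional family of lines for each $j$ is Lebesgue-null in the quotient by the direction $v$, and a countable union of these is still null --- you say ``by Fubini, and after a countable intersection over $j$,'' which is the right invocation, just phrased slightly loosely (you intersect the sets of good lines, equivalently take the union of the exceptional sets). Second, the McShane extension to $\Omega=\bbbr^n$ is harmless but unnecessary: the same argument works on any open $\Omega$, and avoiding the extension sidesteps having to check that the extended sequence still takes values in $\ell^\infty$ (it does, since each component stays $L$-Lipschitz, but it is an extra check). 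Your proof establishes more than the statement asks for --- you also recover the formula of Lemma~\ref{T6} --- which is consistent with how the paper frames Lemma~\ref{T4} as a consequence of Lemma~\ref{T6}.
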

Recall that $\ell^\infty$ is the Banach space of all bounded real sequences $x=(x_i)_{i=1}^\infty$, with the norm $\Vert x\Vert_\infty=\sup_i |x_i|$. 
\begin{lemma}[Kuratowski-Fr\'echet]
\label{T5}
Every separable metric space admits an isometric embedding into $\ell^\infty$.
\end{lemma}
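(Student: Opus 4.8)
The plan is to use the classical Kuratowski embedding. Assume $X\neq\emptyset$ (otherwise there is nothing to prove) and, using separability, fix a countable dense subset $\{x_i\}_{i=1}^\infty\subset X$ together with a basepoint, which we may take to be $x_1$. I would define $\iota\colon X\to\ell^\infty$ by
$$
\iota(x)=\big(d(x,x_i)-d(x_1,x_i)\big)_{i=1}^\infty ,
$$
and then check, in order, that (i) $\iota(x)\in\ell^\infty$ for every $x$, and (ii) $\iota$ is distance-preserving.

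For (i): by the triangle inequality $|d(x,x_i)-d(x_1,x_i)|\le d(x,x_1)$ for every $i$, so $\|\iota(x)\|_\infty\le d(x,x_1)<\infty$. Note that subtracting the constant term $d(x_1,x_i)$ is what makes this work — the raw sequence $(d(x,x_i))_{i=1}^\infty$ need not be bounded. The basepoint terms cancel in differences, so they play no further role.

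For (ii): the upper bound $\|\iota(x)-\iota(y)\|_\infty\le d(x,y)$ is again immediate from the triangle inequality, since $|d(x,x_i)-d(y,x_i)|\le d(x,y)$ for every $i$. For the matching lower bound I would use density: given $\eps>0$, choose $x_i$ with $d(x,x_i)<\eps$; then
$$
|d(x,x_i)-d(y,x_i)|\ge d(y,x_i)-d(x,x_i)\ge d(x,y)-2d(x,x_i)>d(x,y)-2\eps ,
$$
hence $\sup_i|d(x,x_i)-d(y,x_i)|\ge d(x,y)$. Letting $\eps\to0$ gives $\|\iota(x)-\iota(y)\|_\infty=d(x,y)$, so $\iota$ is an isometric embedding, completing the proof.

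There is no real obstacle here; the only two points that require a moment's attention are exactly the ones isolated above — boundedness of the image (handled by subtracting the basepoint terms) and the lower bound in the isometry estimate (handled by approximating $x$ by points of the dense set, i.e.\ using continuity of $y\mapsto d(\,\cdot\,,y)$). If one wanted, one could alternatively cite this as a standard fact, but the argument is short enough to include in full.
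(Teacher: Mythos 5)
Your proof is correct and is exactly the classical Kuratowski embedding argument; the paper states this as a well-known lemma and gives no proof, so there is nothing to compare against, but the construction $x\mapsto (d(x,x_i)-d(x_1,x_i))_i$ with the two estimates you isolate (boundedness via subtracting basepoint distances, and the matching lower bound via density) is precisely the standard argument the paper is implicitly invoking.
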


Although, it is not assumed that the space $X$ in Lemma~\ref{T4} is separable, the subset $f(\Omega)\subset X$ is separable, and hence we can assume  without loss of generality that $X=\ell^\infty$. Then, Lemma~\ref{T4} is a consequence of the following more detailed result \cite{ambrosiok,kir}.
\begin{lemma}
\label{T6}
Let $f:\bbbr^n\supset\Omega\to\ell^\infty$, $f=(f_1,f_2,\ldots)$ be a Lipschitz mapping. Then $f$ is metrically differentiable a.e., and
\begin{equation}
\label{eq1}
\md (f,x)(v)=\sup_{i\in\bbbn}|\nabla f_i(x)\cdot v|
\quad
\text{for almost all $x\in\Omega$ and all $v\in\bbbr^n$.}
\end{equation}
\end{lemma}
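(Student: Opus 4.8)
The plan is to prove Lemma~\ref{T6} directly; once $f$ is shown to be metrically differentiable at $x$ with the seminorm $\sigma_x(v):=\sup_i|\nabla f_i(x)\cdot v|$, formula~\eqref{eq1} is immediate from \eqref{eq10}, and Lemma~\ref{T4} follows from it via the Kuratowski embedding of Lemma~\ref{T5}. Since metric differentiability is a local property we may assume $\Omega=\bbbr^n$ (or restrict attention to balls compactly contained in $\Omega$). The inequality $\|f(x)-f(y)\|_\infty=\sup_i|f_i(x)-f_i(y)|\leq L|x-y|$ shows that each component $f_i$ is $L$-Lipschitz, so by Rademacher's theorem there is a full-measure set $\Omega_1\subset\Omega$ on which every $f_i$ is differentiable; there $|\nabla f_i|\leq L$, and for $x\in\Omega_1$ the map $v\mapsto\sigma_x(v)$ is a seminorm on $\bbbr^n$ with $\sigma_x(v)\leq L|v|$.

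Next I would establish the two matching estimates for the directional limit. The lower bound is elementary: for $x\in\Omega_1$ and any $v$, dividing $\|f(x+tv)-f(x)\|_\infty\geq|f_i(x+tv)-f_i(x)|$ by $t$ and letting $t\to0^+$ gives $\liminf_{t\to0^+}\|f(x+tv)-f(x)\|_\infty/t\geq|\nabla f_i(x)\cdot v|$ for every $i$, hence $\geq\sigma_x(v)$. The upper bound is the delicate point, because the supremum over components does not commute with the limit in $t$, and the identity can genuinely fail on a null set. To handle it I would fix $v$ and use, via Fubini and Rademacher, that for a.e.\ $x$ each $f_i$ is absolutely continuous on the segment $s\mapsto x+sv$ with derivative $\nabla f_i(x+sv)\cdot v$, so that for small $t$
$$
|f_i(x+tv)-f_i(x)|=\Big|\int_0^t\nabla f_i(x+sv)\cdot v\,ds\Big|\leq\int_0^t\sigma_{x+sv}(v)\,ds .
$$
The right-hand side does not depend on $i$, so taking the supremum over $i$ on the left yields $\|f(x+tv)-f(x)\|_\infty\leq\int_0^t\sigma_{x+sv}(v)\,ds$; dividing by $t$ and invoking the Lebesgue differentiation theorem at a Lebesgue point of the bounded measurable function $s\mapsto\sigma_{x+sv}(v)$ --- a property of a.e.\ $x$, again by Fubini --- gives $\limsup_{t\to0^+}\|f(x+tv)-f(x)\|_\infty/t\leq\sigma_x(v)$. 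Combined with the lower bound, and with the same reasoning for the direction $-v$ (recall $\sigma_x(-v)=\sigma_x(v)$), this gives $\lim_{t\to0}d(f(x+tv),f(x))/|t|=\sigma_x(v)$ for a.e.\ $x$ and the chosen $v$.

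It remains to make the null set uniform in $v$ and to pass from directional limits to genuine metric differentiability. Running the previous step over a countable dense set $D\subset\bbbr^n$ produces a full-measure set $\Omega_2\subset\Omega_1$ where the directional limit equals $\sigma_x(v)$ for all $v\in D$; since both $v\mapsto\limsup_{t\to0}\|f(x+tv)-f(x)\|_\infty/|t|$ and $v\mapsto\sigma_x(v)$ are $L$-Lipschitz and agree on $D$, they coincide on $\bbbr^n$, so the directional limit equals $\sigma_x(v)$ for all $x\in\Omega_2$ and all $v$. Finally, if metric differentiability failed at some $x\in\Omega_2$ there would be $\eps>0$ and $y_k\to x$, $y_k\neq x$, with $r_k:=|y_k-x|\to0$, $\theta_k:=(y_k-x)/r_k$, and $\big|\,\|f(y_k)-f(x)\|_\infty-r_k\sigma_x(\theta_k)\,\big|\geq\eps r_k$; after passing to a subsequence with $\theta_k\to\theta\in\bbbs^{n-1}$, the estimate $\big|\,\|f(x+r_k\theta_k)-f(x)\|_\infty-\|f(x+r_k\theta)-f(x)\|_\infty\,\big|\leq Lr_k|\theta_k-\theta|$ combined with $\|f(x+r_k\theta)-f(x)\|_\infty/r_k\to\sigma_x(\theta)$ forces $\|f(y_k)-f(x)\|_\infty/r_k\to\sigma_x(\theta)$, while $\sigma_x(\theta_k)\to\sigma_x(\theta)$ by continuity of $\sigma_x$; hence $\big|\,\|f(y_k)-f(x)\|_\infty/r_k-\sigma_x(\theta_k)\,\big|\to0$, which is incompatible with the bound $\geq\eps$. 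The main obstacle is this upper-bound step, and the device that makes it work is the integration-along-lines estimate, which moves the supremum over components past the limiting procedure before one differentiates.
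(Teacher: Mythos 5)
The paper does not prove Lemma~\ref{T6}; it is quoted from Kirchheim and Ambrosio--Kirchheim, with Lemma~\ref{T4} then deduced from it by the Kuratowski embedding, exactly as you note. Your argument is a correct, self-contained proof and is in the same spirit as the proofs in those references: one computes the one-sided directional limits against the candidate seminorm $\sigma_x(v)=\sup_i|\nabla f_i(x)\cdot v|$ (lower bound componentwise, upper bound via the integration-along-lines estimate $|f_i(x+tv)-f_i(x)|\le\int_0^t\sigma_{x+sv}(v)\,ds$ followed by Lebesgue differentiation at $s=0$), passes to a countable dense set of directions and extends by $L$-Lipschitz dependence of the quotient on $v$, and finally upgrades directional to genuine metric differentiability using the uniform (in $t$) Lipschitz modulus of continuity in the angular variable together with compactness of $\bbbs^{n-1}$. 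This last step is the one that silently uses that $\sigma_x$ is continuous, which it is since it is a seminorm dominated by $L|\cdot|$; without continuity the subsequence argument would not close.

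Two points are worth spelling out in a written version. First, in the upper-bound step the integrand $s\mapsto\sigma_{x+sv}(v)$ is defined only a.e.\ and is a countable supremum of the measurable functions $s\mapsto|\nabla f_i(x+sv)\cdot v|$, bounded by $L|v|$; this is what licenses the Lebesgue differentiation step, and the Fubini argument must be applied to the union of the (countably many) Rademacher null sets so that, for a.e.\ $x$ on a transversal hyperplane and a.e.\ $s$, \emph{all} $f_i$ are differentiable at $x+sv$, ensuring $\frac{d}{ds}f_i(x+sv)=\nabla f_i(x+sv)\cdot v$ simultaneously in $i$. Second, one cannot hope for the identity $\lim_{t\to 0}\|f(x+tv)-f(x)\|_\infty/|t|=\sigma_x(v)$ pointwise everywhere --- scalar examples such as $f(x,y)=x^2y/(x^2+y^2)$ show the directional limit at a specific point can differ from $|\nabla f(x)\cdot v|$ --- so the a.e.\ qualifier produced by your Fubini/Lebesgue-point argument is essential, not a technicality.
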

Note that by Rademacher's theorem and the fact that countable union of null sets is a null set, at a.e.\ $x$ all $\nabla f_i(x)$ exist. If $f$ is as in Lemma~\ref{T6}, then we define the {\em componentwise derivative} of $f$ to be the $\infty \times n$ matrix
$$
Df(x) := 
\left \lceil
\begin{array}{c}
\nabla f_1(x) \\
\nabla f_2(x) \\
\vdots
\end{array}
\right \rceil.
$$
The next result is an easy exercise in linear algebra.
\begin{lemma}
\label{T7}
If $f=(f_1,f_2,\ldots):\bbbr^n\supset\Omega\to\ell^\infty$ is Lipschitz continuous, then for almost all $x\in\Omega$, the row rank of $Df(x)$ equals the column rank of $Df(x)$ and they equal $ \rank\md (f,x). $
\end{lemma}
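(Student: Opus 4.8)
The plan is to argue pointwise on a full-measure subset of $\Omega$ where all relevant a.e.\ statements hold simultaneously, and then reduce everything to the classical rank--nullity identity applied to a linear map out of $\bbbr^n$. First I would fix a full-measure set $\Omega_0\subset\Omega$ on which, at the same time: every gradient $\nabla f_i(x)$ exists (each exceptional set is null by Rademacher, and a countable union of null sets is null); the Lipschitz bound $|\nabla f_i(x)|\le L$ holds, so that every row of $Df(x)$ lies in $\bbbr^n$ and every column lies in $\ell^\infty$ (this is what makes ``column rank'' meaningful); and the formula $\md(f,x)(v)=\sup_{i}|\nabla f_i(x)\cdot v|$ of Lemma~\ref{T6} is valid. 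Fix $x\in\Omega_0$ and let $R\subset\bbbr^n$ be the linear span of the rows $\nabla f_1(x),\nabla f_2(x),\ldots$, so that the row rank of $Df(x)$ is $\dim R$.

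Next I would introduce the linear map $T\colon\bbbr^n\to\ell^\infty$, $Tv=Df(x)v=(\nabla f_i(x)\cdot v)_i$. Its image is exactly the span of the columns of $Df(x)$, so the column rank of $Df(x)$ equals $\dim\operatorname{im}T$ (a well-defined integer $\le n$, since $\operatorname{im}T$ is a finite-dimensional subspace of $\ell^\infty$). The key observation is the triple identification of the kernel: for $v\in\bbbr^n$,
$$
v\in\ker T
\iff \nabla f_i(x)\cdot v=0 \text{ for all } i
\iff v\perp R
\iff \sup_i|\nabla f_i(x)\cdot v|=0
\iff \md(f,x)(v)=0,
$$
where the last equivalence uses the Lemma~\ref{T6} formula, valid on $\Omega_0$. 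Hence $\ker T=R^\perp=N_{\md(f,x)}$.

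Finally I would apply rank--nullity to $T$ — legitimate precisely because the domain $\bbbr^n$ is finite-dimensional, so no subtlety arises from $\ell^\infty$ being infinite-dimensional — to get $\dim\operatorname{im}T=n-\dim\ker T=n-\dim R^\perp=\dim R$. Combining, the column rank $\dim\operatorname{im}T$ equals the row rank $\dim R$, and moreover $\rank\md(f,x)=n-\dim N_{\md(f,x)}=n-\dim\ker T=\dim\operatorname{im}T$, so all three quantities coincide. Since this holds for every $x\in\Omega_0$ and $\Omega\setminus\Omega_0$ is null, the lemma follows. I do not expect a genuine obstacle here; the only points requiring care are pooling the various a.e.\ statements onto a single full-measure set before reasoning pointwise, and resisting the temptation to invoke a ``rank--nullity for infinite matrices'' rather than applying the finite-dimensional statement to the map $T$.
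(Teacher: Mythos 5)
Your proof is correct and is exactly the routine linear-algebra argument that the paper has in mind when it labels the lemma ``an easy exercise'' and gives no proof: introduce the linear map $T=Df(x)\colon\bbbr^n\to\ell^\infty$ on a common full-measure set where Lemma~\ref{T6} holds, identify $\ker T=R^\perp=N_{\md(f,x)}$, and conclude by rank--nullity on the finite-dimensional domain. Your side remarks (pooling the a.e.\ conditions before arguing pointwise, and avoiding any appeal to ``rank--nullity for infinite matrices'') are precisely the points where a careless writeup could go wrong, so this is a clean and complete argument.
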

In fact a stronger version of Lemma~\ref{T4} is proved in Kirchheim \cite{kir}:
\begin{lemma}
\label{elm7}
If $f:\bbbr^n\supset\Omega\to X$ is Lipschitz, then for almost all $x\in\Omega$, we have
\begin{equation}
\label{Elq15}
\lim_{\bbbr^n\times\bbbr^n\ni (y,z)\to (x,x)} 
\frac{d(f(y),f(z))-\md(f,x)(y-z)}{|x-y|+|x-z|}=0.
\end{equation}
\end{lemma}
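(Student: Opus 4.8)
The plan is to derive the two-point estimate \eqref{Elq15} from the one-point metric differentiability of Lemma~\ref{T4} by an Egorov--Lusin argument together with the Lebesgue density theorem; no other ingredients are needed (in particular neither the embedding into $\ell^\infty$ nor the componentwise formula of Lemma~\ref{T6} enters here). Fix a subset $\Omega'\Subset\Omega$ of finite measure — it is enough to handle one such piece at a time. By Lemma~\ref{T4} the seminorm $\sigma_x:=\md(f,x)$ exists on a full-measure set $\Omega_0\subset\Omega'$, and by \eqref{eq10} the function $x\mapsto\sigma_x(v)$ is, for each fixed $v$, a pointwise limit of functions continuous in $x$, hence Borel measurable on $\Omega_0$. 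Since $f$ is $L$-Lipschitz, every $\sigma_x$ is an $L$-Lipschitz seminorm, so $x\mapsto\sigma_x$ is a Borel map into the separable metric space of $L$-Lipschitz seminorms on $\bbbr^n$ equipped with the sup-norm over $\bbbs^{n-1}$. I would then record: (i) the functions $g_k(x):=\sup_{0<|h|\le 1/k}|h|^{-1}\,|d(f(x+h),f(x))-\sigma_x(h)|$ are measurable and decrease to $0$ on $\Omega_0$, so by Egorov's theorem, for each $\eta>0$ there is a set $A_\eta$ with $|\Omega'\setminus A_\eta|<\eta$ on which $g_k\to0$ uniformly; and (ii) by Lusin's theorem there is a compact set $K\subset A_\eta$ with $|A_\eta\setminus K|$ arbitrarily small on which $x\mapsto\sigma_x$ is uniformly continuous. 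Letting $\eta\to 0$ and taking a countable union of such $K$'s, I obtain a full-measure subset $G\subset\Omega$ each of whose points lies in, and is a Lebesgue density point of, one of these sets $K$. It then suffices to verify \eqref{Elq15} at each point of $G$.

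So fix $x\in G$ and the corresponding $K$, and fix $\eps\in(0,1)$. Uniform convergence in (i) gives $t_0>0$ with $|d(f(p+h),f(p))-\sigma_p(h)|\le\eps|h|$ for all $p\in K$ and $0<|h|\le t_0$; uniform continuity in (ii) gives $\delta_0>0$ with $\sup_{v\in\bbbs^{n-1}}|\sigma_p(v)-\sigma_q(v)|\le\eps$ whenever $p,q\in K$ and $|p-q|\le\delta_0$; and the density property gives $r_0>0$ with $|B(x,2r)\setminus K|<\omega_n(\eps r)^n$ for $0<r\le r_0$. Take $y,z\in B(x,r_1)$ with $r_1$ small relative to $r_0,t_0,\delta_0$, and set $\rho:=|x-y|+|x-z|$; the case $\rho=0$, and the cases $y=x$ or $z=x$, are immediate from the first displayed inequality applied at $p=x$, so assume $\rho>0$ and $y,z\ne x$. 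Since $B(y,\eps\rho)$ and $B(z,\eps\rho)$ both lie in $B(x,2\rho)$, the density bound forces each of them to intersect $K$ (were $B(y,\eps\rho)$ disjoint from $K$ it would sit inside $B(x,2\rho)\setminus K$, and $\omega_n(\eps\rho)^n\le|B(x,2\rho)\setminus K|<\omega_n(\eps\rho)^n$ is absurd), so I may pick $y',z'\in K\cap B(x,2\rho)$ with $|y-y'|\le\eps\rho$ and $|z-z'|\le\eps\rho$.

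The desired bound is then a four-term triangle inequality. Since $f$ and $\sigma_x$ are $L$-Lipschitz, $|d(f(y),f(z))-d(f(y'),f(z'))|\le 2L\eps\rho$ and $|\sigma_x(y-z)-\sigma_x(y'-z')|\le 2L\eps\rho$. Applying the first displayed inequality with $p=z'\in K$ and $h=y'-z'$ — legitimate because $|y'-z'|\le(1+2\eps)\rho\le t_0$ — gives $|d(f(y'),f(z'))-\sigma_{z'}(y'-z')|\le\eps|y'-z'|\le 3\eps\rho$. Applying uniform continuity with $p=z'$, $q=x$ — legitimate because $|z'-x|\le 2\rho\le\delta_0$ — gives $|\sigma_{z'}(y'-z')-\sigma_x(y'-z')|\le\eps|y'-z'|\le 3\eps\rho$. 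Adding these four estimates, $|d(f(y),f(z))-\sigma_x(y-z)|\le(4L+6)\eps\rho$, so the quotient in \eqref{Elq15} is at most $(4L+6)\eps$ on all of $B(x,r_1)$; letting $\eps\to 0$ completes the verification at $x$, hence the proof.

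The only genuine obstacle is the two-parameter uniformity packaged in step (i): bare metric differentiability provides, at each $x$ separately, a modulus of convergence depending on $x$, whereas what the argument needs is a modulus valid simultaneously over a set of almost full measure — this is exactly what Egorov's theorem supplies. That uniformity is what permits the crucial move, namely replacing the uncontrolled pair $(y,z)$ by a nearby ``good'' pair $(y',z')\in K\times K$ at the price of only an $L$-Lipschitz error, and then reading $d(f(y'),f(z'))$ off from metric differentiability at the single point $z'$. The remaining ingredients — Borel measurability of $x\mapsto\sigma_x$, the density estimate, and the selection of $y',z'$ — are routine.
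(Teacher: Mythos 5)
The paper does not prove Lemma~\ref{elm7}; it attributes the two--point estimate directly to Kirchheim's article and uses it as a black box, so there is no in-paper argument to compare your proposal against. Read as a standalone derivation, your proof is correct. The plan---use Egorov to turn the one-point estimate of Lemma~\ref{T4} into a uniform modulus $|d(f(p+h),f(p))-\sigma_p(h)|\le\eps|h|$ on a compact $K$ of almost-full measure, use Lusin to make $p\mapsto\sigma_p=\md(f,p)$ uniformly continuous on $K$ (valid since the target, the compact metric space of $L$-Lipschitz seminorms with the sup norm over $\bbbs^{n-1}$, is separable), and then at a Lebesgue density point of $K$ replace $(y,z)$ by a nearby pair $(y',z')\in K\times K$ at an $L$-Lipschitz cost in both $f$ and $\sigma_x$---is sound; the four-term telescoping with the constant $(4L+6)\eps$ is arithmetically correct, the smallness bookkeeping ($\rho\le r_0$, $(1+2\eps)\rho\le t_0$, $2\rho\le\delta_0$) works out, the degenerate cases $y=x$, $z=x$, $y'=z'$ are handled, and the reduction to $\Omega'\Subset\Omega$ plus exhaustion is standard.

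The one structural point worth flagging, which you note yourself, is that your argument treats Lemma~\ref{T4} as a black box and never passes through the Kuratowski--Fr\'echet embedding into $\ell^\infty$ or the componentwise description of $\md(f,x)$ from Lemma~\ref{T6}; Kirchheim's original proof of this two-point statement is organized around that embedding. Your route buys a cleaner, coordinate-free presentation, and the price is the need for measurability of $x\mapsto\md(f,x)$ as a map into a function space---which you verify correctly by reducing the sup over $\bbbs^{n-1}$ to a countable dense subset using the common $L$-Lipschitz bound on all the seminorms.
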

\begin{remark}
Taking $y=x+tv$, $z=x+tw$, \eqref{Elq15} yields that for any $v,w\in\bbbr^n$
$$
\lim_{t\to 0} \frac{d(f(x+tv),f(x+tw))}{|t|}=\md(f,x)(v-w)
$$
which is a much stronger claim than the existence of the ``directional speed'' \eqref{eq10}.
\end{remark}
Now we use Lemma~\ref{elm7} to prove a result about covering of the image of a ball.
\begin{proposition}
\label{Tl14}
Let $f:\bbbr^n\supset\Omega\to X$ be $L$-Lipschitz. Let
$$
E_k=\{ x\in\Omega:\, \rank\md(f,x)=k \},
\quad
0\leq k\leq n.
$$
Then almost all points $x\in E_k$ have the following property:
For every integer $m\geq 1$, there exists a $r_{x,m}>0$ such that for all $0<r<r_{x,m}$, $f(B(x,r))$ can be covered by $m^k$ balls, each of radius
$3\sqrt{k}Lr/m$, in the case of $k>0$, and by one ball of radius $r/m$ in the case $k=0$. 
\end{proposition}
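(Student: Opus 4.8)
The plan is to exploit the two-point metric differentiability of Lemma~\ref{elm7}, which lets us ``collapse'' the degenerate directions of the seminorm $\sigma_x:=\md(f,x)$ and thereby reduce a covering of $f(B(x,r))$ to a covering of a $k$-dimensional Euclidean ball. Fix a point $x\in E_k$ at which \eqref{Elq15} holds; by Lemma~\ref{elm7} this excludes only a null subset of $E_k$. Let $\rho=\dist(x,\bbbr^n\setminus\Omega)>0$ (with $\rho=+\infty$ if $\Omega=\bbbr^n$), so $B(x,\rho)\subset\Omega$, and decompose $\bbbr^n=V\oplus W$ with $V=N_{\sigma_x}^\perp$ of dimension $k$ and $W=N_{\sigma_x}$ of dimension $n-k$. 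Thus $\sigma_x$ vanishes on $W$, and $\sigma_x(u)\le L|u|$ for all $u\in\bbbr^n$ because $f$ is $L$-Lipschitz.

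Consider first $k\geq 1$. Given $m\geq 1$, I would apply \eqref{Elq15} with $\eps=\sqrt{k}L/(4m)$ to obtain $\delta>0$ such that $d(f(a),f(b))\le\sigma_x(a-b)+\eps(|a-x|+|b-x|)$ whenever $|a-x|+|b-x|<\delta$, and set $r_{x,m}=\min\{\delta/2,\rho\}$. Fix $0<r<r_{x,m}$. Identifying $V$ with $\bbbr^k$, cover $\{v\in V:|v|<r\}$ by the $m^k$ closed subcubes of side $2r/m$ obtained by subdividing $[-r,r]^k$; each has diameter $2\sqrt{k}r/m$. For $y\in B(x,r)$ write $y-x=v+w$ with $v\in V$, $w\in W$; then $|v|\le|y-x|<r<\rho$, so $x+v\in\Omega$ and $v$ lies in one of the subcubes. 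Since $y-(x+v)=w\in W=N_{\sigma_x}$ we get $d(f(y),f(x+v))\le\eps(|y-x|+|v|)\le 2\eps r$. Hence if $y,y'\in B(x,r)$ have $V$-components $v,v'$ in the same subcube, then
\[
d(f(y),f(y'))\le d(f(y),f(x+v))+L|v-v'|+d(f(x+v'),f(y'))\le 4\eps r+\frac{2\sqrt{k}Lr}{m}=\frac{3\sqrt{k}Lr}{m}.
\]
Grouping the points of $B(x,r)$ by which subcube their $V$-component falls into writes $f(B(x,r))$ as a union of at most $m^k$ sets of diameter $\le 3\sqrt{k}Lr/m$, each contained in a closed ball of that radius centered at one of its points. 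For $k=0$ one has $\sigma_x\equiv 0$: given $m\geq 1$, apply \eqref{Elq15} with $\eps=1/(2m)$ to get $\delta$, set $r_{x,m}=\min\{\delta/2,\rho\}$, and for $0<r<r_{x,m}$ and $y,y'\in B(x,r)$ obtain $d(f(y),f(y'))\le\eps(|y-x|+|y'-x|)<r/m$, so $f(B(x,r))$ lies in a single closed ball of radius $r/m$.

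The only delicate point is the choice of hypothesis: pointwise metric differentiability of $f$ at $x$ (Lemma~\ref{T4}) is \emph{not} enough here, since it only compares $f(y)$ with $f(x)$, whereas the argument needs to compare $f(y)$ with $f(x+v)$ for two points that are both near $x$ but generally distinct from it. What makes the collapse of the null directions work is precisely the \emph{uniform} two-point estimate valid for all pairs near $x$, i.e. Kirchheim's stronger differentiability statement Lemma~\ref{elm7}; once that is in hand, the remainder is elementary linear algebra (the orthogonal splitting $\bbbr^n=V\oplus W$) and bookkeeping of the constants, as above.
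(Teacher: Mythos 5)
Your proof is correct and takes essentially the same approach as the paper: it applies Kirchheim's stronger two-point differentiability (Lemma~\ref{elm7}) at a.e.\ $x$, orthogonally decomposes $\bbbr^n$ into the null space of $\md(f,x)$ and its complement, grids the $k$-dimensional complement into $m^k$ subcubes of side $2r/m$, and uses the vanishing of the seminorm on the null directions to bound the resulting diameters by $3\sqrt{k}Lr/m$. The only superficial difference is that you estimate $d(f(y),f(y'))$ by passing through the projected points $f(x+v)$, $f(x+v')$, whereas the paper applies the two-point estimate once together with the bound $\md(f,x)(y-z)\le L|\pi(y-z)|$; both routes give the same constant.
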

\begin{remark}
This result is similar to \cite[Lemma~2.7]{hajlaszm}. The approach in \cite{hajlaszm} uses componentwise differentiability instead of metric differentiability and as a result the proofs are different and more difficult.
\end{remark}
\begin{proof}
Assume first that $k>0$.
Let $\tilde{E}_k$ be the set of all points $x\in E_k$ such that \eqref{Elq15} holds.
Clearly, $|E_k\setminus\tilde{E}_k|=0$, and we will show that the property in the statement of the proposition is true for all $x\in\tilde{E}_k$. 
Fix $x\in\tilde{E}_k$, and let
\begin{equation}
\label{Eq32}
N=\{v\in\bbbr^n:\, \md(f,x)(v)=0\}
\quad
\text{so}
\quad
\dim N=n-k.
\end{equation}
By translating and rotating the coordinate system, we may assume that $x=0$ and that
$$
N^\perp=\operatorname{span}\{ e_1,\ldots,e_k\}
\quad
\text{and}
\quad
N=\operatorname{span}\{e_{k+1},\ldots,e_n\}.
$$
Fix any $r>0$.
Note that $B(x,r)=B(0,r)\subset [-r,r]^n=[-r,r]^k\times[-r,r]^{n-k}$.
Given an integer $m\geq 1$, divide the cube $[-r,r]^k$ into a grid of $m^k$ congruent cubes of edge length $2r/m$. Denote them by $\{ Q_\nu\}_{\nu=1}^{m^k}$. Then
$$
B(0,r)\subset \bigcup_{\nu=1}^{m^k} \big(Q_\nu\times [-r,r]^{n-k}\big),
\qquad
f(B(0,r))\subset\bigcup_{\nu=1}^{m^k}f(Q_\nu\times [-r,r]^{n-k}).
$$
Now it suffices to show that there is $r_{x,m}>0$ such that for all $r\in (0,r_{x,m})$ we have
\begin{equation}
\label{Eq29}
\diam(f(Q_\nu\times [-r,r]^{n-k})) \leq 3\sqrt{k}Lr/m.
\end{equation}
In the case of $k=n$, this follows from $\diam(f(Q_\nu)) \leq L\diam(Q_\nu)=2\sqrt{n}Lr/m$. In the cases $0<k<n$, \eqref{Eq29} follows from \eqref{Elq15}. Since $x=0$, \eqref{Elq15} implies that there is $r_{x,m}>0$ such that 
\begin{equation}
\label{Eq28}
|d(f(y),f(z))-\md(f,0)(y-z)|< \frac{\sqrt{k}Lr}{m}
\quad
\text{for all $y,z\in [-r,r]^n$, $0<r<r_{x,m}$.}
\end{equation}
In particular, it is true for $y,z\in Q_\nu\times [-r,r]^{n-k}$.

If $\pi:\bbbr^n\to N^\perp=\bbbr^{k}$ is the orthogonal projection
onto the orthogonal complement of \eqref{Eq32}, then by triangle inequality and the fact that the Lipschitz constant $L$ bounds $\md(f,\cdot)$, $$
\md(f,0)(v)\leq
\md(f,0)(\pi(v))+\underbrace{\md(f,0)(v-\pi(v))}_{0}
\leq L|\pi(v)|.
$$
If $y,z\in Q_\nu\times [-r,r]^{n-k}$, then $\pi(y),\pi(z)\in Q_\nu$ so $|\pi(y-z)|\leq 2\sqrt{k}r/m$, and hence
$$
\md(f,0)(y-z)\leq 2\sqrt{k}Lr/m
$$
which together with \eqref{Eq28} gives
$d(f(y),f(z))< 3\sqrt{k}Lr/m$, and \eqref{Eq29} follows.

Finally, if $k=0$, then $\md(f,x)=0$ and by definition of metric derivative there is $r_{x,m}>0$ such that 
$$
d(f(y),f(x))< \frac{|y-x|}{m}
\quad
\text{for all $y \in B(x,r), 0<r<r_{x,m}$.}
$$
But this shows that $f(B(x,r)) \subset B(f(x),r/m)$. 
\end{proof}

We define the Jacobian of a seminorm $\sigma:\bbbr^n\to [0,\infty)$ by
$$
J_n(\sigma)=\frac{\omega_n}{\H^n(\{x:\, \sigma(x)\leq 1\}}\, .
$$
Note that if $\sigma$ is not a norm (if $\sigma$ vanishes on a non-trivial linear subspace), then the set in the denominator is unbounded and it has infinite measure, so $J_n(\sigma)=0$ in that case. 


Kirchheim \cite{kir} proved the following important generalization of the classical area formula.
\begin{lemma}[Kirchheim]
\label{T8}
Let $f:\Omega\to X$ be a Lipschitz mapping from an open set $\Omega\subset\bbbr^n$ to a metric space $X$. Then
\begin{equation}
\label{eq14}
\int_\Omega g(x)J_n(\md (f,x))\, d\H^n(x)=\int_{f(\Omega)}\Big(\sum_{x\in f^{-1}(y)}g(x)\Big)\, d\H^n(y)
\end{equation}
for any Borel function $g:\Omega\to [0,\infty]$.
In particular
\begin{equation}
\label{eq15}
\int_E g(f(x)) J_n(\md (f,x))\, d\H^n(x)=\int_X g(y)\, \H^0(E\cap f^{-1}(y))\, d\H^n(y)
\end{equation}
for any Borel set $E\subset\Omega$ and any Borel function $g:X\to [0,\infty]$.
\end{lemma}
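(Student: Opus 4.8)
The plan is to reduce to the case $X=\ell^\infty$ by Lemma~\ref{T5} (using that $f(\Omega)$ is separable), then to discard the set where the metric derivative degenerates and, on the rest, perform a Lusin-type decomposition into countably many pieces on which $f$ is bi-Lipschitz, with distortion arbitrarily close to $1$, relative to a \emph{fixed} norm; on such pieces the identity reduces to how Hausdorff measure transforms under bi-Lipschitz maps. To begin, let $\Omega_0$ be the union of the null set where $f$ is not metrically differentiable (Lemma~\ref{T4}) and the set $\{x:\rank\md(f,x)\le n-1\}$. On $\Omega_0$ we have $J_n(\md(f,x))=0$ a.e.\ by the definition of $J_n$, so the left side of \eqref{eq14} is unaffected by replacing $\Omega$ with $\Omega_1:=\Omega\setminus\Omega_0$. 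On each $E_k=\{\rank\md(f,\cdot)=k\}$ with $k\le n-1$, Proposition~\ref{Tl14} covers $f(B(x,r))$ by $m^k$ balls of radius $3\sqrt{k}\,Lr/m$ (or one ball of radius $r/m$ when $k=0$), whence $\H^n_\infty(f(B(x,r)))\lesssim m^{\,k-n}r^n$; a Vitali covering argument with $m\to\infty$ gives $\H^n(f(E_k))=0$, and since a Lipschitz map sends $\H^n$-null sets to $\H^n$-null sets, $\H^n(f(\Omega_0))=0$. Therefore $f^{-1}(y)\cap\Omega_0=\emptyset$ for $\H^n$-a.e.\ $y$, so the right side of \eqref{eq14} is also unaffected, and it suffices to prove \eqref{eq14} with $\Omega_1$ in place of $\Omega$.

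For the decomposition, fix $\eps>0$ and, after exhausting $\Omega_1$ up to a null set by bounded Borel sets on which $c|v|\le\md(f,x)(v)\le L|v|$ for a fixed $c>0$, use that the set of norms $\sigma$ on $\bbbr^n$ with $c|v|\le\sigma(v)\le L|v|$ is compact in the uniform metric (Arzel\`a--Ascoli); covering it by finitely many small balls and partitioning according to which center $\md(f,x)$ is closest to, I may assume $\md(f,\cdot)$ is $\eps$-close to one fixed norm on each part. On such a part, applying Egorov's theorem to $x\mapsto\sup\{|d(f(y),f(x))-\md(f,x)(y-x)|/|y-x|:0<|y-x|<\delta\}$ --- which tends to $0$ a.e.\ as $\delta\to0$ by metric differentiability --- and then subdividing into sets of small enough diameter yields, up to a null set, a countable pairwise disjoint family $\{F_j\}$ covering $\Omega_1$, with associated norms $\sigma_j$, such that $f|_{F_j}$ is injective and, for a dimensional constant $C$,
\[
(1+C\eps)^{-1}\sigma_j(x-y)\le d(f(x),f(y))\le(1+C\eps)\,\sigma_j(x-y)\qquad\text{for all }x,y\in F_j.
\]

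To assemble the identity, recall that $\H^n$ on $(\bbbr^n,\sigma_j)$ equals $J_n(\sigma_j)$ times Lebesgue measure (the isodiametric inequality in Minkowski spaces, via Brunn--Minkowski). Combined with the displayed bound, for every Borel $A\subset F_j$,
\[
(1+C\eps)^{-n}J_n(\sigma_j)\,\H^n(A)\le\H^n(f(A))\le(1+C\eps)^{n}J_n(\sigma_j)\,\H^n(A),
\]
and since $\md(f,x)$ is $\eps$-close to $\sigma_j$ on $F_j$, the monotonicity $J_n(\sigma)\le(1+t)^nJ_n(\sigma')$ whenever $\sigma\le(1+t)\sigma'$ shows that $J_n(\sigma_j)$ and $J_n(\md(f,x))$ agree up to a factor $(1+C\eps)^{\pm n}$ there. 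As $f|_{F_j}$ is an injective bi-Lipschitz embedding, $A\mapsto\H^n(f(A))$ is a measure on $F_j$ mutually absolutely continuous with Lebesgue measure, so $\int_{f(F_j)}g((f|_{F_j})^{-1}(y))\,d\H^n(y)=\int_{F_j}g(x)\rho_j(x)\,d\H^n(x)$ for Borel $g\ge0$, with $\rho_j$ within a factor $(1+C\eps)^{\pm2n}$ of $J_n(\md(f,x))$. Summing over $j$ and using that the $F_j$ partition $\Omega_1$ up to a null set, we obtain, writing $I:=\int_{\Omega_1}g\,J_n(\md(f,x))\,d\H^n(x)$,
\[
(1+C\eps)^{-2n}I\le\int_{f(\Omega_1)}\Bigl(\sum_{x\in f^{-1}(y)\cap\Omega_1}g(x)\Bigr)\,d\H^n(y)\le(1+C\eps)^{2n}I,
\]
and letting $\eps\to0$ proves \eqref{eq14} on $\Omega_1$, hence on $\Omega$. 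Finally, \eqref{eq15} follows by applying \eqref{eq14} to the Borel function $x\mapsto g(f(x))\chi_E(x)$, since $\sum_{x\in f^{-1}(y)}g(f(x))\chi_E(x)=g(y)\,\H^0(E\cap f^{-1}(y))$.

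The step I expect to be the main obstacle is the decomposition in the second paragraph: upgrading the merely pointwise metric differentiability to a \emph{single} countable family of pieces on which $f$ is simultaneously bi-Lipschitz with respect to fixed norms and with uniformly small distortion; this is exactly where compactness of the space of norms and Egorov's theorem are indispensable. A second nontrivial ingredient is the computation of Hausdorff $n$-measure on an $n$-dimensional normed space. Beyond these, one must also justify the measurability of the multiplicity $y\mapsto\H^0(f^{-1}(y)\cap E)$ and of the various integrands, which I would handle exactly as in the classical proof of the area formula for Lipschitz maps into $\bbbr^n$.
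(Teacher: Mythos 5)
The paper does not prove Lemma~\ref{T8}: it is stated as a known result and attributed to Kirchheim \cite{kir} (see also \cite{ambrosiok}), so there is no internal proof against which to compare your attempt. Evaluated on its own terms, your argument is essentially Kirchheim's original proof, and it is correct modulo routine details. The reduction to $\ell^\infty$ via Lemma~\ref{T5}, the observation that $J_n(\md(f,\cdot))=0$ on the degenerate set $\Omega_0$, the covering argument (via Proposition~\ref{Tl14} and Vitali) showing $\H^n(f(\Omega_0))=0$, and the Lusin-/Federer-type decomposition of $\Omega_1$ into countably many pieces on which $f$ is bi-Lipschitz with distortion $1+O(\eps)$ relative to a fixed norm are exactly the standard ingredients. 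Your use of Proposition~\ref{Tl14} is legitimate (it rests on Lemma~\ref{elm7}, not on Lemma~\ref{T8}, so there is no circularity). Two points you flag as the main obstacles are indeed where the real work lies: (i) the bi-Lipschitz decomposition requires iterating Egorov's theorem so that the exceptional set is genuinely null rather than merely small — you correctly assert ``up to a null set,'' but the iteration should be spelled out; and (ii) the identity $\H^n$ on $(\bbbr^n,\sigma)$ equals $J_n(\sigma)\,\mathcal{L}^n$ needs Busemann's isodiametric inequality in normed spaces for the lower bound. The monotonicity $J_n(\sigma)\le(1+t)^nJ_n(\sigma')$ for $\sigma\le(1+t)\sigma'$ and the final assembly by monotone convergence over the $F_j$'s are both correct, as is the derivation of \eqref{eq15} from \eqref{eq14}. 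The remaining measurability claims (of the multiplicity function, of $f(A)$ for Borel $A$) are indeed handled as in the classical area formula, e.g.\ via $\sigma$-compactness of $\Omega$. In short, your proof reconstructs the cited argument faithfully.
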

\begin{corollary}
\label{T9}
Let $f:\Omega\to X$ be a Lipschitz mapping from an open set $\Omega\subset\bbbr^n$ to a metric space $X$. Then $\H^n(f(\Omega))>0$ if and only if $\rank\md(f,x)=n$ (i.e., $\sigma$ is a norm) on a set of positive measure.
\end{corollary}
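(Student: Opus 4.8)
The plan is to read the corollary off the Kirchheim area formula, Lemma~\ref{T8}, together with the remark recorded just before it that $J_n(\sigma)>0$ precisely when $\sigma$ is a norm, i.e.\ $\rank\sigma=n$, and $J_n(\sigma)=0$ otherwise. It is convenient to note first that since $f$ is $L$-Lipschitz we have $\md(f,x)(v)\le L|v|$ for a.e.\ $x$, hence $\{v:\md(f,x)(v)\le 1\}\supset\bar{B}(0,1/L)$, and therefore $0\le J_n(\md(f,x))\le L^n$ a.e.; in particular $x\mapsto J_n(\md(f,x))$ is a bounded function.

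For the ``only if'' implication I would argue contrapositively. Suppose $\rank\md(f,x)<n$ for a.e.\ $x\in\Omega$. Then $J_n(\md(f,x))=0$ a.e., so applying \eqref{eq15} with the Borel set $E=\Omega$ and $g\equiv 1$ gives
$$
0=\int_\Omega J_n(\md(f,x))\,d\H^n(x)=\int_X\H^0(f^{-1}(y))\,d\H^n(y).
$$
Since $\H^0(f^{-1}(y))\ge 1$ for every $y\in f(\Omega)$, the vanishing of the right-hand side, combined with Borel regularity of $\H^n$ (Lemma~\ref{T35}), forces $\H^n(f(\Omega))=0$.

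For the ``if'' implication, assume that $A=\{x\in\Omega:\rank\md(f,x)=n\}$ has positive measure. Regarding $f(\Omega)$ as a subset of $\ell^\infty$ via Lemma~\ref{T5} and then using Lemmas~\ref{T6} and~\ref{T7}, the number $\rank\md(f,x)$ equals the rank of the componentwise derivative matrix whose rows $\nabla f_i(x)$ are measurable, so $A$ is Lebesgue measurable; by inner regularity choose a Borel set $\tilde A\subset A$ with $\H^n(\tilde A)=\H^n(A)>0$. On $\tilde A$ we have $J_n(\md(f,x))>0$, and by the bound above the integral $\int_{\tilde A}J_n(\md(f,x))\,d\H^n(x)$ is finite and strictly positive. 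Applying \eqref{eq15} with $E=\tilde A$ and $g\equiv 1$ yields
$$
0<\int_X\H^0(\tilde A\cap f^{-1}(y))\,d\H^n(y),
$$
so the set $\{y:\tilde A\cap f^{-1}(y)\neq\emptyset\}=f(\tilde A)$ is not $\H^n$-null, and since $f(\tilde A)\subset f(\Omega)$ we conclude $\H^n(f(\Omega))>0$. The only point that requires a word of care is the measurability of $A$ and the passage to a Borel subset so that \eqref{eq15} applies verbatim; beyond that the argument is a direct substitution into the area formula, and I anticipate no genuine obstacle.
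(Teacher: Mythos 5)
Your proof is correct and follows the route the paper intends: the corollary is read off directly from the Kirchheim area formula (Lemma~\ref{T8}, specifically \eqref{eq15}) together with the observation that $J_n(\sigma)>0$ if and only if $\sigma$ is a norm, i.e.\ $\rank\sigma=n$. The paper records Corollary~\ref{T9} as an immediate consequence of Lemma~\ref{T8} with no written proof, and your argument supplies exactly the expected details (the bound $J_n(\md(f,x))\le L^n$, measurability of the full-rank set via Lemmas~\ref{T5}--\ref{T7}, and passage to a Borel subset).
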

\begin{corollary}
\label{T10}
If $f=(f_1,f_2,\ldots):\bbbr^n\supset\Omega\to\ell^\infty$ is Lipschitz continuous, then 
$\H^n(f(\Omega))>0$ if and only if there is a set $E\subset\Omega$ of positive measure and indices $i_1<i_2<\ldots<i_n$ such that
$$
\det\left[\frac{\partial f_{i_k}(x)}{\partial x_\ell}\right]_{1\leq k,\ell\leq n}\neq 0
\quad 
\text{for all $x\in E$.}
$$
\end{corollary}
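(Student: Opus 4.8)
The plan is to reduce everything to Corollary~\ref{T9} via the linear-algebra identification in Lemma~\ref{T7}, the only additional ingredient being a countable exhaustion argument over the choices of $n$ rows. First I would set up measurability: by Rademacher's theorem each partial derivative $\partial f_i/\partial x_\ell$ is defined almost everywhere and is a measurable function on $\Omega$, so for every increasing tuple $I=(i_1<\dots<i_n)$ of positive integers the function $x\mapsto\det\big[\partial f_{i_k}(x)/\partial x_\ell\big]_{1\le k,\ell\le n}$ is measurable wherever defined, and hence the set
$$
A_I=\Big\{x\in\Omega:\ \det\big[\partial f_{i_k}(x)/\partial x_\ell\big]_{1\le k,\ell\le n}\neq 0\Big\}
$$
is measurable. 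I also fix a full-measure set $\Omega_0\subset\Omega$ on which the conclusion of Lemma~\ref{T7} holds, i.e.\ on which the column rank of the $\infty\times n$ matrix $Df(x)$ equals $\rank\md(f,x)$.

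For the forward implication, assume $\mathcal{H}^n(f(\Omega))>0$. By Corollary~\ref{T9} the set $G=\{x:\rank\md(f,x)=n\}$ has positive measure, so $G\cap\Omega_0$ has positive measure and at each of its points the $\infty\times n$ matrix $Df(x)$ has column rank $n$. A matrix with $n$ columns has column rank $n$ exactly when some $n$ of its rows form an invertible $n\times n$ block, so every $x\in G\cap\Omega_0$ belongs to $A_I$ for at least one tuple $I$; that is, $G\cap\Omega_0\subset\bigcup_I A_I$, a countable union. By countable subadditivity some $A_I$ has positive measure, and taking $E=A_I\cap G\cap\Omega_0$ (still of positive measure, for suitable $I$) gives a set on which the required $n\times n$ minor is nonvanishing.

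For the converse, suppose $E\subset\Omega$ has positive measure and $I=(i_1<\dots<i_n)$ satisfies $\det\big[\partial f_{i_k}(x)/\partial x_\ell\big]_{1\le k,\ell\le n}\neq 0$ for all $x\in E$. Then for every $x\in E$ the matrix $Df(x)$ contains an invertible $n\times n$ submatrix, so its column rank equals $n$ (it cannot exceed $n$, since $Df(x)$ has only $n$ columns). By Lemma~\ref{T7}, $\rank\md(f,x)=n$ for almost every $x\in E$, hence on a set of positive measure, and Corollary~\ref{T9} then yields $\mathcal{H}^n(f(\Omega))>0$. The whole argument is essentially bookkeeping; the only points needing a moment's care are the measurability of the sets $A_I$ and the passage, off the null set $\Omega\setminus\Omega_0$, between the intrinsic condition $\rank\md(f,x)=n$ and the existence of a nonzero $n\times n$ minor of the componentwise derivative, so I do not anticipate any genuine obstacle.
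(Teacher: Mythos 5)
Your argument is correct and is exactly the route the paper takes: the paper states in one line that Corollary~\ref{T10} follows from Corollary~\ref{T9} together with Lemma~\ref{T7}, and your write-up simply spells out the linear-algebra bookkeeping (column rank $=n$ iff some $n\times n$ minor is invertible) and the countable-union argument over index tuples that the paper leaves implicit.
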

Corollary~\ref{T10} follows from Corollary~\ref{T9} and Lemma~\ref{T7}.
For a direct proof of Corollary~\ref{T10} that does not use Kirchheim's theorems, see \cite[Theorem~2.2]{hajlaszm}.

The next three lemmata will be used in the proofs of Proposition~\ref{T28}, Theorem~\ref{T16} and Theorem~\ref{TM7}.
\begin{lemma}
\label{T29}
Suppose that $g:\bbbr^m\to\bbbr^n$ is any mapping and that $f:\bbbr^n\to\bbbr^N$ is Lipschitz continuous. If $g$ and $f\circ g$ are differentiable at $x\in\bbbr^m$, then 
$\rank D(f\circ g)(x)\leq \rank Dg(x)$.
\end{lemma}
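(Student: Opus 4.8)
The plan is to bypass the chain rule entirely, since $f$ is \emph{not} assumed to be differentiable at the point $g(x)$ — the only thing we know about $f$ there is that it is Lipschitz. The key observation is that differentiability of $f\circ g$ at $x$, combined with the Lipschitz bound on $f$, forces
$$
\ker Dg(x)\subseteq\ker D(f\circ g)(x),
$$
and once this inclusion is established the rank inequality drops out of linear algebra.

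First I would fix $v\in\bbbr^m$ with $Dg(x)v=0$ and show that $D(f\circ g)(x)v=0$. Since $g$ is differentiable at $x$, we have $g(x+tv)-g(x)=t\,Dg(x)v+o(t)=o(t)$ as $t\to 0$. If $f$ is $L$-Lipschitz, then $|f(g(x+tv))-f(g(x))|\le L\,|g(x+tv)-g(x)|=o(t)$, and hence
$$
D(f\circ g)(x)v=\lim_{t\to 0}\frac{(f\circ g)(x+tv)-(f\circ g)(x)}{t}=0,
$$
the limit existing by the assumed differentiability of $f\circ g$ at $x$. This proves the claimed inclusion of kernels. Now set $r=\rank Dg(x)$, so that $\ker Dg(x)$ has dimension $m-r$; by the inclusion just proved, $D(f\circ g)(x)$ vanishes on an $(m-r)$-dimensional subspace, so it descends to a linear map on the quotient $\bbbr^m/\ker Dg(x)$, which has dimension $r$. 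Therefore $\rank D(f\circ g)(x)\le r=\rank Dg(x)$.

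Since the argument is this short, there is no genuine obstacle; the one point that requires care is resisting the reflex to write $D(f\circ g)(x)=Df(g(x))\,Dg(x)$, which is unavailable here because $f$ need not be differentiable at $g(x)$. The Lipschitz hypothesis on $f$ is precisely what substitutes for that missing derivative, turning the estimate $g(x+tv)-g(x)=o(t)$ into $(f\circ g)(x+tv)-(f\circ g)(x)=o(t)$.
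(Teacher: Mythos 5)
Your argument is correct and is essentially the same as the paper's: the paper notes that $|D_v(f\circ g)(x)|\le L\,|D_v g(x)|$ for directional derivatives, which immediately yields $\ker Dg(x)\subset\ker D(f\circ g)(x)$ and hence the rank inequality. You have simply unwound this one-line estimate into the explicit $o(t)$ computation in the special case $Dg(x)v=0$, but the idea — use Lipschitzness of $f$ to control directional increments of $f\circ g$ in place of the unavailable chain rule — is identical.
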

Indeed, if $L$ is the Lipschitz constant of $f$, then the directional derivatives of $f\circ g$ satisfy $|D_v(f\circ g)(x)|\leq L|D_vg(x)|$ and hence $\ker Dg(x)\subset\ker D(f\circ g)(x)$.

The lemma easily generalizes to the case of the metric derivative
\begin{lemma}
\label{T32}
If $X$, $Y$ are metric spaces and $g:\bbbr^n\supset\Omega\to X$, $f:X\to Y$ are Lipschitz mappings, then
$\rank\md(f\circ g,x)\leq\rank\md(g,x)$ for almost all $x\in\Omega$.
\end{lemma}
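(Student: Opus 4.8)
The plan is to reduce the statement to the Euclidean case handled by Lemma~\ref{T29} by using the componentwise-derivative machinery of this section. First I would observe that both sides of the inequality only depend on $f$ restricted to the separable set $g(\Omega)\subset X$, and on $Y$ through the separable set $(f\circ g)(\Omega)$; so by Lemma~\ref{T5} (Kuratowski--Fr\'echet) I may isometrically embed the relevant part of $Y$ into $\ell^\infty$ and assume $Y=\ell^\infty$, writing $f\circ g=(h_1,h_2,\ldots)$ with each $h_i=f_i\circ g$ real-valued and Lipschitz on $\Omega$. By Lemma~\ref{T6} applied to $f\circ g\colon\Omega\to\ell^\infty$, for a.e.\ $x\in\Omega$ the map $f\circ g$ is metrically differentiable with $\md(f\circ g,x)(v)=\sup_i|\nabla h_i(x)\cdot v|$, and by Lemma~\ref{T7} the rank of this seminorm equals the row rank of the componentwise derivative matrix $D(f\circ g)(x)$ with rows $\nabla h_i(x)$.

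Next I would bring in $g$. Since the conclusion is an a.e.\ statement, I can discard a null set and work at points $x$ where: $f\circ g$ is metrically differentiable (Lemma~\ref{T6}), $g$ is metrically differentiable (Lemma~\ref{T4}), and $\md(g,x)$ has its nominal rank, say $\rank\md(g,x)=r$. Let $N=\{v\in\bbbr^n:\md(g,x)(v)=0\}$, a subspace of dimension $n-r$. The key point is that for $v\in N$ we have $\md(f\circ g,x)(v)=0$ as well: indeed, since $f$ is $L$-Lipschitz,
\[
\frac{d_Y(f(g(x+tv)),f(g(x)))}{|t|}\le L\,\frac{d_X(g(x+tv),g(x))}{|t|}\xrightarrow[t\to0]{}L\,\md(g,x)(v)=0,
\]
using the directional formula \eqref{eq10} for both metric derivatives. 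Hence $N\subseteq\{v:\md(f\circ g,x)(v)=0\}$, and taking orthogonal complements gives $\rank\md(f\circ g,x)\le\rank\md(g,x)$, which is exactly the claim. In fact this last computation already proves the lemma directly, without invoking $\ell^\infty$ or Lemma~\ref{T29} at all; the embedding step is only needed if one prefers to phrase the rank as a matrix rank via Lemma~\ref{T7}.

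There is essentially no hard step here: the only thing to be careful about is that the directional-speed identity \eqref{eq10} is available at $x$ for $g$ and for $f\circ g$, which holds a.e.\ by Lemma~\ref{T4} (or by the stronger Lemma~\ref{elm7}), and that one is allowed to intersect finitely many full-measure sets. The inclusion of kernels $N_{\md(g,x)}\subseteq N_{\md(f\circ g,x)}$ is the analogue, for seminorms, of the kernel inclusion $\ker Dg(x)\subseteq\ker D(f\circ g)(x)$ noted right after Lemma~\ref{T29}, so the ``easy generalization'' remark in the text is justified. I would therefore present the short direct argument as the proof, perhaps with a one-line remark that it also follows by combining Lemma~\ref{T29} with Lemmata~\ref{T6} and~\ref{T7} after embedding into $\ell^\infty$.
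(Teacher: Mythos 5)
Your final argument---using the directional formula \eqref{eq10} together with the Lipschitz constant of $f$ to get $\md(f\circ g,x)\le L\,\md(g,x)$, hence the kernel inclusion and the rank inequality---is exactly the paper's one-line proof. The preliminary embedding into $\ell^\infty$ is, as you yourself note, superfluous and is not used by the paper either.
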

Indeed, it follows from \eqref{eq10} that $\md(f\circ g,x)\leq L\md(g,x)$, whenever $g$ and $f\circ g$ are metrically differentiable at $x$.

The next result is well known and it follows easily from the Brouwer fixed point theorem, see \cite[Lemma~7.23]{rudin}.
\begin{lemma}
\label{T30}
If $h:\bar{B}^n(0,\eps)\to\bbbr^n$ is continuous and $|h(x)-x|<\eps/2$ for all $|x|=\eps$, then $\bar{B}^{n}(0,\eps/2)\subset h(\bar{B}^n(0,\eps))$.
\end{lemma}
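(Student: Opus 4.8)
The plan is to reduce the statement to the Brouwer fixed point theorem by the standard ``retract to the ball'' argument. First I would fix an arbitrary $y\in\bbbr^n$ with $|y|\leq\eps/2$ and recast the desired conclusion $y\in h(\bar{B}^n(0,\eps))$ as a fixed point problem: setting $g(x)=x-h(x)+y$ for $x\in\bar{B}^n(0,\eps)$, we have $g(x)=x$ if and only if $h(x)=y$, and $g$ is continuous since $h$ is. The obstruction to applying Brouwer directly to $g$ is that nothing in the hypotheses forces $g$ to map $\bar{B}^n(0,\eps)$ into itself at interior points; we only control $h$, hence $g$, on the bounding sphere $|x|=\eps$.

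To circumvent this I would compose $g$ with the nearest-point retraction $r:\bbbr^n\to\bar{B}^n(0,\eps)$, which is the identity on $\bar{B}^n(0,\eps)$ and sends $z$ to $\eps z/|z|$ for $|z|>\eps$; this $r$ is continuous. Then $r\circ g$ is a continuous self-map of the closed ball $\bar{B}^n(0,\eps)$, so the Brouwer fixed point theorem produces $x_0$ with $r(g(x_0))=x_0$. The final step is to upgrade this to an honest fixed point of $g$: if $g(x_0)\neq x_0$, then necessarily $|g(x_0)|>\eps$ (otherwise $r$ would act as the identity at $g(x_0)$, forcing $g(x_0)=x_0$), hence $|x_0|=|r(g(x_0))|=\eps$; but then the boundary hypothesis applies at $x_0$ and yields $|g(x_0)|=|x_0-h(x_0)+y|\leq|x_0-h(x_0)|+|y|<\eps/2+\eps/2=\eps$, contradicting $|g(x_0)|>\eps$. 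Therefore $g(x_0)=x_0$, i.e.\ $h(x_0)=y$ with $x_0\in\bar{B}^n(0,\eps)$, and since $y$ was an arbitrary point of $\bar{B}^n(0,\eps/2)$, the inclusion $\bar{B}^n(0,\eps/2)\subset h(\bar{B}^n(0,\eps))$ follows.

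I do not anticipate a genuine obstacle: the entire content is the retraction trick together with the observation that the boundary estimate $|h(x)-x|<\eps/2$ is exactly strong enough to exclude the case $|g(x_0)|>\eps$. The only points deserving a line of care are the continuity of $r$ (it is in fact $1$-Lipschitz) and the fact that Brouwer is being invoked for a self-map of a set homeomorphic to the standard closed $n$-ball, both entirely routine.
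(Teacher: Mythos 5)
Your proof is correct. The paper does not give an argument of its own; it simply cites Rudin's \emph{Real and Complex Analysis}, Lemma~7.23, remarking that the statement follows easily from the Brouwer fixed-point theorem. Your argument is the standard radial-retraction reduction to Brouwer and is entirely sound: the auxiliary map $g(x)=x-h(x)+y$ turns the surjectivity question into a fixed-point problem, composing with the $1$-Lipschitz nearest-point retraction $r$ makes Brouwer applicable, and the boundary bound $|h(x)-x|<\eps/2$ together with $|y|\le\eps/2$ rules out the spurious alternative $|g(x_0)|>\eps$. Rudin's own deduction also rests on Brouwer but by a slightly different device---assuming $y$ is omitted from the image, sending $x$ to the point $\eps(y-h(x))/|y-h(x)|$ of the sphere, invoking Brouwer to produce a fixed point $x_0$ necessarily lying on the sphere, and then reaching a sign contradiction from the inner product $\langle x_0,\, y-h(x_0)\rangle$ using the same boundary estimate. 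The two arguments are equally elementary and equivalent in strength; yours has the minor advantage of producing the preimage directly as a fixed point of $r\circ g$ rather than arguing by contradiction.
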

\begin{remark}
We will use Lemma~\ref{T30} in the proofs of Proposition~\ref{T28} and Theorem~\ref{T16}. The reader should compare the two proofs---finding similarities will help with a better understanding of the underlying ideas.
\end{remark}

The next result is of independent interest and it will be used in the proof of Theorem~\ref{TM7}.
\begin{proposition}
\label{T28}
Let $f:\Omega\to X$ be a Lipschitz mapping from an open set $\Omega\subset\bbbr^n$ to a metric space $X$, such that $\rank\md(f,y)\leq k$ for almost all $y\in\Omega$. If $g:U\to\Omega$ is a Lipschitz map from an open set $U\subset\bbbr^m$, then $\rank\md(f\circ g,x)\leq k$ for almost all $x\in U$.
\end{proposition}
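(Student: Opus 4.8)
The plan is to pass to $\ell^\infty$, reduce the statement to a finite‑dimensional assertion about vanishing Jacobians, and then combine the Brouwer‑type Lemma~\ref{T30} with the covering estimate of Proposition~\ref{Tl14}.

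\emph{First reductions.} If $n\le k$ or $m\le k$, then by Lemma~\ref{T32} we have $\rank\md(f\circ g,x)\le\rank\md(g,x)\le\min\{m,n\}\le k$ a.e., so we may assume $n\ge k+1$ and $m\ge k+1$. Since $f(\Omega)$ is separable we may assume $X=\ell^\infty$ and write $f=(f_1,f_2,\dots)$; by hypothesis and Lemma~\ref{T7}, the componentwise derivative $Df(y)$ has rank $\le k$ for a.e.\ $y\in\Omega$. Applying Lemma~\ref{T7} to the Lipschitz map $f\circ g\colon U\to\ell^\infty$, it suffices to show that every $(k+1)\times(k+1)$ minor of $D(f\circ g)$ vanishes a.e.; equivalently, fixing indices $i_1<\dots<i_{k+1}$ and putting $h:=(f_{i_1},\dots,f_{i_{k+1}})\colon\Omega\to\bbbr^{k+1}$, it suffices to prove $\rank D(h\circ g)(x)\le k$ for a.e.\ $x\in U$. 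Since $Dh(y)$ is a submatrix of $Df(y)$, we have $\rank Dh(y)\le k$ for a.e.\ $y\in\Omega$.

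\emph{Reduction to a measure statement via Lemma~\ref{T30}.} Suppose $\rank D(h\circ g)\ge k+1$ on a set of positive measure in $U$. Pick a point of differentiability $x_0$ of $h\circ g$ lying in that set. Restricting $h\circ g$ to a $(k+1)$-plane through $x_0$ transversal to $\ker D(h\circ g)(x_0)$ and normalizing the derivative to the identity, Lemma~\ref{T30} shows that $h\circ g$ maps every small ball around $x_0$ onto a set containing a $(k+1)$-dimensional ball, hence a set of positive Lebesgue measure in $\bbbr^{k+1}$. Therefore $\H^{k+1}\big((h\circ g)(U)\big)>0$. Contrapositively, since $(h\circ g)(U)=h(g(U))\subseteq h(\Omega)$, it is enough to prove the following:
$$\text{if } h\colon\bbbr^n\supset\Omega\to\bbbr^{k+1} \text{ is Lipschitz with } \rank Dh\le k \text{ a.e., then } \H^{k+1}(h(\Omega))=0 .$$
(One may first peel off, via Lemma~\ref{T32}, the part of $U$ where $\rank Dg\le k$, but this alone does not close the argument, since $g$ may carry a positive‑measure set into the null set where $f$ is degenerate.)

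\emph{The density estimate and the main obstacle.} Apply Proposition~\ref{Tl14} to $h$, whose metric derivative satisfies $\md(h,y)(v)=|Dh(y)v|$, so $\rank\md(h,y)=\rank Dh(y)$. With $E_j=\{x:\rank Dh(x)=j\}$, the hypothesis says $\bigcup_{j>k}E_j$ is $\H^n$-null. For a.e.\ $x\in E_j$ with $j\le k$ and every $m\ge1$ there is $r_{x,m}>0$ with $h(B(x,r))$ covered by $m^j$ balls of radius $\le 3\sqrt{j}Lr/m$ (or one ball of radius $r/m$ if $j=0$), whence $\H^{k+1}_\infty(h(B(x,r)))\lesssim_{n,L} r^{k+1}m^{\,j-k-1}\le r^{k+1}/m$ for $0<r<r_{x,m}$; letting $m\to\infty$ gives $\Theta^{*(k+1)}(h,x)=0$. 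Thus $\Theta^{*(k+1)}(h,x)=0$ for a.e.\ $x\in\Omega$. The remaining — and genuinely delicate — step is to upgrade this to $\H^{k+1}(h(\Omega))=0$ when $k+1<n$: a naive Vitali covering of $\Omega$ produces balls $B(x_i,r_i)$ with $\sum\H^{k+1}_\infty(h(B_i))\le\varepsilon\sum r_i^{k+1}$, and $\sum r_i^{k+1}$ is \emph{not} controlled by $\H^n(\Omega)$ once $k+1<n$; moreover the exceptional $\H^n$-null set $\bigcup_{j>k}E_j$ together with the metric non‑differentiability set is $\H^n$-negligible but need not be $\H^{k+1}$-negligible, and its Lipschitz image under $h$ is not a priori $\H^{k+1}$-null. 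I expect this to be the heart of the proof: it should be handled by exploiting the full covering conclusion of Proposition~\ref{Tl14} (not merely the vanishing density) through a covering scheme adapted to the dyadic/Egorov stratification $\Omega=\bigcup_\ell\Omega_\ell$ on which the radii $r_{x,m}$ are bounded below, and — for the high‑rank exceptional pieces — by a Sard‑type argument showing that on $E_j$ with $j>k$ Proposition~\ref{Tl14} still forces $h(E_j)$ to be $\H^{k+1}$-negligible. One should also reduce at the outset to $\Omega$ bounded (harmless, by writing $\Omega$ as a countable union) and use $\H^{k+1}=\H^{k+1}_\infty$ on subsets of $\bbbr^{k+1}$ (Lemma~\ref{T36}) so that all contents in sight are finite.
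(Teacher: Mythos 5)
Your opening reductions (embedding into $\ell^\infty$, singling out a $(k+1)\times(k+1)$ minor $h=(f_{i_1},\ldots,f_{i_{k+1}})$, and the Brouwer step via Lemma~\ref{T30} showing that $\rank D(h\circ g)\ge k+1$ on a positive-measure set would force $\H^{k+1}\big((h\circ g)(U)\big)>0$) are correct and run parallel to the paper. The problem is the reduction you make at the end of your second paragraph: you pass to the global claim that $\rank Dh\le k$ a.e.\ in $\Omega\subset\bbbr^n$ implies $\H^{k+1}(h(\Omega))=0$, and you do not prove it. When $k+1<n$ this does not follow from the pointwise density $\Theta^{*(k+1)}(h,\cdot)=0$ (you correctly observe that $\sum_i r_i^{k+1}$ is uncontrolled in a Vitali cover), nor from the coarea formula (which only gives $\H^{n-k-1}(h^{-1}(y)\cap\Omega)=0$ for a.e.\ $y$, a condition perfectly compatible with $h^{-1}(y)\ne\varnothing$), nor from Fubini on $(k+1)$-dimensional slices (which produces an uncountable union of $\H^{k+1}$-null sets). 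Worse, the $\H^n$-null set $N$ where $h$ fails to be metrically differentiable with rank $\le k$ may have positive $\H^{k+1}$-measure, and your hypothesis gives no control on $h|_N$ or on $\H^{k+1}(h(N))\subset\bbbr^{k+1}$. The reduced statement is thus at least as delicate as Proposition~\ref{T28} itself, and the ``covering scheme'' and ``Sard-type argument'' you gesture at are not carried out; this is a genuine gap, and you correctly identify it as the heart of the matter.

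The paper's proof avoids the global measure estimate entirely and is essentially local. After choosing a differentiability point $x_0$ of $g$ in the bad set and normalizing, it \emph{restricts $g$ to a $(k+1)$-plane} and arranges $Dg(0)$ to be the identity embedding $\bbbr^{k+1}\hookrightarrow\bbbr^n$. Then $g(\bar B^{k+1}(0,\eps))$ lies within $\eps/(6L)$ of the affine disc $\bar B^{k+1}(0,\eps)\times\{0\}\subset\bbbr^n$. By the triangle inequality, for every $y\in\bbbr^{n-k-1}$ with $|y|<\eps/(6L)$, the slice map $h(\cdot,y)\colon\bar B^{k+1}(0,\eps)\to\bbbr^{k+1}$ still satisfies $|h(x,y)-x|<\eps/2$ on $|x|=\eps$, so by Lemma~\ref{T30} its image contains $\bar B^{k+1}(0,\eps/2)$. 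Now the classical area formula is applied to a Lipschitz map between \emph{equal-dimensional} Euclidean spaces, where ``positive $\H^{k+1}$-image $\Rightarrow\rank=k+1$ on positive measure'' is unconditional; this gives $\rank Dh(\cdot,y)=k+1$ on a positive-measure subset of each such slice, and Fubini then produces a positive-measure subset of $\Omega$ where $\rank Dh\ge k+1$ — the desired contradiction. The transfer from the curved image $g(\bar B^{k+1})$ to nearby \emph{affine} $(k+1)$-slices of $\Omega$, made possible by the differentiability of $g$ at $x_0$, is exactly the idea your proposal is missing.
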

\begin{remark}
This result is not obvious, because it may happen that the image of $g$ is contained in the set where $f$ is not metrically differentiable and therefore, we cannot even try to estimate $\rank\md(f\circ g,x)$ by $\rank \md(f,g(x))$, because $\md(f,g(x))$ might not exist. 
\end{remark}
\begin{proof}
For simplicity assume that $U=\bbbr^m$ and $\Omega=\bbbr^n$.
Suppose to the contrary that $\rank\md(f\circ g,\cdot)\geq k+1$ on a set of positive measure. We may assume that $X=\ell^\infty$, $f=(f_1,f_2,\ldots):\bbbr^n\to\ell^\infty$, so
the rank of the componentwise derivative satisfies 
$\rank D(f\circ g)\geq k+1$ on a set of positive measure. Therefore, we may find a set $E\subset\bbbr^m$ of positive measure, such that a $(k+1)\times (k+1)$ minor of $D(f\circ g)$ is non-zero in $E$, see Lemma~\ref{T7}. Without loss of generality we may assume that
\begin{equation}
\label{eq16}
\det\left[\frac{\partial (f_i\circ g)(x)}{\partial x_j}\right]_{1\leq i,j\leq k+1}\neq 0
\quad
\text{for all $x\in E$.}
\end{equation}
Fix $x_o\in E$ such that $g$ is differentiable at $x_o$. 
To complete the proof, it suffices to show that there is a neighborhood $G\subset\bbbr^n$ of $g(x_o)\in\bbbr^n$ such that the derivative of the mapping
$F=(f_1,\ldots,f_{k+1}):\bbbr^n\to\bbbr^{k+1}$ has rank $k+1$ on a set of positive measure in $G$, because this will imply that $\rank\md(f,\cdot)\geq k+1$, on a set of positive measure, see Lemma~\ref{T7}.

Without loss of generality we may assume that $x_o=0$ and $g(x_o)=0$.
From now on we restrict $g$ to the $(k+1)$-dimensional subspace generated by the first $(k+1)$-coordinates so we identify $g$ with  $g:=g|_{\bbbr^{k+1}}:\bbbr^{k+1}\to\bbbr^n$.
Since by \eqref{eq16}, $\rank D(F\circ g)(0)=k+1$ (because $0=x_o\in E$), it follows from 
Lemma~\ref{T29}, that $\rank Dg(0)\geq k+1$, and hence $\rank Dg(0)=k+1$, because $g$ is defined on $\bbbr^{k+1}$. 
By pre-composing $g$ with a suitable linear map and by choosing a coordinate system in $\bbbr^n$ so that $Dg(0)(\bbbr^{k+1})$ is the subspace of $\bbbr^n$ generated by the first $(k+1)$-coordinates, we may assume that $Dg(0)$ is the identity embedding of $\bbbr^{k+1}$ into $\bbbr^n$. All these assumptions are made to simplify notation. 

Thus $g:\bbbr^{k+1}\to\bbbr^n=\bbbr^{k+1}\times\bbbr^{n-k-1}$, and $g(x)=(x,0)+o(|x|)\in\bbbr^{k+1}\times\bbbr^{n-k-1}$. Since by \eqref{eq16}, $\det D(F\circ g)(0)\neq 0$, we may assume (after post-composing with an affine isomorphism) that $F(0)=0$ and $D(F\circ g)(0)=\id$ i.e., $(F\circ g)(x)=x+o(|x|)$.

Therefore, there is $\eps>0$, such that if $|x|=\eps$, then 
$$
|(F\circ g)(x)-x|<\frac{\eps}{6} 
\quad
\text{and}
\quad
|g(x)-(x,0)|<\frac{\eps}{6L},
$$
where $L$ is a Lipschitz constant of $F$.

Fix $y\in \bbbr^{n-k-1}$, $|y|<\frac{\eps}{6L}$, such that
$F$ is differentiable at almost all points of the hyperplane $\bbbr^{k+1}\times\{ y\}\subset\bbbr^n$ (by Fubini's theorem almost all $y\in\bbbr^{n-k-1}$ have this property).
For $|x|=\eps$ we have
\begin{equation*}
\begin{split}
|F(x,y)-x|
&\leq 
|F(x,y)-F(x,0)|+|F(x,0)-F(g(x))|+|(F\circ g)(x)-x|\\
&<
L|y| + L|(x,0)-g(x)|+\frac{\eps}{6}<\frac{\eps}{2}.
\end{split}
\end{equation*}
It follows from Lemma~\ref{T30} that
$$
\bar{B}^{k+1}(0,\eps/2)\subset F(\bar{B}^{k+1}(0,\eps)\times \{ y\}).
$$
In particular, the $(k+1)$-dimensional measure of $F(\bar{B}^{k+1}(0,\eps)\times \{ y\})$ is positive and it follows from the classical area formula that 
$$
\rank D\Big(F\big|_{\bar{B}^{k+1}(0,\eps)\times\{ y\}}\Big)= k+1
$$
on a set of positive measure. Since it is true for almost all $y\in\bbbr^{n-k-1}$ such that $|y|<\frac{\eps}{6L}$, it follows that $\rank DF\geq k+1$ on a subset of
$B^{k+1}(0,\eps)\times B^{n-k-1}(0,\eps/6L)$ of
positive measure and we arrive at a contradiction.
\end{proof}

\section{Area formula for length preserving mappings}
\label{AFL}

Let us start with a simplified and a more transparent version of the main result of this section which is Theorem~\ref{T18}.
\begin{theorem}
\label{T23}
Let $\Phi:X\to Y$ be a map between metric spaces that preserves the length of rectifiable curves i.e.,
$\ell_Y(\Phi\circ\gamma)=\ell_X(\gamma)$
for all rectifiable curves $\gamma:[a,b]\to X$.
Let $f:\Omega\to X$ be a locally Lipschitz map defined on an open set $\Omega\subset\bbbr^n$ for some $n$, and let $\tilde{X}=f(\Omega)$. Then
for any Borel function $g:\tilde{X}\to [0,\infty]$ we have
$$
\int_{\tilde{X}} g(x)\, d\H^n(x)=
\int_{\Phi(\tilde{X})} \Big(\sum_{x\in\Phi^{-1}(y)\cap\tilde{X}}g(x)\Big)\, d\H^n(y).
$$
\end{theorem}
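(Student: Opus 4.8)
The plan is to deduce this from Kirchheim's area formula (Lemma~\ref{T8}) applied twice—once to $f$ and once to $\Phi\circ f$—after establishing the crucial pointwise identity $\md(\Phi\circ f, x) = \md(f,x)$ a.e. Since $f(\Omega)$ is separable, we may assume $X,Y$ are separable and hence (Lemma~\ref{T5}) isometrically embedded in $\ell^\infty$; this is only a notational convenience. Note first that $\Phi$ restricted to $\tilde X = f(\Omega)$ need not be Lipschitz on all of $X$, but $\Phi\circ f$ is locally Lipschitz on $\Omega$: indeed, for $x,y\in\Omega$ close, joining $f(x)$ to $f(y)$ by the $f$-image of the segment $[x,y]$ (which lies in $\Omega$ for $y$ near $x$) gives a rectifiable curve in $X$ whose length is at most $L|x-y|$, and its $\Phi$-image has the same length, so $d_Y(\Phi(f(x)),\Phi(f(y)))\le L|x-y|$. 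Thus Lemma~\ref{T4} applies to $\Phi\circ f$.

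The key step is the identity $\md(\Phi\circ f,x)=\md(f,x)$ for a.e.\ $x\in\Omega$. Fix $x$ where both $f$ and $\Phi\circ f$ are metrically differentiable and $f$ additionally satisfies the stronger Kirchheim estimate \eqref{Elq15}. For a fixed direction $v$, consider the curve $\gamma_t: s\mapsto f(x+sv)$, $s\in[0,t]$; it is Lipschitz, hence rectifiable, and $\ell_X(\gamma_t)=\int_0^t |\dot\gamma_t|(s)\,ds$ by Lemma~\ref{T3}, where $|\dot\gamma_t|(s)$ exists a.e. Since $\Phi$ preserves length, $\ell_Y(\Phi\circ\gamma_t)=\ell_X(\gamma_t)$. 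Now $d_Y(\Phi(f(x+tv)),\Phi(f(x)))\le \ell_Y(\Phi\circ\gamma_t)=\ell_X(\gamma_t)$, so dividing by $|t|$ and letting $t\to 0$ gives $\md(\Phi\circ f,x)(v)\le \md(f,x)(v)$, because $t^{-1}\ell_X(\gamma_t)\to\md(f,x)(v)$—this last limit follows from \eqref{Elq15}, which forces the speed of $s\mapsto f(x+sv)$ to converge to $\md(f,x)(v)$ uniformly as $s\to 0$, hence the average of the speed over $[0,t]$ converges to $\md(f,x)(v)$. For the reverse inequality, $\Phi$ is $1$-Lipschitz on the rectifiable-curve level but we need it pointwise: since $\Phi$ preserves lengths, it is in particular $1$-Lipschitz along any geodesic, but $X$ need not be geodesic. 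Instead, run the same argument in reverse using that $\Phi\circ f$ is locally Lipschitz and metrically differentiable: the curve $s\mapsto \Phi(f(x+sv))$ has speed converging to $\md(\Phi\circ f,x)(v)$, its length is $\ell_X(\gamma_t)$ (length preservation again, now reading it the other way), and $d_X(f(x+tv),f(x))\le \ell_X(\gamma_t)=\ell_Y(\Phi\circ\gamma_t)$; dividing by $|t|$ and taking $t\to 0$ yields $\md(f,x)(v)\le\md(\Phi\circ f,x)(v)$. Combining, $\md(\Phi\circ f,x)=\md(f,x)$ a.e., and consequently $J_n(\md(\Phi\circ f,x))=J_n(\md(f,x))$ a.e.

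With this identity in hand, apply Lemma~\ref{T8} to $f:\Omega\to X$ and to $\Phi\circ f:\Omega\to Y$. For a Borel $g:\tilde X\to[0,\infty]$, the function $g\circ f$ is Borel on $\Omega$ (composition of Borel with continuous), and \eqref{eq14} for $f$ with integrand $g\circ f$ gives
\[
\int_\Omega (g\circ f)(x)\,J_n(\md(f,x))\,d\H^n(x)=\int_{\tilde X}\Big(\sum_{x\in f^{-1}(y)}(g\circ f)(x)\Big)d\H^n(y)=\int_{\tilde X} g(y)\,\H^0(f^{-1}(y))\,d\H^n(y),
\]
and similarly, since $(\Phi\circ f)^{-1}(z)\cap\Omega=f^{-1}(\Phi^{-1}(z)\cap\tilde X)$ and $g$ is constant on fibers of $f$ over each point of $\tilde X$,
\[
\int_\Omega (g\circ f)(x)\,J_n(\md(\Phi\circ f,x))\,d\H^n(x)=\int_{\Phi(\tilde X)}\Big(\sum_{z\in (\Phi\circ f)^{-1}(y)}(g\circ f)(z)\Big)d\H^n(y)=\int_{\Phi(\tilde X)}\Big(\sum_{x\in\Phi^{-1}(y)\cap\tilde X} g(x)\,\H^0(f^{-1}(x))\Big)d\H^n(y).
\]
The left-hand sides coincide by the pointwise identity, so the two right-hand sides are equal. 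To finish, I apply exactly the same pair of computations with $g$ replaced by $g/\H^0(f^{-1}(\cdot))$ on the set where the fiber is nonempty and finite—or, more cleanly, note that the identity just proved holds for every nonnegative Borel $g$ on $\tilde X$, so substituting $\tilde g(x)=g(x)/\H^0(f^{-1}(x))$ (a Borel function, using that $y\mapsto \H^0(f^{-1}(y))$ is Borel-measurable, which follows from the area formula \eqref{eq15} with $E=\Omega$) cancels the multiplicity factor and yields precisely
\[
\int_{\tilde X} g(x)\,d\H^n(x)=\int_{\Phi(\tilde X)}\Big(\sum_{x\in\Phi^{-1}(y)\cap\tilde X} g(x)\Big)d\H^n(y).
\]
The main obstacle I anticipate is the careful justification of $\md(\Phi\circ f,x)\ge\md(f,x)$, since $\Phi$ is only assumed length-preserving (not Lipschitz) and $X$ is not assumed geodesic; the resolution is to always work through lengths of the auxiliary Lipschitz curves $s\mapsto f(x+sv)$ and to use Kirchheim's refined estimate \eqref{Elq15} to identify $\lim_{t\to0} t^{-1}\ell_X(\gamma_t)$ with $\md(f,x)(v)$, rather than trying to compare $\Phi$-images of arbitrary pairs of points directly. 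A secondary technical point is the measurability of the multiplicity function $y\mapsto\H^0(f^{-1}(y))$, which is handled by Lemma~\ref{T8}.
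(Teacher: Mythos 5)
Your overall strategy---establish $\md(\Phi\circ f,x)=\md(f,x)$ a.e.\ and then apply Kirchheim's area formula to both $f$ and $\Phi\circ f$ with a suitably normalized test function---is exactly the paper's strategy (the paper routes it through the more general Theorem~\ref{T18}). The second half of your argument, dividing $g$ by the multiplicity $\H^0(f^{-1}(\cdot))$ to cancel it, matches the paper's computation. The problem is in the key pointwise identity.

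Your claim that the refined Kirchheim estimate \eqref{Elq15} at $x$ ``forces the speed of $s\mapsto f(x+sv)$ to converge to $\md(f,x)(v)$ uniformly as $s\to 0$'' is false. The speed $|\dot\gamma_{x,v}|(s)$ for $s>0$ is governed by the infinitesimal behavior of $f$ at the point $x+sv$, not at $x$; \eqref{Elq15} controls $d(f(y),f(z))$ up to an error of order $\eps(|x-y|+|x-z|)$, which does \emph{not} become small after dividing by $|y-z|$ when $y,z$ are close to each other but bounded away from $x$. A concrete counterexample: take $n=1$, $v=1$, $X=\bbbr^2$, and $f(s)=(s,h(s))$, where $h$ is Lipschitz with $h(0)=0$ and on each dyadic block $[2^{-k},2^{-k+1}]$ the function $h$ is a sawtooth of slope $\pm 1$ everywhere, with $m_k$ teeth each of height $2^{-k}/(4m_k)$, where $m_k\to\infty$. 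Then $|h(s)|\le s/(4m_k)=o(s)$, which gives $\bigl||f(y)-f(z)|-|y-z|\bigr|\le|h(y)|+|h(z)|=o(|y|+|z|)$, so \eqref{Elq15} holds at $x=0$ with $\md(f,0)(1)=1$. Yet $|\dot\gamma|(s)=\sqrt{1+h'(s)^2}=\sqrt2$ for a.e.\ $s>0$, hence $\ell(\gamma|_{[0,t]})/t=\sqrt2$ for all $t$, which does not converge to $\md(f,0)(1)=1$. So the identity $t^{-1}\ell_X(\gamma_{x,v}|_{[0,t]})\to\md(f,x)(v)$ simply does not hold at every point where \eqref{Elq15} holds; it can fail at a point while holding almost everywhere.

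The correct repair is the paper's Lemma~\ref{T20}: fix a direction $v$, apply Lemma~\ref{T19} (the arclength function $s_\gamma$ is Lipschitz with $\dot s_\gamma=|\dot\gamma|$ a.e.)\ to the curves $t\mapsto f(w+tv)$ for $w\in v^\perp$, and then use Fubini in the coordinates $(w,t)\in v^\perp\times\bbbr$ to conclude that for a.e.\ $x=w+tv$ the point $t$ is a differentiability point of $s_\gamma$ with $\dot s_\gamma(t)=|\dot\gamma|(t)=\md(f,x)(v)$. This gives the desired limit for a.e.\ $x$ and a single $v$, hence (by countability) for a.e.\ $x$ and all $v$ in a fixed countable dense subset of $\bbbs^{n-1}$ simultaneously; Lemma~\ref{T21} then upgrades this to all $v$. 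Your write-up tries to replace this measure-theoretic Fubini step with a pointwise consequence of \eqref{Elq15}, and that step is the genuine gap.
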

\begin{remark}
We do not assume that $\Phi$ is continuous (however, $\Phi\circ f$ is locally Lipschitz continuous). For example, if the only rectifiable curves in $X$ are constant ones (e.g., if $X$ is the von Koch snowflake or a Cantor set), then {\em any} map $\Phi:X\to Y$ satisfies the assumptions of the theorem. However, the result is trivial in that case since $\tilde{X}=f(\Omega)$ consists of a single point (if $\Omega$ is connected). Thus the result is interesting only if there are many rectifiable curves in $X$.
\end{remark}
\begin{remark}
Note that the theorem holds for any $n$. The result looks like the area formula under the assumption that the derivative of $\Phi$ is an {\em isometry}. The only problem is that under the assumptions of the theorem the derivative of $\Phi$ is not and cannot be defined. 
\end{remark}

Since according to Lemma~\ref{T24}, the projection $\pi:\bbbh^n\to\bbbr^{2n}$ preserves length of rectifiable curves, we obtain \begin{theorem}
\label{T25}
Let $f:K\to\bbbh^n$ be a Lipschitz map defined on a Borel set $K\subset\bbbr^k$ for some $k\leq n$. Then for any Borel function 
$g:f(K)\to [0,\infty]$ we have
$$
\int_{f(K)} g(x)\, d\H^k_{cc}(x)=
\int_{\pi(f(K))}\Big(\sum_{x\in \pi^{-1}(y)\cap f(K)}g(x)\Big)\, d\H^k(y).
$$
\end{theorem}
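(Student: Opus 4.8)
Proof proposal for Theorem~\ref{T25}.

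The plan is to realize Theorem~\ref{T25} as a special case of Theorem~\ref{T23}, taking $X=\bbbh^n$ with the Carnot--Carath\'eodory metric, $Y=\bbbr^{2n}$, and $\Phi=\pi\colon\bbbh^n\to\bbbr^{2n}$ the projection onto the first $2n$ coordinates. By Lemma~\ref{T24} the map $\pi$ preserves the length of every Lipschitz curve in $\bbbh^n$; since every rectifiable curve admits a Lipschitz reparametrization and length is reparametrization invariant, $\pi$ preserves the length of all rectifiable curves, so the hypothesis of Theorem~\ref{T23} is met. Note also that $\pi$ is $1$-Lipschitz: $d_{cc}$ is a length metric and $|\pi(p)-\pi(q)|$ is bounded by the length of $\pi\circ\gamma$ for any horizontal curve $\gamma$ joining $p$ to $q$.

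The only discrepancy with Theorem~\ref{T23} is that there the source map is (locally) Lipschitz on an \emph{open} subset of Euclidean space, whereas here $f$ is only Lipschitz on the Borel set $K\subset\bbbr^k$. To bridge this I would first invoke the Wenger--Young extension theorem, Lemma~\ref{elm5} --- this is exactly where the hypothesis $k\le n$ enters: there is a Lipschitz map $F\colon\bbbr^k\to\bbbh^n$ with $F|_K=f$, and in particular $f(K)=F(K)\subset\tilde X:=F(\bbbr^k)$. Since $\pi\circ F$ is then Lipschitz, Theorem~\ref{T23} applied to $F$ yields, for every Borel $h\colon\tilde X\to[0,\infty]$,
\begin{equation}\label{eqT25aux}
\int_{\tilde X}h\,d\H^k_{cc}=\int_{\bbbr^{2n}}\Big(\sum_{x\in\pi^{-1}(y)\cap\tilde X}h(x)\Big)\,d\H^k(y).
\end{equation}
It remains to pass from $\tilde X$ to $f(K)$.

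Because $f(K)=F(K)$ is the continuous image of a Borel set, it need not itself be Borel, so one cannot simply substitute $h=g\chi_{f(K)}$ into \eqref{eqT25aux}; I expect this restriction step to be the only genuinely delicate point, and I would handle it by exhausting $K$ from inside. Choose compact sets $C_1\subset C_2\subset\cdots\subset K$ with $\H^k\big(K\setminus\bigcup_jC_j\big)=0$. Then each $F(C_j)$ is compact, hence Borel in $\bbbh^n$, the sets $F(C_j)$ increase, and since $f(K)\setminus\bigcup_jF(C_j)\subset F\big(K\setminus\bigcup_jC_j\big)$ and $F$ is Lipschitz, we get $\H^k_{cc}\big(f(K)\setminus\bigcup_jF(C_j)\big)=0$. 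Given a Borel $g\colon f(K)\to[0,\infty]$, apply \eqref{eqT25aux} for each $j$ with $h$ equal to $g$ on $F(C_j)$ and $0$ on $\tilde X\setminus F(C_j)$ (a Borel function, since $F(C_j)$ is Borel and $g|_{F(C_j)}$ is Borel), and let $j\to\infty$; monotone convergence on both sides gives
$$
\int_{f(K)}g\,d\H^k_{cc}=\int_{\bbbr^{2n}}\Big(\sum_{x\in\pi^{-1}(y)\cap\bigcup_jF(C_j)}g(x)\Big)\,d\H^k(y).
$$

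To finish I would show that the inner sums over $\bigcup_jF(C_j)$ and over $f(K)$ coincide for $\H^k$-a.e.\ $y\in\bbbr^{2n}$. Set $N:=f(K)\setminus\bigcup_jF(C_j)$, so $\H^k_{cc}(N)=0$, and by Borel regularity (Lemma~\ref{T35}) pick a Borel set $\hat N\supset N$ in $\bbbh^n$ with $\H^k_{cc}(\hat N)=0$. Substituting $h=\chi_{\hat N\cap\tilde X}$ into \eqref{eqT25aux} gives $\int_{\bbbr^{2n}}\#\big(\pi^{-1}(y)\cap\hat N\cap\tilde X\big)\,d\H^k(y)=\H^k_{cc}(\hat N\cap\tilde X)=0$, hence $\pi^{-1}(y)\cap N=\emptyset$ for a.e.\ $y$, so that indeed $\sum_{x\in\pi^{-1}(y)\cap f(K)}g(x)=\sum_{x\in\pi^{-1}(y)\cap\bigcup_jF(C_j)}g(x)$ for a.e.\ $y$. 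Since this common integrand vanishes outside $\pi(f(K))$, the domain of integration may be restricted to $\pi(f(K))$, which is precisely the asserted identity. Apart from this last restriction step, the argument is a direct specialization of Theorem~\ref{T23} through Lemmas~\ref{T24} and~\ref{elm5}.
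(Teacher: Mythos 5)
Your proof is correct and follows exactly the paper's route: use the Wenger--Young extension theorem (Lemma~\ref{elm5}) to reduce to a Lipschitz map $F$ on all of $\bbbr^k$, then apply Theorem~\ref{T23} with $\Phi=\pi$, whose length-preserving property is Lemma~\ref{T24}. You go beyond the paper's terse justification (``we may assume $f$ is defined on $\bbbr^k$'') by carefully handling the passage from $\tilde X=F(\bbbr^k)$ down to $f(K)$ --- observing that $f(K)$ need not be Borel and closing the gap via a compact exhaustion of $K$, Borel regularity of $\H^k_{cc}$, and monotone convergence --- a worthwhile precision that the paper leaves implicit.
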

Indeed, this follows immediately from Theorem~\ref{T23} because according to Lemma~\ref{elm5}, we may assume that $f$ is defined on $\bbbr^k$. Now, Theorem~\ref{TM6} follows immediately from Theorem~\ref{T25}, because any countably $k$-rectifiable subset of $\bbbh^n$ is the union of countably many disjoint sets $f(K_i)$ plus a set of $\H^k_{cc}$-measure zero.

Theorem~\ref{T23} is a straightforward consequence of the following more general result. While the statement of Theorem~\ref{T18} is not as appealing as that of Theorem~\ref{T23}, we actually need this more general statement for the applications to results in Section~\ref{FLM}, see Theorem~\ref{T27}. 

\begin{theorem}
\label{T18}
Let $\Phi:X\to Y$ be a map between metric spaces. Assume that $\Omega\subset\bbbr^n$ is open and $f:\Omega\to X$, is Lipschitz. 
For $x\in\Omega$ and $v\in\bbbs^{n-1}$ let
$$
\gamma_{x,v}(t)=f(x+tv):[0,d(x)]\to X,
\quad
\text{where} 
\quad
d(x)=\min\Big\{\frac{1}{2}\dist(x,\partial\Omega),1\Big\}
$$
be a family of Lipschitz curves in $X$.
Assume that
\begin{equation}
\label{eq9}
\ell_Y(\Phi\circ\gamma_{x,v}|_{[0,t]})=\ell_X(\gamma_{x,v}|_{[0,t]})
\quad
\text{for all $x\in\Omega$, $v\in\bbbs^{n-1}$, and all $t\in [0,d(x)]$.}
\end{equation}
Then
\begin{itemize}
\item[(a)]
$\md (\Phi\circ f,x)=\md (f,x)$
for almost all $x\in\Omega$.\\
\item[(b)]
$J_n(\md(\Phi\circ f,x))=J_n(\md (f,x))$
for almost all $x\in\Omega$.\\
\end{itemize}
Moreover, if $\tilde{X}=f(\Omega)$, then
\begin{itemize}
\item[(c)]
For any Borel function $g:\tilde{X}\to [0,\infty]$.
$$
\int_{\tilde{X}} g(x)\, d\H^n(x)=
\int_{\Phi(\tilde{X})} \Big(\sum_{x\in\Phi^{-1}(y)\cap\tilde{X}}g(x)\Big)\, d\H^n(y).
$$
\item[(d)]
For any Borel set $E\subset\tilde{X}$ and any Borel function $g:Y\to [0,\infty]$
$$
\int_E (g\circ\Phi)(x)\, d\H^n(x)
=\int_Y g(y)\H^0(\Phi^{-1}(y)\cap E)\, d\H^n(y).
$$
\end{itemize}
\end{theorem}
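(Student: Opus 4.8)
The plan is to route everything through the auxiliary map $\psi:=\Phi\circ f$. First I would observe that $\psi$ is locally Lipschitz: for points $p,q$ in a common convex set $C\Subset\Omega$ (e.g.\ a ball), joining them by the straight segment in direction $v=(q-p)/|q-p|$, subdividing that segment into finitely many pieces each of length at most $d(\cdot)$ (possible since $d>0$ on $\Omega$), and applying \eqref{eq9} on each piece together with the additivity of length and Lemma~\ref{T3},
$$
d_Y(\psi(p),\psi(q))\le \ell_Y(\Phi\circ\gamma_{p,v}|_{[0,|q-p|]})=\ell_X(\gamma_{p,v}|_{[0,|q-p|]})\le (\lip f)\,|q-p|.
$$
Hence $\psi$ is $(\lip f)$-Lipschitz on every convex set $C\Subset\Omega$, and by Lemma~\ref{T4} both $f$ and $\psi$ are metrically differentiable at a.e.\ point of $\Omega$.

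For part (a), fix a direction $v\in\bbbs^{n-1}$. By \eqref{eq9} and additivity of length, along \emph{every} line $\ell$ in direction $v$ the (locally Lipschitz) curves $f|_\ell$ and $\psi|_\ell$ have the same length on every subsegment, so by Lemma~\ref{T3} their speeds agree at a.e.\ parameter on $\ell$. The set where $f$ or $\psi$ fails to be metrically differentiable is null, hence meets a.e.\ line in direction $v$ in a set of linear measure zero, and at a point $x\in\ell$ of metric differentiability these two speeds are exactly $\md(f,x)(v)$ and $\md(\psi,x)(v)$ by \eqref{eq10}. Fubini then gives $\md(f,x)(v)=\md(\psi,x)(v)$ for a.e.\ $x\in\Omega$. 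Letting $v$ run over a countable dense subset of $\bbbs^{n-1}$ and using that $\md(f,x)$ and $\md(\psi,x)$ are continuous, positively homogeneous seminorms, we obtain $\md(\psi,x)=\md(f,x)$ for a.e.\ $x$, which is (a). Part (b) is then immediate, since $J_n(\sigma)$ depends only on the seminorm $\sigma$.

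For part (c), apply the Kirchheim area formula \eqref{eq14} to $f$ and to $\psi$ on each ball $B\Subset\Omega$ and let $B$ exhaust $\Omega$, passing to the limit by monotone convergence. By (b) the integrands on the $\Omega$-side coincide, so for every Borel $h\colon\Omega\to[0,\infty]$
$$
\int_{\tilde X}\Big(\sum_{x\in f^{-1}(z)}h(x)\Big)\,d\H^n(z)=\int_{\Phi(\tilde X)}\Big(\sum_{x\in\psi^{-1}(y)}h(x)\Big)\,d\H^n(y),
$$
and since $\psi^{-1}(y)=f^{-1}(\Phi^{-1}(y)\cap\tilde X)$ the right-hand inner sum equals $\sum_{z\in\Phi^{-1}(y)\cap\tilde X}\sum_{x\in f^{-1}(z)}h(x)$. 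It then remains to represent an arbitrary Borel $g\colon\tilde X\to[0,\infty]$ in the form $z\mapsto\sum_{x\in f^{-1}(z)}h(x)$ for a suitable Borel $h$: by \eqref{eq15}, $f$ carries $\{J_n(\md f)=0\}$ onto an $\H^n$-null set and is $\H^n$-a.e.\ countable-to-one off that set, so a measurable-selection argument (e.g.\ Lusin--Novikov uniformization) produces a Borel set $A\subseteq\Omega$ on which $f$ is injective with $\H^n(\tilde X\setminus f(A))=0$, and $h=(g\circ f)\chi_A$ works. Substituting this $h$ yields (c).

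Part (d) then follows from (c): taking $g=\chi_N$ for a Borel $\H^n$-null set $N\subseteq\tilde X$ shows $\H^n(\Phi(N))=0$, so $\Phi$ maps $\H^n$-null subsets of $\tilde X$ to $\H^n$-null sets; combined with a Borel section of $f$ defined off a null set, this shows $\Phi|_{\tilde X}$ agrees $\H^n$-a.e.\ with a Borel map, whence $g\circ\Phi$ is $\H^n$-measurable on $\tilde X$ whenever $g\colon Y\to[0,\infty]$ is Borel. Plugging $G=(g\circ\Phi)\chi_E$ into (c) and using that $g\circ\Phi$ is constantly $g(y)$ on $\Phi^{-1}(y)$ (and that the above null-set facts make the fibre sum insensitive to modifying $g\circ\Phi$ on a null set) collapses the inner sum to $g(y)\,\H^0(\Phi^{-1}(y)\cap E)$, which is precisely the identity in (d) (the integral over $Y$ reduces to one over $\Phi(\tilde X)$ since $E\subseteq\tilde X$). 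The one genuinely delicate step is the measurable selection in (c) --- i.e.\ decomposing the image into Borel ``sheets'' on which $f$ is one-to-one, which is already implicit in the proof of the classical area formula on $\bbbr^n$; everything else is bookkeeping with Lemma~\ref{T3}, \eqref{eq10}, and Kirchheim's theorems.
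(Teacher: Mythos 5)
Parts (a) and (b) are correct and follow essentially the same route as the paper: length preservation along lines in a fixed direction plus Fubini (this is what Lemmata~\ref{T19} and \ref{T20} package), then a countable dense set of directions and Lemma~\ref{T21}. Your preliminary observation that $\psi=\Phi\circ f$ is locally Lipschitz is also fine.

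For (c) you depart from the paper, and there is a genuine gap. The paper avoids measurable selection altogether by writing $G(y)=g(y)/\H^0(f^{-1}(y))$ and letting the fiber sums telescope; you instead try to build a Borel right inverse $A$ of $f$ and take $h=(g\circ f)\chi_A$. This is a legitimate alternative strategy, but after substituting $h=(g\circ f)\chi_A$ into
$$
\int_{\tilde X}\Big(\sum_{x\in f^{-1}(z)}h(x)\Big)\,d\H^n(z)
=\int_{\Phi(\tilde X)}\Big(\sum_{z\in\Phi^{-1}(y)\cap\tilde X}\sum_{x\in f^{-1}(z)}h(x)\Big)\,d\H^n(y),
$$
the inner sum on the right equals $\sum_{z\in\Phi^{-1}(y)\cap f(A)}g(z)$, not $\sum_{z\in\Phi^{-1}(y)\cap\tilde X}g(z)$. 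These need not coincide just because $\H^n(\tilde X\setminus f(A))=0$; one must separately show that $\Phi^{-1}(y)\cap(\tilde X\setminus f(A))=\varnothing$ for a.e.\ $y$, i.e.\ that $\Phi$ maps $\H^n$-null subsets of $\tilde X$ to $\H^n$-null sets. You prove exactly that null-preservation property only in part (d), and you derive it \emph{from} (c) --- so as written the argument is circular.

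The gap is fillable without circularity: choose a Borel set $N\supset\tilde X\setminus f(A)$ with $\H^n(N)=0$, apply the displayed identity with $h=\chi_{f^{-1}(N)}$, and use $\H^0(f^{-1}(z))\ge 1$ for $z\in\tilde X$ to deduce
$$
0=\int_{\tilde X\cap N}\H^0(f^{-1}(z))\,d\H^n(z)\ \ge\ \int_{\Phi(\tilde X)}\H^0\big(\Phi^{-1}(y)\cap\tilde X\cap N\big)\,d\H^n(y),
$$
hence $\Phi^{-1}(y)\cap(\tilde X\setminus f(A))=\varnothing$ for a.e.\ $y$. With that patch the selection approach works, though it is considerably heavier than the paper's one-line division trick, which avoids Lusin--Novikov entirely (and, incidentally, avoids having to worry about whether $f(A)$ is Borel rather than merely analytic). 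Part (d) is then fine once (c) is in place.
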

\begin{remark}
Even if $f$ is one-to-one and surjective, $\Phi$ need not be continuous for the claim of the theorem to be true. For example, we can have $f$ defined on $(0,1)$ that bends the interval in a length preserving way, and glues $1$ to $1/2$. Since $1$ is not a point in the domain $(0,1)$ the map is one-to-one and the inverse map $\Phi=f^{-1}$ is discontinuous at $1/2$, but it preserves the length of curves $\gamma_{x,v}$. 
The lack of continuity does not create any problem in the proof, because $\Phi\circ f$ is locally Lipschitz continuous. 
\end{remark}

In the proof we will need three lemmata.

From \eqref{eq10} we know that $\md(f,x)(v)$ equals the speed of the curve $\gamma_{x,v}(t)$ at $t=0$. However, assumptions of the theorem provide information about length of curves so it will be convenient to express $\md(f,x)(v)$ as a derivative of the length of the curve $\gamma_{x,v}(t)$. The lemmata will be focused on that problem.

Given a Lipschitz curve $\gamma:[a,b]\to X$,
let $s_\gamma:[a,b]\to [0,\ell(\gamma)]$, $s_\gamma(t)=\ell(\gamma|_{[a,t]})$, be the so-called {\em arc-length} parameter. 
\begin{lemma}
\label{T19}
If $\gamma:[a,b]\to X$ is Lipschitz, then $s_\gamma$ is Lipschitz and $\dot{s}_\gamma(t)=|\dot{\gamma}|(t)$ for almost all $t\in [a,b)$. That is,
\begin{equation}
\label{eq11}
|\dot{\gamma}|(t)=\lim_{h\to 0^+}\frac{d(\gamma(t+h),\gamma(t))}{h}= \lim_{h\to 0^+}\frac{\ell(\gamma|_{[t,t+h]})}{h}
\quad
\text{for almost all $t\in [a,b)$.}
\end{equation}
\end{lemma}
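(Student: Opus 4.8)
\emph{Proof plan.} The plan is to reduce the statement to the classical one‑variable theory through the length formula of Lemma~\ref{T3}. First I would observe that $s_\gamma$ is $L$‑Lipschitz and nondecreasing, where $L$ is a Lipschitz constant of $\gamma$. Indeed, by additivity of length over concatenated subintervals, $s_\gamma(t_2)-s_\gamma(t_1)=\ell(\gamma|_{[t_1,t_2]})$ for $a\le t_1\le t_2\le b$, and for any partition of $[t_1,t_2]$ one has $\sum_i d(\gamma(t_i),\gamma(t_{i+1}))\le L\sum_i (t_{i+1}-t_i)=L(t_2-t_1)$, so $0\le s_\gamma(t_2)-s_\gamma(t_1)\le L(t_2-t_1)$. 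In particular $s_\gamma$ is absolutely continuous and differentiable at a.e.\ point of $[a,b)$.

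Next I would apply Lemma~\ref{T3} to each restricted curve $\gamma|_{[a,t]}$. Since the speed is defined by a \emph{two-sided} limit it is a local quantity, so the speed of $\gamma|_{[a,t]}$ at an interior point agrees with $|\dot\gamma|$ there; hence Lemma~\ref{T3} yields $s_\gamma(t)=\ell(\gamma|_{[a,t]})=\int_a^t|\dot\gamma|(\tau)\,d\tau$ for every $t\in[a,b]$. Here $|\dot\gamma|$ is Lebesgue measurable, being the a.e.\ limit of the measurable functions $\tau\mapsto d(\gamma(\tau+h_j),\gamma(\tau))/|h_j|$ along any sequence $h_j\to 0$, and it is bounded by $L$ wherever it exists, so $|\dot\gamma|\in L^1[a,b]$.

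Finally, by the Lebesgue differentiation theorem almost every $t\in[a,b)$ is a Lebesgue point of $|\dot\gamma|$, and at such a $t$ we get $\dot s_\gamma(t)=\lim_{h\to 0^+}\frac{1}{h}\int_t^{t+h}|\dot\gamma|(\tau)\,d\tau=|\dot\gamma|(t)$; since $s_\gamma(t+h)-s_\gamma(t)=\ell(\gamma|_{[t,t+h]})$, this is exactly $\lim_{h\to 0^+}\ell(\gamma|_{[t,t+h]})/h=|\dot\gamma|(t)$. The remaining equality in \eqref{eq11}, namely $\lim_{h\to 0^+}d(\gamma(t+h),\gamma(t))/h=|\dot\gamma|(t)$, is immediate from Lemma~\ref{T3}, which guarantees that the two‑sided limit $\lim_{h\to 0}d(\gamma(t+h),\gamma(t))/|h|=|\dot\gamma|(t)$ exists for a.e.\ $t$. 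Intersecting the relevant full‑measure sets produced above yields \eqref{eq11} for a.e.\ $t\in[a,b)$. There is no serious obstacle here; the only points requiring a little care are the observation that ``speed'' is a genuinely local notion, so that Lemma~\ref{T3} applies verbatim to each restriction $\gamma|_{[a,t]}$, and the measurability of $|\dot\gamma|$, which is needed in order to invoke the Lebesgue differentiation theorem.
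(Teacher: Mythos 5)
Your argument is correct, but it concludes the key equality $\dot s_\gamma=|\dot\gamma|$ a.e.\ by a different mechanism than the paper does. You apply Lemma~\ref{T3} to every restriction $\gamma|_{[a,t]}$ to obtain the integral representation $s_\gamma(t)=\int_a^t|\dot\gamma|(\tau)\,d\tau$ and then differentiate via the Lebesgue differentiation theorem at Lebesgue points of $|\dot\gamma|$. The paper instead applies Lemma~\ref{T3} only to the whole interval, notes the easy pointwise inequality $\dot s_\gamma(t)\ge|\dot\gamma|(t)$ a.e.\ (since the length of a subarc dominates the chordal distance), and compares integrals: $\int_a^b\dot s_\gamma=s_\gamma(b)-s_\gamma(a)=\ell(\gamma)=\int_a^b|\dot\gamma|$, which forces the a.e.\ pointwise equality because the difference is nonnegative with zero integral. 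Both routes are short and rest on Lemma~\ref{T3}, but the paper's integral-comparison trick avoids invoking the Lebesgue differentiation theorem and avoids having to discuss the measurability of $|\dot\gamma|$ explicitly (it only needs the Lipschitz fundamental theorem of calculus for $s_\gamma$), while your version is perhaps closer to the classical arc-length computation and makes transparent exactly which full-measure set of $t$ the conclusion holds on. Your remark that ``speed'' is a two-sided, hence local, notion—so that Lemma~\ref{T3} transfers verbatim to each restriction $\gamma|_{[a,t]}$—is the right thing to flag, and the measurability of $|\dot\gamma|$ as an a.e.\ limit of measurable difference quotients is handled correctly.
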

\begin{proof}
Let $\gamma$ be $L$-Lipschitz.
For $a\leq t_1\leq t_2\leq b$ we have
$$
|s_\gamma(t_2)-s_\gamma(t_1)|=\ell(\gamma|_{[t_1,t_2]})\leq L|t_2-t_1|
$$
which proves that $s_\gamma$ is Lipschitz. In particular $s_\gamma$ is differentiable almost everywhere. Since the length of a curve connecting two points is no less than the distance between the points, $\dot{s}_\gamma\geq |\dot{\gamma}|$ almost everywhere. This and the equality
$$
\int_a^b |\dot{\gamma}|(t)\, dt = \ell(\gamma)=s_\gamma(b)-s_\gamma(a)=
\int_a^b \dot{s}_\gamma(t)\, dt
$$
proves that $\dot{s}_\gamma=|\dot{\gamma}|$ almost everywhere which is \eqref{eq11}.
\end{proof}
\begin{lemma}
\label{T20}
Let $\{v_i\}_{i=1}^\infty\subset\bbbs^{n-1}$ be a countable and a dense subset of the (Euclidean) unit sphere.
Let $f:\bbbr^n\supset\Omega\to X$ be a Lipschitz mapping from an open set into  
a metric space. For $x\in\Omega$ and $v\in \bbbs^{n-1}$ let $\gamma_{x,v}(t)=f(x+tv)$. Then for almost all $x\in\Omega$ and all $i=1,2,\ldots$ we have
$$
\md (f,x)(v_i)=\lim_{t\to 0^+}\frac{\ell(\gamma_{x,v_i}|_{[0,t]})}{t}\, .
$$
\end{lemma}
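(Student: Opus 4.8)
The plan is to fix a single direction $v=v_i$, foliate $\Omega$ by lines parallel to $v$, apply Lemma~\ref{T19} to the restriction of $f$ to each such line, and then use Fubini's theorem to promote the a.e. conclusion on individual lines to an a.e. conclusion on $\Omega$; finally, intersecting the resulting full-measure sets over the countably many directions $v_i$ yields the lemma.

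In more detail, fix $v=v_i$ and write $\bbbr^n=v^\perp\oplus\bbbr v$, identifying a point with a pair $(w,s)$, $w\in v^\perp$, $s\in\bbbr$. For $w\in v^\perp$ set $g_w(s)=f(w+sv)$; this is a Lipschitz curve on each connected component of the open set $\{s\in\bbbr:\ w+sv\in\Omega\}$. First I would record the relevant measurability: $x\mapsto\md(f,x)(v)$ is measurable, being an a.e.\ limit of the continuous functions $x\mapsto d(f(x+tv),f(x))/|t|$ (this limit exists a.e.\ by Lemma~\ref{T4} and \eqref{eq10}); and $x\mapsto\ell(\gamma_{x,v}|_{[0,h]})$ is Borel, since the length is a supremum over partitions with rational nodes of sums of continuous functions of $x$, while $h\mapsto\ell(\gamma_{x,v}|_{[0,h]})$ is continuous. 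Hence the set
$$
A_v:=\Big\{x\in\Omega:\ \md(f,x)(v)\ \text{exists and equals}\ \lim_{h\to 0^+}\frac{\ell(\gamma_{x,v}|_{[0,h]})}{h}\Big\}
$$
is measurable, and it remains to prove $|\Omega\setminus A_v|=0$. By Fubini, this follows once we show that for a.e.\ $w\in v^\perp$ the one-dimensional slice $\{s:\ (w,s)\in\Omega\setminus A_v\}$ is null. For a.e.\ $w$—again by Fubini, applied to the full-measure set where $f$ is metrically differentiable (Lemma~\ref{T4})—we have that $f$ is metrically differentiable at $w+sv$ for a.e.\ $s$ in the slice of $\Omega$, so $\md(f,w+sv)(v)=|\dot g_w|(s)$ by \eqref{eq10}; and by Lemma~\ref{T19} applied to $g_w$ on each component interval, $|\dot g_w|(s)=\lim_{h\to 0^+}\ell(g_w|_{[s,s+h]})/h=\lim_{h\to 0^+}\ell(\gamma_{w+sv,v}|_{[0,h]})/h$ for a.e.\ such $s$. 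Combining these, a.e.\ $s$ in the slice of $\Omega$ lies in the slice of $A_v$, so the slice of $\Omega\setminus A_v$ is null, as required.

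Having carried this out for each $i$, the set $\bigcap_i A_{v_i}$ has full measure in $\Omega$, and every $x$ in it satisfies the asserted identity for all $i$. The only points that require care are the measurability claims and the two layers of Fubini (one to reduce to a single line, one to discard, on a.e.\ line, the null set where $f$ fails to be metrically differentiable); the geometric content is entirely contained in Lemma~\ref{T19}, and the non-convexity of $\Omega$ causes no difficulty since each line meets $\Omega$ in a countable union of open intervals to which Lemma~\ref{T19} applies directly. I expect the subtlest point to be the mismatch that $\md(f,x)(v)$ is defined by a two-sided limit whereas the length derivative is one-sided: this is reconciled precisely by Lemma~\ref{T19}, which equates the two-sided speed $|\dot\gamma|$ with the one-sided derivative of arc length.
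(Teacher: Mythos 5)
Your proof is correct and takes essentially the same approach as the paper: fix one direction $v=v_i$, decompose $\bbbr^n=v^\perp\times\bbbr v$, apply Lemma~\ref{T19} line by line, promote the a.e.-on-each-line conclusion to a.e.\ in $\Omega$ via Fubini, and intersect over the countable family $\{v_i\}$. The paper simplifies notation by assuming $\Omega=\bbbr^n$ and suppresses the measurability discussion that you make explicit, but the argument is otherwise identical.
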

\begin{proof}
For simplicity of notation assume that $\Omega=\bbbr^n$. 
Fix $v\in \bbbs^{n-1}$. It suffices to prove that
\begin{equation}
\label{eq33}
\md(f,x)(v)=
\lim_{t\to 0^+}\frac{\ell(\gamma_{x,v}|_{[0,t]})}{t}
\quad
\text{for almost all $x\in\bbbr^n$,}
\end{equation}
because the result will be a straightforward consequence of the fact that the union of countably many sets of measure zero has measure zero.

Let
$
W=v^\perp=\{w\in\bbbr^n:\, \langle w,v\rangle=0\}.
$
Let $\Gamma:W\times\bbbr\to X$ be defined by 
$$
\Gamma(w,t)=\Gamma_w(t)=f(w+tv).
$$
According to Lemma~\ref{T19}, for every $w\in W$ and almost all $t\in\bbbr$,
\begin{equation}
\label{eq12}
|\dot{\Gamma}_w|(t)=\lim_{h\to 0^+}\frac{\ell(\Gamma_w|_{[t,t+h]})}{h}.
\end{equation}
The Fubini theorem implies that the set $\tilde{E}\subset W\times\bbbr$ of points $(w,t)$ for which \eqref{eq12} does not hold has measure zero.
The mapping $\Phi:W\times\bbbr\to\bbbr^n$, $\Phi(w,t)=w+tv$ is a linear isometry and hence $E=\Phi(\tilde{E})$ has measure zero.

If $x\in\bbbr^n\setminus E$, $x=\Phi(w,t)=w+tv$, then
$\Gamma_w(t+h)=\gamma_{x,v}(h)$ so  \eqref{eq12} yields
$$
|\dot{\gamma}_{x,v}|(0)=|\dot{\Gamma}_w|(t)=
\lim_{h\to 0^+}\frac{\ell(\Gamma_w|_{[t,t+h]})}{h}=
\lim_{h\to 0^+}\frac{\ell(\gamma_{x,v}|_{[0,h]})}{h},
$$
and \eqref{eq33} follows, because according to \eqref{eq10}, for almost all $x\in\bbbr^n$ we have
$$
\md(f,x)(v)=
\lim_{t\to 0}\frac{d(f(x+tv),f(x))}{t}=|\dot{\gamma}_{x,v}|(0).
$$
The proof is complete.
\end{proof}
Lemma~\ref{T20} describes values of the seminorm $\md (f,x)$ on a countable and dense subset of $\bbbs^{n-1}$ and the next lemma shows that this information
completely determines a seminorm.
\begin{lemma}
\label{T21}
Let $\{v_i\}_{i=1}^\infty\subset\bbbs^{n-1}$ be a countable and a dense subset.
If $\sigma_1$, $\sigma_2$ are seminorms on $\mathbb{R}^n$, and $\sigma_1(v_i)=\sigma_2(v_i)$ for all $i=1,2,\ldots$, then $\sigma_1(v)=\sigma_2(v)$ for all $v\in\mathbb{R}^n$.
\end{lemma}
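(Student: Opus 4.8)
The plan is to reduce the problem to the continuity of seminorms together with homogeneity, so that agreement on a dense subset of the sphere propagates to all of $\bbbr^n$.

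First I would record that every seminorm $\sigma$ on $\bbbr^n$ is Lipschitz continuous, hence continuous. Indeed, writing $v=\sum_{j=1}^n v_j e_j$ and using subadditivity and absolute homogeneity, one gets $\sigma(v)\leq\sum_j |v_j|\,\sigma(e_j)\leq C|v|$ with $C=\sum_{j=1}^n\sigma(e_j)$; combined with the reverse-triangle inequality $|\sigma(v)-\sigma(w)|\leq\sigma(v-w)\leq C|v-w|$, this shows $\sigma$ is $C$-Lipschitz on $\bbbr^n$. In particular $\sigma_1$ and $\sigma_2$ are both continuous on $\bbbr^n$, so their restrictions to the compact set $\bbbs^{n-1}$ are continuous there.

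Next I would observe that $\sigma_1$ and $\sigma_2$ agree on the dense subset $\{v_i\}_{i=1}^\infty$ of $\bbbs^{n-1}$ by hypothesis; since two continuous functions that agree on a dense subset of a metric space agree on its closure, we conclude $\sigma_1(v)=\sigma_2(v)$ for every $v\in\bbbs^{n-1}$. Finally, for an arbitrary nonzero $v\in\bbbr^n$, absolute homogeneity gives
$$
\sigma_1(v)=|v|\,\sigma_1\!\left(\tfrac{v}{|v|}\right)=|v|\,\sigma_2\!\left(\tfrac{v}{|v|}\right)=\sigma_2(v),
$$
and for $v=0$ both sides vanish. Hence $\sigma_1=\sigma_2$ on all of $\bbbr^n$.

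There is essentially no serious obstacle here: the only point that requires a word of justification is that a seminorm on $\bbbr^n$ is automatically continuous, which follows from the elementary Lipschitz estimate above. Everything else is the standard fact that continuous functions agreeing on a dense set coincide, combined with homogeneity to pass from the sphere to all of $\bbbr^n$.
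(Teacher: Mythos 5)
Your proof is correct and takes essentially the same approach as the paper: establish the Lipschitz estimate $|\sigma(u)-\sigma(v)|\leq C|u-v|$ for seminorms, then combine continuity on a dense set with absolute homogeneity. The only cosmetic difference is that the paper applies homogeneity first (to produce a dense subset of all of $\bbbr^n$) and then uses continuity, while you use continuity on the sphere first and then scale; these are interchangeable.
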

\begin{proof}
Since $\sigma_1(tv_i)=\sigma_2(tv_i)$ for all $i\in\bbbn$ and $t\in\bbbr$, it follows that the equality holds on the set $E=\{tv_i:\, t\in\bbbr,i\in\bbbn\}$
that is dense in $\bbbr^n$ in the Euclidan metric.
It is a routine exercise to show that any seminorm $\sigma$ is bounded by the Euclidean norm, that is $\sigma(v)\leq C|v|$. Therefore,
$|\sigma(u)-\sigma(v)|\leq \sigma(u-v)\leq C|u-v|$. For $v\in\bbbr^n$ choose $w_k\in E$ such that $|v-w_k|\to 0$. Then $\sigma(w_k)\to \sigma(v)$, so passing to the limit in  
$\sigma_1(w_k)=\sigma_2(w_k)$ as $k\to\infty$, yields the result.
\end{proof}

\begin{proof}[Proof of Theorem~\ref{T18}]
Fix a countable and a dense set $\{ v_i\}_{i=1}^\infty\subset\bbbs^{n-1}$. According to Lemma~\ref{T20}, for almost all $x$ and all $i$,
$$
\md (f,x)(v_i)=\lim_{t\to 0^+}\frac{\ell_X(\gamma_{x,v_i}|_{[0,t]})}{t}
\quad
\text{and}
\quad
\md (\Phi\circ f,x)(v_i)=\lim_{t\to 0^+}\frac{\ell_Y(\Phi\circ\gamma_{x,v_i}|_{[0,t]})}{t}\, .
$$
Therefore \eqref{eq9} implies that
$$
\md(\Phi\circ f,x)(v_i)=\md (f,x)(v_i)
$$
and (a) follows from Lemma~\ref{T21}, while (b) is an immediate consequence of (a).
Finally, we show that (c) and (d) are consequences of Lemma~\ref{T8}.
Since (d) easily follows from (c) it remains to prove (c).

Given a Borel function $g:\tilde{X}\to [0,\infty]$, let $G:\tilde{X}\to [0,\infty]$ be defined by
$$
G(y)=\frac{g(y)}{\H^0(f^{-1}(y))}\, .
$$
Then, we have
\begin{equation*}
\begin{split}
& 
\int_{\tilde{X}} g(y)\, d\H^n(y) =
\int_{\tilde{X}} G(y)\H^0(f^{-1}(y))\, d\H^n(y) \stackrel{\eqref{eq15}}{=}
\int_\Omega G(f(x))J_n(\md(f,x))\, d\H^n(x)\\
&=
\int_\Omega G(f(x))J_n(\md(\Phi\circ f,x))\, d\H^n(x) \stackrel{\eqref{eq14}}{=}
\int_{\Phi(\tilde{X})} \sum_{x\in(\Phi\circ f)^{-1}(y)} G(f(x))\, d\H^n(y).
\end{split}    
\end{equation*}
We used here the fact that $\Phi\circ f$ is locally Lipschitz continuous.
It remains to observe that
$$
(\Phi\circ f)^{-1}(y)=f^{-1}(\Phi^{-1}(y))=\bigcup_{z\in \Phi^{-1}(y)} f^{-1}(z)=\bigcup_{z\in \Phi^{-1}(y)\cap\tilde{X}} f^{-1}(z),
$$
because $f^{-1}(z)=\varnothing$ if $z\not\in\tilde{X}$,
and hence for $y\in \Phi(\tilde{X})$ we have
\begin{equation*}
\begin{split}
\sum_{x\in(\Phi\circ f)^{-1}(y)} G(f(x)) 
& =
\sum_{z\in\Phi^{-1}(y)\cap\tilde{X}}\sum_{x\in f^{-1}(z)} G(f(x))\\
&=
\sum_{z\in\Phi^{-1}(y)\cap\tilde{X}} G(z)\H^0(f^{-1}(z))
=
\sum_{z\in\Phi^{-1}(y)\cap\tilde{X}} g(z).
\end{split}
\end{equation*}
The proof is complete.
\end{proof}

\section{Topological dimension}
\label{TD}
The {\em topological dimension} (small inductive dimension) $\dim X$ of a metric space $X$ is defined as follows:
\begin{itemize}
\item $\dim X$ is an integer greater than or equal to $-1$ or $\dim X=\infty$.
\item $\dim X=-1$ if and only if $X=\varnothing$.
\item $\dim X\leq n$ if every point in $X$ has an arbitrarily small neighborhood whose boundary has dimension $\leq n-1$. 
\item $\dim X=n$ if $\dim X\leq n$ and it is not true that $\dim X\leq n-1$.
\item $\dim X=\infty$ if $\dim X\leq n$ is false for all integers $n\geq -1$.
\end{itemize}
There are many other definitions of the topological dimension. They are equivalent to the above one if $X$ is a separable metric space, see \cite{engelking,HW}.

The next result is well known; see e.g., \cite[Theorem~2]{andreevb}.
\begin{lemma}
\label{T11}
If an $\bbbr$-tree $T$ has at least two points, then $\dim T=1$.
\end{lemma}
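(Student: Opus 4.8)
The plan is to prove the two bounds $\dim T\geq 1$ and $\dim T\leq 1$ separately, using only the description of an $\bbbr$-tree as a geodesic space in which any two points are joined by a unique arc and in which every geodesic triangle is a tripod (Lemma~\ref{T33}(a),(d)), together with two standard facts about the small inductive dimension of metric spaces: it is monotone under passing to subspaces, and $\dim Z\leq 0$ holds precisely when every point of $Z$ has arbitrarily small neighborhoods that are simultaneously open and closed (see \cite{engelking,HW}). I will also use $\dim[0,1]=1$. For the lower bound: since $T$ has two distinct points $x,y$, the geodesic joining them is an isometric embedding of $[0,d(x,y)]$, so $T$ contains a subspace homeomorphic to $[0,1]$; by monotonicity $\dim T\geq 1$, and in particular $\dim T\leq 0$ is false.

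For the upper bound I would show that, for each $p\in T$, the open balls $B(p,r)$, $r>0$, form a neighborhood basis at $p$ whose boundaries are zero-dimensional, which by definition gives $\dim T\leq 1$. As $B(p,r)$ is open, $\partial B(p,r)\subset S(p,r):=\{x\in T:\, d(p,x)=r\}$, so by monotonicity it suffices to prove $\dim S(p,r)\leq 0$. Fix $x\in S(p,r)$ and $\delta>0$, parametrize the geodesic from $p$ to $x$ by arclength as $c\colon[0,r]\to T$, choose $s\in(\max\{0,r-\delta/2\},r)$, and set $q=c(s)$. Since in an $\bbbr$-tree $d(p,q)+d(q,y)=d(p,y)$ holds iff $q$ lies on the arc $[p,y]$, one has $d(q,y)\geq r-s$ for every $y\in S(p,r)$, and
$$
V:=\{y\in S(p,r):\, d(q,y)=r-s\}
$$
is precisely the set of those $y\in S(p,r)$ whose geodesic from $p$ passes through $q$. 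Then $x\in V$, $V\subset B(x,2(r-s))\subset B(x,\delta)$, and $V$ is closed in $S(p,r)$ because $d(q,\cdot)$ is continuous and bounded below by $r-s$ there. The crucial point is that $V$ is also open in $S(p,r)$: if $y\in V$ and $S(p,r)\ni y_n\to y$, let $m_n$ be the center of the tripod spanned by $p,y,y_n$; since $d(p,y)=d(p,y_n)=r$, the tripod relations give $d(y,y_n)=2\bigl(r-d(p,m_n)\bigr)$, whence $d(p,m_n)\to r$, so for large $n$ we have $d(p,m_n)>s$ and therefore $q=c(s)\in[p,m_n]\subset[p,y_n]$, i.e.\ $y_n\in V$. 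Thus every point of $S(p,r)$ has arbitrarily small open-and-closed neighborhoods, so $\dim S(p,r)\leq 0$; this yields $\dim T\leq 1$, and together with the lower bound, $\dim T=1$.

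The step I expect to be the only genuine obstacle is the openness of $V$ in $S(p,r)$: this is the single place where the $\bbbr$-tree structure is really exploited, through the fact that two geodesics emanating from $p$ share an initial segment whose length controls the distance between their far endpoints. Everything else is routine bookkeeping with the elementary properties of $\dim$ for metric spaces.
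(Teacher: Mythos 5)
Your proof is correct and follows essentially the same route as the paper's sketch: the geodesic segment between two distinct points gives $\dim T\geq 1$, and the upper bound comes from showing that boundaries of balls in $T$ (hence the spheres $S(p,r)$) are zero-dimensional by producing arbitrarily small clopen neighborhoods. The paper phrases this last step by observing that $S(p,r)$ carries an ultrametric, so that its balls are automatically clopen; your set $V=\{y\in S(p,r):d(q,y)=r-s\}$ is precisely a ball of radius $2(r-s)$ in that ultrametric, and your tripod computation simply unpacks the ultrametric argument by hand.
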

\begin{proof}[Sketch of a proof]
Clearly $\dim T\geq 1$, since $T$ contains a segment. To prove that $\dim T\leq 1$ it suffices to show that boundaries of balls in $T$ have dimension $\leq 0$. It is not difficult to prove that boundaries of balls in $T$ are ultrametric spaces and every non-empty ultrametric space has dimension $0$, because balls in ultrmetric spaces are clopen.
\end{proof}

The following result is due to Szpilrajn \cite{szpil}. For a proof see \cite[Theorem~8.15]{heinonen} 
\begin{lemma}
\label{T12}
If a metric space $X$ satisfies $\H^{n+1}(X)=0$, where $n\geq -1$ is an integer, then
$\dim X\leq n$.
\end{lemma}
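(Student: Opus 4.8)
The plan is to induct on $n$. For $n=-1$ the hypothesis $\H^{0}(X)=0$ says that the counting measure of $X$ vanishes, so $X=\varnothing$ and $\dim X=-1$ by definition. Assume the statement holds with $n-1$ in place of $n$, and suppose $\H^{n+1}(X)=0$. By the definition of topological dimension, to prove $\dim X\le n$ it suffices to show that every $x\in X$ has arbitrarily small open neighborhoods whose boundary has dimension $\le n-1$; and by the inductive hypothesis it is enough to produce, for each $x\in X$ and each $\eps>0$, a radius $r\in(0,\eps)$ with $\H^{n}\big(\partial B(x,r)\big)=0$. I would take these neighborhoods to be metric balls $B(x,r)$.

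The one substantial ingredient is a coarea (Eilenberg-type) estimate for the $1$-Lipschitz function $\rho_x(y)=d(x,y)$. Writing $S_r=\{y\in X:d(x,y)=r\}$, the assertion is that $\H^{n}(S_r)=0$ for almost every $r>0$. I would prove this directly: since $\H^{n+1}(X)=0$ we have $\H^{n+1}_\delta(X)=0$ for every $\delta>0$, so given $\delta,\eta>0$ we can cover $X\subset\bigcup_i A_i$ with $\diam A_i\le\delta$ and $\sum_i(\diam A_i)^{n+1}<\eta$. Because $\rho_x$ is $1$-Lipschitz, $\rho_x(A_i)$ lies in an interval of length $\le\diam A_i$, so for each $r$ the sets $A_i$ meeting $S_r$ cover $S_r$ by sets of diameter $\le\delta$; integrating in $r$ (Fubini) gives
\[
\int_0^\infty \H^{n}_\delta(S_r)\,dr\;\le\;\frac{\omega_n}{2^n}\sum_i(\diam A_i)^{n}\cdot\diam A_i\;\le\;\frac{\omega_n}{2^n}\,\eta.
\]
Letting $\eta\to0$ gives $\H^{n}_\delta(S_r)=0$ for a.e.\ $r$ and each fixed $\delta$; running $\delta$ through a sequence tending to $0$ and using $\H^{n}_\delta(S_r)\uparrow\H^{n}(S_r)$ yields $\H^{n}(S_r)=0$ for a.e.\ $r>0$. (Alternatively one can just quote the Eilenberg inequality.) I expect this estimate to be the real content of the lemma; everything else is bookkeeping with the inductive definition of $\dim$.

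To finish, fix $x\in X$ and $\eps>0$, and choose $r\in(0,\eps)$ with $\H^{n}(S_r)=0$ --- such $r$ exists by the previous step. Since $\overline{B(x,r)}\subset\{y:d(x,y)\le r\}$, we have $\partial B(x,r)\subset S_r$, hence $\H^{n}(\partial B(x,r))=0$, and the inductive hypothesis gives $\dim\partial B(x,r)\le n-1$. Because $\eps>0$ was arbitrary, every point of $X$ has arbitrarily small neighborhoods whose boundary has dimension $\le n-1$, i.e.\ $\dim X\le n$, completing the induction.
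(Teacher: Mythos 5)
The paper gives no proof of this lemma itself; it cites Szpilrajn and Heinonen \cite[Theorem~8.15]{heinonen}, and your argument is precisely the standard proof from that source: an Eilenberg-type coarea estimate for the $1$-Lipschitz function $\rho_x(y)=d(x,y)$ showing $\H^n(S_r)=0$ for a.e.\ $r$, the inclusion $\partial B(x,r)\subset S_r$, and induction on the small inductive dimension. The proof is correct; the only cosmetic point is that $\int_0^\infty\H^n_\delta(S_r)\,dr$ should be read as an upper integral, since measurability of $r\mapsto\H^n_\delta(S_r)$ is not a priori clear, but your domination by the measurable function $\frac{\omega_n}{2^n}\sum_i(\diam A_i)^n\chi_{I_i}(r)$ already supplies the needed conclusion.
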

On the other hand any Cantor set has topological dimension $0$, but one can construct Cantor sets of infinite Hausdorff dimension. Thus in general, information about the topological dimension does not give any upper estimate for the Hausforff dimension, except the situation described in Theorem~\ref{T15}.

\begin{theorem}
\label{T15}
Suppose that $f:\mathbb{R}^n\supset\Omega\to X$ is a Lipschitz continuous map from an open set onto a metric space $X$, $f(\Omega)=X$. Then, $\dim X=n$ if and only if $\mathcal{H}^n(X)>0$.
\end{theorem}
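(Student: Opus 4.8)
The plan is to prove the two implications separately, using the machinery already assembled in the paper. The easy direction is $\mathcal{H}^n(X)>0 \Rightarrow \dim X = n$: here I would argue that $\dim X \le n$ always holds because $X = f(\Omega)$ is a Lipschitz image of an $n$-dimensional set, hence $\mathcal{H}^{n+1}(X) = 0$ (Lipschitz maps do not increase Hausdorff dimension), so Lemma~\ref{T12} (Szpilrajn) gives $\dim X \le n$. Combined with the hypothesis $\mathcal{H}^n(X) > 0$, we cannot have $\dim X \le n-1$, since that would force $\mathcal{H}^n(X) = 0$ by Lemma~\ref{T12} again. Hence $\dim X = n$. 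One technical point: $\Omega$ may be written as a countable union of bounded open sets, and $f$ restricted to each is Lipschitz, so $\mathcal{H}^{n+1}(f(\Omega)) = 0$ follows by countable subadditivity.

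The substantive direction is $\dim X = n \Rightarrow \mathcal{H}^n(X) > 0$, equivalently its contrapositive: if $\mathcal{H}^n(X) = 0$, then $\dim X \le n-1$. The key input is the not-yet-stated Theorem~\ref{T16} (announced in the introduction as Theorem~\ref{TM3}): if $f:\Omega \to X$ is Lipschitz and $\dim X = k$, then $\rank \md(f,x) \le k$ a.e. But for the present theorem I want the converse flavor — I would instead argue directly. Suppose $\mathcal{H}^n(X) = 0 = \mathcal{H}^n(f(\Omega))$. By Corollary~\ref{T9}, $\mathcal{H}^n(f(\Omega)) > 0$ if and only if $\rank \md(f,x) = n$ on a set of positive measure; so $\mathcal{H}^n(f(\Omega)) = 0$ forces $\rank \md(f,x) \le n-1$ almost everywhere on $\Omega$. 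Now I need to promote this pointwise rank bound into the topological statement $\dim X \le n-1$, and this is exactly where Theorem~\ref{T16}/Theorem~\ref{TM3} enters in its natural form — or rather, I need the reverse implication, which is the real content. Let me reconsider: the clean route is that Theorem~\ref{T16} says $\dim X = k \Rightarrow \rank\md(f,\cdot) \le k$ a.e.; I want to contrapose. If $\dim X \ge n$ (note $\dim X \le n$ is already known from the first paragraph, so $\dim X \ge n$ means $\dim X = n$), I must produce a set of positive measure where $\rank \md(f,x) = n$, equivalently (Corollary~\ref{T9}) show $\mathcal{H}^n(X) > 0$.

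So the logical skeleton should be: (i) $\dim X \le n$ always, via Szpilrajn applied to $\mathcal{H}^{n+1}(X) = 0$; (ii) if $\mathcal{H}^n(X) = 0$ then by Corollary~\ref{T9} we get $\rank \md(f,x) \le n-1$ a.e.; (iii) invoke the covering estimate of Proposition~\ref{Tl14} together with Lemma~\ref{T12} to conclude $\dim X \le n-1$ — specifically, Proposition~\ref{Tl14} lets one cover $f(\Omega)$ efficiently enough that $\mathcal{H}^n(f(\Omega)) = 0$ is upgraded, but what we actually need is to show a better Hausdorff content bound, or alternatively to directly apply Theorem~\ref{T16} with $k = n-1$. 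Indeed Theorem~\ref{T16} is the right black box: I expect Theorem~\ref{T15} to be deduced from Theorem~\ref{T16} by taking $k$ minimal, since the introduction explicitly says ``This result is a consequence of known facts about topological dimension and the following new result (Theorem~\ref{T16}).'' Thus the proof I would write is: assume $\mathcal{H}^n(X) = 0$; then $\mathcal{H}^n(f(\Omega)) = 0$, so by the Kirchheim area formula (Lemma~\ref{T8}) / Corollary~\ref{T9}, $\rank \md(f,x) \le n-1$ a.e. If moreover $\dim X = n$, apply Theorem~\ref{T16} — wait, that goes the wrong way. The honest resolution: the forward direction of Theorem~\ref{T15} uses that $\mathcal{H}^n(X) > 0 \Rightarrow \dim X \ge n$ is immediate from Lemma~\ref{T12}, and $\dim X \le n$ from $\mathcal{H}^{n+1}(X) = 0$; the reverse direction $\dim X = n \Rightarrow \mathcal{H}^n(X) > 0$ is the contrapositive of: $\mathcal{H}^n(X) = 0 \Rightarrow \dim X \le n-1$, and \emph{this} is what needs a real argument, presumably via Theorem~\ref{T16} showing that small Hausdorff measure forces small rank which forces (by a separate argument, essentially running Theorem~\ref{T16}'s proof in reverse or using a dimension-reduction induction) small topological dimension. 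The main obstacle is precisely this last step — converting ``$\rank \md(f,x) \le n-1$ a.e.'' into ``$\dim X \le n-1$'' — since rank of the metric derivative is an infinitesimal/a.e. notion while topological dimension is global, and bridging them is the heart of the matter; I would expect it to require covering $X$ by images of lower-dimensional pieces (slicing $\Omega$ by hyperplanes, using Fubini, and the fact that the restricted maps land in a set of vanishing $\mathcal{H}^{n-1+1} = \mathcal{H}^n$-measure only on slices) combined with an inductive application of Lemma~\ref{T12}.
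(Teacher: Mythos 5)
There is a genuine gap in your proposal, and it stems from using Lemma~\ref{T12} in the wrong direction. You write, in what you call the ``easy direction'' $\mathcal{H}^n(X)>0 \Rightarrow \dim X = n$, that ``we cannot have $\dim X \le n-1$, since that would force $\mathcal{H}^n(X) = 0$ by Lemma~\ref{T12} again.'' That is the \emph{converse} of Szpilrajn's theorem, and it is false: topological dimension places no upper bound on Hausdorff dimension (the paper explicitly remarks, just before Theorem~\ref{T15}, that Cantor sets have topological dimension $0$ but can have infinite Hausdorff dimension). This mistake causes you to mislabel the two implications. In fact $\dim X = n \Rightarrow \mathcal{H}^n(X)>0$ is the easy one: it is literally the contrapositive of Lemma~\ref{T12} applied with index $n-1$, namely $\mathcal{H}^n(X)=0 \Rightarrow \dim X \le n-1$. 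No use of Theorem~\ref{T16} is required for this direction, and your attempts to bring $\md$-rank and Theorem~\ref{T16} into it are unnecessary.

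The genuinely substantive direction is $\mathcal{H}^n(X)>0 \Rightarrow \dim X = n$. Here $\dim X \le n$ is immediate from $\mathcal{H}^{n+1}(X)=0$ and Lemma~\ref{T12} (you had this part right), and the real content is ruling out $\dim X \le n-1$. The paper's argument is a contradiction that uses Theorem~\ref{T16} in exactly the direction it is stated: assuming $\dim X = k \le n-1$, Theorem~\ref{T16} gives $\rank\md(f,x)\le k\le n-1$ almost everywhere, while Corollary~\ref{T9} together with $\mathcal{H}^n(X)=\mathcal{H}^n(f(\Omega))>0$ forces $\rank\md(f,x)=n$ on a set of positive measure --- a contradiction. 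You circle near this (you cite Corollary~\ref{T9} and Theorem~\ref{T16}), but you repeatedly frame the needed step as ``converting $\rank\md(f,x)\le n-1$ a.e.\ into $\dim X\le n-1$,'' which is a reversed and much harder statement that the proof never needs; the correct move is to assume $\dim X\le n-1$ and derive the rank bound from Theorem~\ref{T16}, not the other way around. Your proposal therefore does not assemble a correct proof as written.
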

\begin{proof}
It follows from Lemma~\ref{T12} that if $\dim X=n$, then $\H^n(X)>0$. Thus it remains to show the opposite implication. Suppose that $\H^n(X)>0$.
Since $\H^{n+1}(X)=0$ ($X$ is a Lipschitz image of a subset of $\bbbr^n$), Lemma~\ref{T12} implies that $\dim X\leq n$ and it suffices to show that the inequality $\dim X\leq n-1$ is false.
Suppose to the contrary that $\dim X\leq n-1$.
The inequality $\H^n(X)>0$ and Corollary~\ref{T9} imply that $\rank\md(f,x)=n$ on a set of positive measure. Hence the result is a direct consequence of Theorem~\ref{T16} applied to $\dim X=k\leq n-1$.
Indeed, Theorem~\ref{T16} implies that $\rank\md(f,x)\leq k\leq n-1$ a.e. which contradicts the fact that $\rank\md(f,x)=n$ on a set of positive measure.
\end{proof}

\begin{theorem}
\label{T16}
If $f:\bbbr^n\supset\Omega\to X$ is a Lipschitz map from an open set to a metric space $X$ of topological dimension $\dim X=k$, then $\rank\md(f,x)\leq k$ for almost all $x\in\Omega$.
\end{theorem}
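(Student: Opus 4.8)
The plan is to argue by contradiction and, in the spirit of the proof of Proposition~\ref{T28}, to turn a rank condition on the metric derivative into a topological statement by means of Lemma~\ref{T30}. Assume that $\rank\md(f,x)\ge k+1$ on a set of positive measure; we may assume $k$ is finite and $k\le n-1$, since otherwise $\rank\md(f,x)\le n\le k$ holds trivially. Composing $f$ with an isometric embedding $f(\Omega)\hookrightarrow\ell^\infty$ (Lemma~\ref{T5}) changes neither the metric derivative nor, up to isometry, the images $f(S)$ for $S\subset\Omega$, so we may assume $f=(f_1,f_2,\ldots)\colon\Omega\to\ell^\infty$, while keeping in mind that every $f(S)$ is isometric to a subset of the original space and hence has topological dimension at most $\dim X=k$. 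It therefore suffices to produce a compact set $\bar D\subset\Omega$ with $\dim f(\bar D)\ge k+1$.

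First I localize the rank condition to a nonvanishing Jacobian minor. By Lemma~\ref{T7}, for a.e.\ $x$ the row rank of the componentwise derivative $Df(x)$ equals $\rank\md(f,x)$, so on a set of positive measure some $(k+1)\times(k+1)$ minor of $Df(x)$ is nonzero; since there are only countably many such minors, a \emph{fixed} choice of components $i_1<\dots<i_{k+1}$ and of $k+1$ coordinate directions in $\bbbr^n$ works on a set $E$ of positive measure. After an orthogonal change of variables we may take those directions to be the first $k+1$; writing $\pi\colon\ell^\infty\to\bbbr^{k+1}$ for the (Lipschitz) projection onto the coordinates $i_1,\dots,i_{k+1}$ and $F:=\pi\circ f$, we have $\det[\partial_jF_i(x)]_{1\le i,j\le k+1}\ne 0$ for all $x\in E$. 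Choose $x_o\in E$ at which the Lipschitz map $F$ is (fully) differentiable; by an affine change of variables in the source and an affine automorphism of $\bbbr^{k+1}$ applied to $F$ (which only replaces $\pi$ by another Lipschitz map $\ell^\infty\to\bbbr^{k+1}$) we arrange $x_o=0$, $F(0)=0$, and that the block $[\partial_jF_i(0)]_{1\le i,j\le k+1}$ is the identity. Writing $\bbbr^n=\bbbr^{k+1}\times\bbbr^{n-k-1}\ni(x',x'')$, differentiability of $F$ at $0$ then gives $F(x',0)=x'+o(|x'|)$.

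The key step is to apply Lemma~\ref{T30} so as to obtain not mere surjectivity but a \emph{stable value}. Fix $\eps>0$ small enough that $\bar B^{k+1}(0,\eps)\times\{0\}\subset\Omega$ and $|F(x',0)-x'|<\eps/4$ whenever $|x'|=\eps$, and set $\bar D:=\bar B^{k+1}(0,\eps)\times\{0\}$. For any continuous $\phi\colon f(\bar D)\to\bbbr^{k+1}$ with $\sup_{f(\bar D)}|\phi-\pi|<\eps/4$, the continuous map $x'\mapsto\phi(f(x',0))$ on $\bar B^{k+1}(0,\eps)$ satisfies, on the sphere $|x'|=\eps$,
$$
|\phi(f(x',0))-x'|\le|\phi(f(x',0))-\pi(f(x',0))|+|F(x',0)-x'|<\tfrac{\eps}{4}+\tfrac{\eps}{4}=\tfrac{\eps}{2},
$$
so Lemma~\ref{T30} yields $\bar B^{k+1}(0,\eps/2)\subset\phi(f(\bar D))$; in particular $0\in\phi(f(\bar D))$. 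Hence $0$ is a \emph{stable value} of the continuous map $\pi|_{f(\bar D)}\colon f(\bar D)\to\bbbr^{k+1}$, that is, $0$ lies in the image of every continuous map into $\bbbr^{k+1}$ that is sufficiently uniformly close to $\pi|_{f(\bar D)}$. Since $f(\bar D)$ is a compact metric space admitting a continuous map to $\bbbr^{k+1}$ with a stable value, the classical characterization of topological dimension via stable values (equivalently, via essential maps onto cubes; see \cite{HW,engelking}) gives $\dim f(\bar D)\ge k+1$, the desired contradiction.

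I expect this analysis-to-topology passage to be the crux, and Lemma~\ref{T30} is exactly the tool the argument needs. The hypothesis $\rank\md(f,x)\ge k+1$ controls $f$ only up to null sets and only along line segments, so a bare ``$f$ covers a $(k+1)$-ball'' conclusion would be worthless: continuous, even Lipschitz, maps routinely raise the topological dimension (space-filling curves). What saves the argument is that Lemma~\ref{T30} provides a \emph{robust} covering --- not only $F(\,\cdot\,,0)$ but every nearby map is onto a fixed ball --- which is precisely the assertion that $0$ is a stable value; and, unlike plain surjectivity, a stable value descends from $F(\,\cdot\,,0)$ to $\pi|_{f(\bar D)}$ via $\phi\mapsto\phi\circ f$, and stable values are exactly what detects topological dimension. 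The remaining ingredients --- the countable selection of a nonvanishing minor, the affine normalizations, and restricting differentiability of $F$ to the slice $\{x''=0\}$ --- are routine and closely parallel the proof of Proposition~\ref{T28}.
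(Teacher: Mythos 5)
Your proof is correct and follows essentially the same route as the paper's: both reduce to a nonvanishing $(k+1)\times(k+1)$ minor via Lemma~\ref{T7}, restrict to a $(k+1)$-dimensional slice through a differentiability point, normalize so that $F=\pi\circ f$ is approximately the identity there, and then apply Lemma~\ref{T30} to extract a dimension-theoretic contradiction. The only genuine difference is in how the topological input is packaged. The paper uses Lemma~\ref{T14}, the Hurewicz--Wallman characterization of $\dim\le k$ via extension of $\bbbs^k$-valued maps from closed subsets: it builds $h\colon f(S^k(0,r))\to\bbbs^k$, supposes an extension $H$ exists, and derives a contradiction by applying Lemma~\ref{T30} to $x\mapsto H(f(rx))$, whose image lies in the sphere yet must cover a ball. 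You instead invoke the dual stable-value (essential-map) characterization: show directly that $0$ is a stable value of $\pi|_{f(\bar D)}\colon f(\bar D)\to\bbbr^{k+1}$ by applying Lemma~\ref{T30} to $x'\mapsto\phi(f(x',0))$ for every nearby competitor $\phi$, then conclude $\dim f(\bar D)\ge k+1$. Both characterizations are classical (Hurewicz--Wallman, Ch.~VI; Engelking) and logically interchangeable, so this is a repackaging rather than a new idea. Your version is arguably a bit more streamlined because it avoids the nested proof by contradiction (suppose $H$ exists), at the small cost of invoking a dimension-theoretic theorem that is not already stated in the paper as a lemma. One minor observation: you establish a stronger conclusion than you need -- every admissible $\phi$ covers the entire ball $\bar B^{k+1}(0,\eps/2)$, not merely the single point $0$ -- which is harmless but worth noting is more than stability requires.
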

\begin{proof}
The result is obvious if $k\geq n$ so we may assume that $k<n$. 
Let $\tilde{X}=f(\Omega)$. The space $\tilde{X}$ is separable and $\dim\tilde{X}\leq k$. Separability of $\tilde{X}$ allows us to assume that $\tilde{X}\subset\ell^\infty$ and $f=(f_1,f_2,\ldots):\Omega\to\ell^\infty$.

Suppose to the contrary that $\rank\md(f,x)\geq k+1$ on a set of positive measure. 
We will arrive to a contradiction by showing that $\dim \tilde{X}>k$.
To this end we shall need the following classical result \cite[Theorem~1.9.3]{engelking}, \cite[Theorem~ VI.4]{HW}.
\begin{lemma}
\label{T14}
A separable metric space $X$ has topological dimension less than or equal $k$, $k\geq 0$, if and only if for each closed set $C\subset X$ and a continuous map $h:C\to\bbbs^{k}$, there is a continuous extension $H:X\to\bbbs^{k}$ of $h$.
\end{lemma}
Thus it remains to find a closed set $C\subset \tilde{X}$ and a continuous map $h:C\to\bbbs^{k}$ that has no continuous extension $H:\tilde{X}\to\bbbs^{k}$.

It follows from Lemma~\ref{T7} that a certain $(k+1)\times (k+1)$ minor of the componentwise derivative $Df$ is non-zero on a set of positive measure. After relabeling indices, translating $\Omega$, and translating the image in $\ell^\infty$ we may assume that $0\in\Omega$, $f(0)=0$ and that the function
$$
F=(f_1,\ldots,f_{k+1})\big|_{\tilde{\Omega}}:\tilde{\Omega}\to\bbbr^{k+1},
\qquad
\tilde{\Omega}=\Omega\cap\{(x_1,\ldots,x_{k+1},0,\ldots,0)\}\subset\bbbr^{k+1}
$$
is differentiable at $0\in\tilde{\Omega}$ with 
$\det DF(0)\neq 0$. Further, replacing $f$ by $f\circ (DF(0))^{-1}$, we may assume that $0\in\tilde{\Omega}$, and
$$
F(0)=0,
\quad
DF(0)=I
\quad
\text{(identity matrix).}
$$
Therefore, there is $r>0$ such that $\bar{B}^{k+1}(0,r)\subset\tilde{\Omega}$ and
\begin{equation}
\label{eq2}
|F(x)-x|< r/4
\quad
\text{whenever $|x|=r$.}
\end{equation}
Let $\pi:\ell^\infty\to\bbbr^{k+1}$, $\pi(x_1,x_2,\ldots)=(x_1,x_2,\ldots,x_{k+1})$ be the projection on the first $k+1$ components, so $F=\pi\circ (f|_{\tilde{\Omega}})$. 

Let $C=f(S^{k}(0,r))\subset\tilde{X}\subset\ell^\infty$, where $S^k(0,r)=\partial\bar{B}^{k+1}(0,r)\subset\tilde{\Omega}$.
Note that if $y\in C$, then $\pi(y)\neq 0$. Indeed, $y=f(x)$, $|x|=r$ so $\pi(y)=F(x)$ and hence
$$
|\pi(y)|\geq |x|-|\pi(y)-x|=r-|F(x)-x|> \frac{3r}{4}>0.
$$
Therefore,
$$
h:C\to\bbbs^{k},
\qquad
h(y)=\frac{\pi(y)}{|\pi(y)|}
$$
is well defined and continuous. It remains to show that there is no continuous extension $H:\tilde{X}\to\bbbs^{k}$ of $h$. Suppose, by way of contradiction, that such $H$ exists. Then
\begin{equation}
\label{eq3}    
g:\bar{B}^{k+1}(0,1)\to\bbbs^{k},
\qquad
g(x)=H(f(rx))
\end{equation}
is well defined and continuous. 

If $|x|=1$, then $rx\in S^{k}(0,r)$, so $f(rx)\in C$ and hence
$$
g(x)=h(f(rx))=\frac{F(rx)}{|F(rx)|},
\quad
\text{whenever $|x|=1$.}
$$
It suffices now to show that the map
\begin{equation}
\label{eq4}
g\big|_{\bbbs^{k}(0,1)}:\bbbs^{k}\to\bbbs^{k},
\qquad
g(x)=\frac{F(rx)}{|F(rx)|}
\end{equation}
satisfies
\begin{equation}
\label{eq5}
|g(x)-x|< \frac{1}{2}
\quad
\text{whenever $|x|=1$.}
\end{equation}
Indeed, since $g:\bar{B}^{k+1}(0,1)\to\bbbs^k\subset\bbbr^{k+1}$, Lemma~\ref{T30} yields that
$\bar{B}^{k+1}(0,1/2)\subset g(\bar{B}^{k+1}(0,1))$ which contradicts the fact that the image of $g$ is contained in the unit sphere.

To prove \eqref{eq5}, let $|x|=1$. It follows from \eqref{eq2} and the triangle inequality that 
$$
\left|1-\frac{|F(rx)|}{r}\right|=
\left||x|-\Big|\frac{F(rx)}{r}\Big|\right|\leq
\left|\frac{F(rx)}{r}-x\right|<\frac{1}{4}.
$$
Therefore,
$$
|g(x)-x|\leq
\left|\frac{F(rx)}{|F(rx)|}-\frac{F(rx)}{r}\right|+
\left|\frac{F(rx)}{r}-x\right|<
\left|\frac{F(rx)}{|F(rx)|}\Big(1-\frac{|F(rx)|}{r}\Big)\right|+\frac{1}{4}< \frac{1}{2}.
$$
This proves \eqref{eq5} and completes the proof of the theorem.
\end{proof}
As a corollary of Theorem~\ref{T16}, Lemma~\ref{T11} and Corollary~\ref{T9} we obtain
\begin{theorem}
\label{T17}
If $f:\bbbr^n\supset\Omega\to T$, is a Lipschitz map from an open set into an $\bbbr$-tree $T$, then $\rank\md(f,x)\leq 1$ a.e. If in addition, $n\geq 2$, then $\H^n(f(\Omega))=0$.
\end{theorem}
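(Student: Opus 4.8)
The plan is to obtain both assertions as a direct consequence of the three results quoted, once the degenerate case is handled separately. First I would dispose of the case in which $T$ consists of a single point: then $f$ is constant, so $\md(f,x)$ is the zero seminorm for every $x$, hence $\rank\md(f,x)=0\leq 1$ everywhere, and $f(\Omega)$ is a single point, which has $\H^n$-measure zero for all $n\geq 1$. Thus from now on I may assume $T$ has at least two points, and Lemma~\ref{T11} applies to give $\dim T=1$.

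With $\dim T=1$ in hand, I would apply Theorem~\ref{T16} with $k=1$ to the Lipschitz map $f\colon\Omega\to T$, viewing $T$ itself as the target metric space of topological dimension $1$. This immediately yields $\rank\md(f,x)\leq 1$ for almost every $x\in\Omega$, which is the first claim.

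For the second claim, suppose $n\geq 2$. By the first part, $\rank\md(f,x)\leq 1<n$ for almost every $x$, so the set on which $\rank\md(f,x)=n$ is a null set; in particular it does not have positive measure. Corollary~\ref{T9} asserts that $\H^n(f(\Omega))>0$ if and only if $\rank\md(f,x)=n$ on a set of positive measure, so the failure of the latter forces $\H^n(f(\Omega))=0$, as desired.

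Given the machinery already developed, I do not expect any real obstacle here; the only step requiring a little care is the one-point case, where $\dim T=0$ and Lemma~\ref{T11} does not apply, but there the conclusion is transparent.
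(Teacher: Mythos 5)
Your argument is exactly the one the paper intends: Theorem~\ref{T17} is stated there as an immediate corollary of Lemma~\ref{T11}, Theorem~\ref{T16}, and Corollary~\ref{T9}, and you have spelled out precisely that chain. Your separate treatment of the one-point tree (where Lemma~\ref{T11} is inapplicable) is a small but legitimate refinement that the paper leaves implicit.
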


\section{Factoring Lipschitz maps}
\label{FLM}
Material of this section is based mostly on \cite{wengery}. Similar 
constructions appear also in \cite{lytchakw,petrunins}.
Theorem~\ref{T27} is new.

Given metric spaces $X,Y,Z$, we say that a Lipschitz map $\Phi:X\to Y$ {\em factors through} $Z$, if there are Lipschitz maps $\psi:X\to Z$ and $\phi:Z\to Y$ such that $\Phi=\phi\circ\psi$.

We say that a metric space $(X,d)$ is {\em $C_q$-quasiconvex}, where $C_q\geq 1$ is a constant, if for any $x,y\in X$ there is a rectifiable curve $\gamma:[0,1]\to X$ such that $\gamma(0)=x$, $\gamma(1)=y$ and $\ell(\gamma)\leq C_qd(x,y)$. A meric space is said to be {\em quasiconvex} if it is $C_q$-quasiconvex for some $C_q\geq 1$.

Let $X$ be a $C_q$-quasiconvex metric space, $Y$ another metric space, and let $\Phi:X\to Y$ be an $L$-Lipschitz map. We define a quasimetric on $X$ by
\begin{equation}
\label{FTEq1}
d_\Phi(x,y)=\inf\{ \ell(\Phi\circ \gamma):\, \gamma:[0,1]\to X,\ \gamma(0)=x,\ \gamma(1)=y\},
\end{equation}
where the infimum is over all rectifiable curves $\gamma$ connecting $x$ to $y$ in $X$. Note that, with a suitable reparameterization, we can assume that $\gamma$ is Lipschitz. 

It is easy to see that
\begin{equation}
\label{FTEq2}
d_Y(\Phi(x),\Phi(y)) \leq d_\Phi(x,y)\leq C_qLd_X(x,y).
\end{equation}
In particular,
\begin{equation}
\label{FTEq3}
d_\Phi(x,y)=0
\quad
\Rightarrow
\quad
\Phi(x)=\Phi(y).
\end{equation}
However, in general,
$$
\Phi(x)=\Phi(y)
\quad
\nRightarrow
\quad
d_\Phi(x,y)=0.
$$
Let $\sim$ be an equivalence relation in $X$ defined by
$$
x\sim y
\quad
\text{if and only if}
\quad
d_\Phi(x,y)=0,
$$
and let $Z_\Phi=X/\sim$. We equip $Z$ with the quotient distance
$$
d_\Phi([x],[y]):=d_\Phi(x,y).
$$
(One needs to check first that $d_\Phi$ is well defined in $Z_\Phi$ i.e., if $x\sim x'$ and $y\sim y'$, then $d_\Phi(x,y)=d_\Phi(x',y')$.)
The next result is an easy exercise left to the reader.
\begin{lemma}
\label{FTT1}
$(Z_\Phi,d_\Phi)$ is a metric space.
\end{lemma}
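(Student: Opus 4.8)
The claim to prove is Lemma~\ref{FTT1}: that $(Z_\Phi, d_\Phi)$ is a metric space. Here is how I would approach it.

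\medskip

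\textbf{Plan.} The statement bundles together several verifications, and I would carry them out in a definite order. \emph{First}, I would check that $d_\Phi$ is well defined on $X$ itself, i.e.\ that the infimum in \eqref{FTEq1} is taken over a nonempty set: this is exactly where $C_q$-quasiconvexity of $X$ enters, since any two points are joined by a rectifiable curve $\gamma$, and then $\Phi\circ\gamma$ is rectifiable because $\Phi$ is $L$-Lipschitz (so $\ell(\Phi\circ\gamma)\le L\,\ell(\gamma)\le C_q L\,d_X(x,y)<\infty$). \emph{Second}, I would establish the quasimetric axioms for $d_\Phi$ on $X$: symmetry is clear by reversing the parametrization of $\gamma$; $d_\Phi(x,x)=0$ by taking the constant curve; and the triangle inequality follows by concatenating a curve from $x$ to $y$ with one from $y$ to $z$ and noting that length is additive under concatenation, so $d_\Phi(x,z)\le d_\Phi(x,y)+d_\Phi(y,z)$. \emph{Third}, I would verify that $\sim$ is genuinely an equivalence relation: reflexivity and symmetry are immediate from the previous step, and transitivity is the triangle inequality ($d_\Phi(x,y)=d_\Phi(y,z)=0 \Rightarrow d_\Phi(x,z)=0$).

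\medskip

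\emph{Fourth}, I would check that $d_\Phi$ descends to a well-defined function on $Z_\Phi = X/\!\sim$: if $x\sim x'$ and $y\sim y'$, then by the triangle inequality on $X$,
\[
d_\Phi(x,y)\le d_\Phi(x,x')+d_\Phi(x',y')+d_\Phi(y',y)=d_\Phi(x',y'),
\]
and symmetrically $d_\Phi(x',y')\le d_\Phi(x,y)$, so the two are equal; hence $d_\Phi([x],[y]):=d_\Phi(x,y)$ is unambiguous. \emph{Fifth}, the metric axioms on $Z_\Phi$ now follow: symmetry and the triangle inequality are inherited verbatim from $X$; $d_\Phi([x],[x])=0$ is clear; and positivity, $d_\Phi([x],[y])=0 \Rightarrow [x]=[y]$, is precisely the definition of $\sim$ together with the fact (noted in the excerpt, via \eqref{FTEq3}) that $d_\Phi(x,y)=0$ forces $\Phi(x)=\Phi(y)$, so the quotient does not collapse distinct $\Phi$-values — though for the metric axiom all that is literally needed is that $d_\Phi(x,y)=0$ means $x\sim y$ by definition.

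\medskip

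\textbf{Main obstacle.} None of the individual steps is hard; the only point requiring a little care is the \emph{well-definedness of $d_\Phi$ on the quotient} (the fourth step), since one must use the triangle inequality \emph{on $X$ before passing to the quotient}, and one must be sure the three-fold application is legitimate — this is why the triangle inequality for the quasimetric on $X$ has to be established first. A secondary subtlety worth a sentence is that one should confirm $d_\Phi$ takes finite values everywhere on $X$ (again from quasiconvexity plus $L$-Lipschitz-ness), so that the triangle inequality manipulations never involve the indeterminate form $\infty-\infty$. Since the excerpt already records the bound \eqref{FTEq2}, $d_Y(\Phi(x),\Phi(y))\le d_\Phi(x,y)\le C_q L\, d_X(x,y)$, finiteness is in hand, and the proof reduces to the bookkeeping above; I would present it as a short sequence of one-line verifications.
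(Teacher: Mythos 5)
Your verification is correct and complete; the paper itself states Lemma~\ref{FTT1} as ``an easy exercise left to the reader'' and supplies no proof, so there is no written argument to diverge from, and the sequence of checks you lay out (finiteness via quasiconvexity and Lipschitz-ness, symmetry and triangle inequality by reversal and concatenation, $\sim$ an equivalence relation, well-definedness of $d_\Phi$ on the quotient via the triangle inequality on $X$, and positivity directly from the definition of $\sim$) is exactly the standard route the authors are implicitly expecting. The one point of care you single out, namely that well-definedness on $Z_\Phi$ must be checked \emph{after} the triangle inequality on $X$ is in hand, is indeed the only nontrivial bookkeeping step, and you handle it correctly.
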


Define now the mappings
\begin{align*}
&X  \stackrel{\psi}{\longrightarrow}  Z_\Phi  \stackrel{\phi}{\longrightarrow}  Y\\
& x\stackrel{\psi}{\longmapsto} [x] \stackrel{\phi}{\longmapsto} \Phi(x).
\end{align*}
Hence $\Phi=\phi\circ\psi$. Note that the mapping
$$
\phi:Z_\Phi\to Y,
\quad
\phi([x])=\Phi(x)
$$
is well defined, because \eqref{FTEq3} yields
$$
[x]=[x']
\quad
\equiv
\quad
x\sim x'
\quad
\equiv
\quad
d_\Phi(x,x')=0
\quad
\Rightarrow
\Phi(x)=\Phi(x').
$$
\begin{lemma}
\label{FTT2}
The mapping $\psi:X\to Z_\Phi$ is $C_qL$-Lipschitz and the mapping $\phi:Z_\Phi\to Y$ is $1$-Lipschitz. Therefore $\Phi:X\to Y$ factors through $Z_\Phi$, namely $\Phi=\phi\circ\psi$.
\end{lemma}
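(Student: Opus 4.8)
The plan is to verify the two Lipschitz bounds directly from the definitions, using the two inequalities in \eqref{FTEq2}. First I would show that $\psi$ is $C_qL$-Lipschitz. For $x,y\in X$ we have, by the definition of the quotient distance, $d_\Phi(\psi(x),\psi(y))=d_\Phi([x],[y])=d_\Phi(x,y)$, and the right-hand inequality in \eqref{FTEq2} gives $d_\Phi(x,y)\leq C_qL\, d_X(x,y)$. Combining these, $d_\Phi(\psi(x),\psi(y))\leq C_qL\, d_X(x,y)$, which is exactly the claim. The only point that needs a word of care here is that $\psi$ is genuinely a map into the metric space $(Z_\Phi,d_\Phi)$ — but that is guaranteed by Lemma~\ref{FTT1}, which we are allowed to assume.

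Next I would show that $\phi$ is $1$-Lipschitz. We already checked (just before the statement) that $\phi$ is well defined, i.e.\ that $[x]=[x']$ implies $\Phi(x)=\Phi(x')$, so the value $\phi([x])=\Phi(x)$ does not depend on the representative. Now for any two classes $[x],[y]\in Z_\Phi$, pick representatives $x,y$; the left-hand inequality in \eqref{FTEq2} gives $d_Y(\phi([x]),\phi([y]))=d_Y(\Phi(x),\Phi(y))\leq d_\Phi(x,y)=d_\Phi([x],[y])$. Hence $\phi$ is $1$-Lipschitz. Since this bound is independent of the chosen representatives, there is no ambiguity.

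Finally, the factorization $\Phi=\phi\circ\psi$ is immediate from the definitions: for every $x\in X$, $(\phi\circ\psi)(x)=\phi([x])=\Phi(x)$. This is exactly the diagram displayed before the statement of the lemma. Putting the three observations together yields the lemma. I do not expect any real obstacle here — the entire content is a two-line bookkeeping argument built on \eqref{FTEq2} and on the well-definedness of $\phi$ already established; the only thing to be slightly careful about is to phrase each estimate in terms of the quotient distance $d_\Phi$ on $Z_\Phi$ rather than the quasimetric $d_\Phi$ on $X$, and to note that everything is representative-independent.
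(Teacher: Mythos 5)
Your proof is correct and follows the paper's own argument exactly: both Lipschitz bounds come directly from the two inequalities in \eqref{FTEq2}, and the factorization is definitional. The extra remarks about well-definedness and representative-independence are fine but add nothing beyond what the paper already established before stating the lemma.
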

\begin{proof}
The mapping $\psi$ is $C_qL$-Lipschitz because according to \eqref{FTEq2}
$$
d_\Phi(\psi(x),\psi(y))=d_\Phi([x],[y])=d_\Phi(x,y)\leq C_qLd_X(x,y).
$$
On the other hand, $\phi$ is $1$-Lipschitz because according to \eqref{FTEq2}
$$
d_Y(\phi([x]),\phi([y]))=d_Y(\Phi(x),\Phi(y))\leq d_\Phi(x,y)=d_\Phi([x],[y]).
$$
\end{proof}	
\begin{corollary}
\label{FTT3}
For any curve $\alpha:[0,1]\to Z_\Phi$ we have $\ell(\phi\circ\alpha)\leq\ell(\alpha)$.
\end{corollary}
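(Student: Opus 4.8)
The plan is to deduce this directly from Lemma~\ref{FTT2}, which asserts that $\phi\colon Z_\Phi\to Y$ is $1$-Lipschitz, combined with the definition of the length of a curve as a supremum of partition sums. The underlying principle is the general fact that a $1$-Lipschitz map between metric spaces never increases the length of a curve, so the entire content of the corollary is carried by the previous lemma.

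Concretely, I would fix a curve $\alpha\colon[0,1]\to Z_\Phi$ and an arbitrary partition $0=t_0<t_1<\cdots<t_n=1$. Applying the $1$-Lipschitz bound for $\phi$ to each consecutive pair of points of the curve gives
$$
\sum_{i=0}^{n-1} d_Y\big(\phi(\alpha(t_i)),\phi(\alpha(t_{i+1}))\big)\;\leq\;\sum_{i=0}^{n-1} d_\Phi\big(\alpha(t_i),\alpha(t_{i+1})\big)\;\leq\;\ell(\alpha).
$$
The left-hand side is an arbitrary partition sum for the curve $\phi\circ\alpha\colon[0,1]\to Y$ (which is continuous since $\phi$ is Lipschitz), so taking the supremum over all partitions of $[0,1]$ yields $\ell(\phi\circ\alpha)\leq\ell(\alpha)$. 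If $\ell(\alpha)=\infty$ the inequality is vacuous, and if it is finite the estimate above applies verbatim.

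I do not expect any genuine obstacle here: the only subtlety worth recording is that the partition-sum definition of length makes monotonicity under $1$-Lipschitz maps essentially a triviality, so no reparametrization, rectifiability hypothesis, or continuity argument beyond continuity of $\phi$ is needed. The corollary is therefore immediate once Lemma~\ref{FTT2} is in hand.
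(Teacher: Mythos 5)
Your proof is correct and follows exactly the paper's approach: the paper dismisses the corollary in one line by citing Lemma~\ref{FTT2} and the general fact that $1$-Lipschitz maps do not increase curve length. You simply make that one-liner explicit by writing out the partition-sum argument.
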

Indeed, $\phi$ is $1$-Lipschitz and composing with a $1$-Lipschitz map cannot increase the length of a curve.
\begin{lemma}
\label{FTT4}
If $\gamma:[0,1]\to X$ is a rectifiable curve and $\alpha=\psi\circ\gamma:[0,1]\to Z_\Phi$, then
$\ell(\alpha)=\ell(\phi\circ\alpha)$.
\end{lemma}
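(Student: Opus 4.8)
The plan is to prove the two inequalities $\ell(\phi\circ\alpha)\le\ell(\alpha)$ and $\ell(\alpha)\le\ell(\phi\circ\alpha)$ separately. The first is exactly the content of Corollary~\ref{FTT3}, so the real task is the reverse inequality $\ell(\alpha)\le\ell(\phi\circ\alpha)$. Here it is convenient to note first that $\phi\circ\alpha=\phi\circ\psi\circ\gamma=\Phi\circ\gamma$, so what must be shown is $\ell(\psi\circ\gamma)\le\ell(\Phi\circ\gamma)$; also $\alpha=\psi\circ\gamma$ really is a curve, since $\psi$ is Lipschitz by Lemma~\ref{FTT2} and $\gamma$ is continuous, and similarly $\Phi\circ\gamma$ is a curve because $\Phi$ is $L$-Lipschitz.

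To bound $\ell(\alpha)$ from above I would return to the definition of length as a supremum over partitions. Fix a partition $0=t_0<t_1<\dots<t_N=1$ of $[0,1]$. For each $i$ the restriction $\gamma|_{[t_i,t_{i+1}]}$, reparametrized affinely onto $[0,1]$, is a rectifiable curve in $X$ joining $\gamma(t_i)$ to $\gamma(t_{i+1})$, hence an admissible competitor in the infimum \eqref{FTEq1} defining $d_\Phi$, so that
\[
d_\Phi(\alpha(t_i),\alpha(t_{i+1}))=d_\Phi(\gamma(t_i),\gamma(t_{i+1}))\le\ell\big(\Phi\circ\gamma|_{[t_i,t_{i+1}]}\big).
\]
Summing over $i$ and invoking additivity of length over subdivisions of the parameter interval, $\sum_i\ell(\Phi\circ\gamma|_{[t_i,t_{i+1}]})=\ell(\Phi\circ\gamma)$, yields $\sum_i d_\Phi(\alpha(t_i),\alpha(t_{i+1}))\le\ell(\Phi\circ\gamma)$. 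Taking the supremum over all partitions gives $\ell(\alpha)\le\ell(\Phi\circ\gamma)=\ell(\phi\circ\alpha)$, and together with Corollary~\ref{FTT3} this produces the claimed equality.

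I do not expect a serious obstacle; the only points needing a little care are that reparametrizing $\gamma|_{[t_i,t_{i+1}]}$ does not change the length of its $\Phi$-image, and that length is additive with respect to subdivisions of $[0,1]$ — both standard facts about lengths of curves recalled in Section~\ref{Prelim}. One should also keep in mind that, by construction of the quotient distance on $Z_\Phi$, the quantity $d_\Phi$ evaluated on equivalence classes equals $d_\Phi$ evaluated on representatives, which is what legitimizes the identity $\sum_i d_\Phi(\alpha(t_i),\alpha(t_{i+1}))=\sum_i d_\Phi(\gamma(t_i),\gamma(t_{i+1}))$.
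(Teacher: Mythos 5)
Your proposal is correct and follows essentially the same route as the paper: the inequality $\ell(\phi\circ\alpha)\leq\ell(\alpha)$ from Corollary~\ref{FTT3}, the observation $\phi\circ\alpha=\Phi\circ\gamma$, and then bounding each $d_\Phi(\gamma(t_i),\gamma(t_{i+1}))$ by $\ell\bigl((\Phi\circ\gamma)|_{[t_i,t_{i+1}]}\bigr)$ via the definition \eqref{FTEq1} before summing and taking the supremum. Nothing is missing, and the side remarks (reparametrization invariance, additivity, well-definedness of $d_\Phi$ on classes) are exactly the points the paper implicitly relies on.
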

In other words, $\phi$ preserves lengths of curves in $Z_\Phi$ that are images of rectifiable curves in $X$. 
\begin{proof}
Let $\gamma:[0,1]\to X$ and $\alpha=\psi\circ\gamma:[0,1]\to Z_\Phi$. In view of Corollary~\ref{FTT3} it suffices to show that 
\begin{equation}
\label{FTEq6}
\ell(\phi\circ\alpha)\geq\ell(\alpha).
\end{equation}
Note that
\begin{equation}
\label{FTEq7}
\phi\circ\alpha=\Phi\circ\gamma.
\end{equation}
Indeed, $\phi\circ\alpha=\phi\circ\psi\circ\gamma=\Phi\circ\gamma$.
Now taking the supremum over the partitions $0=t_0<t_1<\ldots<t_n=1$ we get
$$
\ell(\alpha)=
\sup\sum_{i=0}^{n-1}d_\Phi([\gamma(t_i)],[\gamma(t_{i+1})])=
\sup\sum_{i=0}^{n-1} d_\Phi(\gamma(t_i),\gamma(t_{i+1}))=: \heartsuit.
$$
Since $\gamma|_{[t_i,t_{i+1}]}$ is a rectifiable curve connecting $\gamma(t_i)$ to $\gamma(t_{i+1})$, the definition of $d_\Phi$ (see \eqref{FTEq1}) yields that 
$$
d_\Phi(\gamma(t_i),\gamma(t_{i+1}))\leq \ell\left((\Phi\circ\gamma)|_{[t_i,t_{i+1}]}\right)
$$
and hence 
$$
\heartsuit\leq \sup\sum_{i=0}^{n-1} \ell\left((\Phi\circ\gamma)|_{[t_i,t_{i+1}]}\right)=\ell(\Phi\circ\gamma)\stackrel{\eqref{FTEq7}}{=}\ell(\phi\circ\alpha),
$$
This completes the proof of \eqref{FTEq6} and hence that of the lemma.
\end{proof}	

\begin{corollary}
\label{FTT5}
$(Z_\Phi,d_\Phi)$ is a length space. If in addition, $X$ is compact, then $Z_\Phi$ is a geodesic space.
\end{corollary}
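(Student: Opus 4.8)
The plan is to read off the length-space property directly from the definition of $d_\Phi$ together with Lemma~\ref{FTT4}, and then to obtain the geodesic statement from Corollary~\ref{T2}. Recall that $Z_\Phi$ is a genuine metric space by Lemma~\ref{FTT1}, and that in any metric space the distance between two points is a lower bound for the infimum of lengths of curves joining them (apply the trivial partition). Hence, to show $(Z_\Phi,d_\Phi)$ is a length space, it suffices to produce, for given $[x],[y]\in Z_\Phi$ and $\eps>0$, a curve in $Z_\Phi$ from $[x]$ to $[y]$ of length at most $d_\Phi([x],[y])+\eps$.

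First I would fix $[x],[y]\in Z_\Phi$ and $\eps>0$. Since $X$ is $C_q$-quasiconvex it is rectifiably connected, so by the definition \eqref{FTEq1} of $d_\Phi$ there is a rectifiable curve $\gamma\colon[0,1]\to X$ with $\gamma(0)=x$, $\gamma(1)=y$ and $\ell(\Phi\circ\gamma)\le d_\Phi(x,y)+\eps$; after a reparameterization we may take $\gamma$ to be Lipschitz. Set $\alpha=\psi\circ\gamma\colon[0,1]\to Z_\Phi$. Then $\alpha$ is continuous (as $\psi$ is $C_qL$-Lipschitz by Lemma~\ref{FTT2}) and joins $[x]$ to $[y]$, and $\phi\circ\alpha=\Phi\circ\gamma$ by \eqref{FTEq7}, so Lemma~\ref{FTT4} gives
$$
\ell(\alpha)=\ell(\phi\circ\alpha)=\ell(\Phi\circ\gamma)\le d_\Phi(x,y)+\eps=d_\Phi([x],[y])+\eps .
$$
Letting $\eps\to 0$ shows that the infimum of lengths of curves in $Z_\Phi$ joining $[x]$ to $[y]$ equals $d_\Phi([x],[y])$, i.e.\ $(Z_\Phi,d_\Phi)$ is a length space. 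For the second assertion, if $X$ is compact then $Z_\Phi=\psi(X)$ is compact, being a continuous image of a compact set, so $Z_\Phi$ is a compact length space and therefore geodesic by Corollary~\ref{T2}.

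I do not expect a real obstacle: the entire content has been isolated in Lemma~\ref{FTT4}, whose purpose is precisely to convert lengths of curves in the quotient $Z_\Phi$ back into lengths of $\Phi$-images of curves in $X$, which is exactly what $d_\Phi$ measures. The only points deserving a word of care are that quasiconvexity makes $X$ rectifiably connected (so that $d_\Phi$ is finite and the competitor curves $\gamma$ exist) and that $\psi\circ\gamma$ is a legitimate continuous (indeed rectifiable) curve in $Z_\Phi$; both are immediate from the hypotheses and from Lemma~\ref{FTT2}.
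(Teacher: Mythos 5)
Your proposal is correct and follows the same route as the paper: produce curves $\alpha=\psi\circ\gamma$ in $Z_\Phi$, use Lemma~\ref{FTT4} together with $\phi\circ\alpha=\Phi\circ\gamma$ to transfer lengths, and conclude from the definition of $d_\Phi$; then invoke compactness of $Z_\Phi$ and Corollary~\ref{T2} for the geodesic statement. The only cosmetic difference is that you phrase the argument with an explicit $\eps$-competitor rather than taking the infimum in one line.
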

\begin{proof}
If $[x],[y]\in Z_\Phi$ and $\gamma:[0,1]\to X$, $\gamma(0)=x$, $\gamma(1)=y$, is a rectifiable curve, then $\alpha=\psi\circ\gamma:[0,1]\to Z_\Phi$, $\alpha(0)=[x]$, $\alpha(1)=[y]$ and according to Lemma~\ref{FTT4} and \eqref{FTEq7},
$$
\ell(\alpha)=\ell(\phi\circ\alpha)=\ell(\Phi\circ\gamma).
$$	
Therefore, the definition of $d_\Phi$ yields
\begin{equation}
\label{eq20}
d_\Phi([x],[y])=d_\Phi(x,y)=\inf_\gamma\ell(\Phi\circ\gamma)=\inf_\alpha \ell(\alpha).
\end{equation}
We proved that $d_\Phi([x],[y])$ equals the infimum of length of curves $\alpha$ in $Z_\Phi$ that have a special form $\alpha=\psi\circ\gamma$. But the infimum over all curves in $Z_\Phi$ that connect $[x]$ to $[y]$ cannot be smaller than
$d_\Phi([x],[y])$ so $d_\Phi([x],[y])$ is equal to the infimum over all curves in $Z_\Phi$ that connect $[x]$ to $[y]$.

Now suppose that additionally $X$ is compact. Then $Z_\Phi$ is also compact as a $C_qL$-Lipschitz image of $X$ and hence it is geodesic by Corollary~\ref{T2}.
\end{proof}	

In \eqref{eq20} we proved that the distance in $Z_\Phi$ is obtained as the infimum of lengths over a subclass of curves $\alpha=\psi\circ\gamma$ connecting the given two points. While, in general, not every rectifiable curve in $Z_\Phi$ is of that form (see Example~\ref{E1}), all rectifiable curves in $Z_\Phi$ can be well approximated by such curves.
\begin{lemma}
\label{T34}
Let $\alpha:[0,1]\to Z_\Phi$ be a Lipschitz curve, and let $x,y\in X$ be such that $\psi(x)=\alpha(0)$, $\psi(y)=\alpha(1)$. Then there is a sequence of Lipschitz curves
$\gamma_n:[0,1]\to X$, such that 
\begin{itemize}
\item[(a)] $\gamma_n(0)=x$, $\gamma_n(1)=y$,
\item[(b)] $\alpha_n:=\psi\circ\gamma_n$ converges uniformmly to $\alpha$,
\item[(c)] $\ell(\alpha_n)\to\ell(\alpha)$.
\item[(d)] If $\alpha$ is closed, we may choose $\gamma_n$ to be closed by taking $x=y$.
\end{itemize}
\end{lemma}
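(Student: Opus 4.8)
The plan is to produce $\gamma_n$ by concatenating short curves in $X$, one over each piece of a uniform partition, and then to push all estimates down to $Z_\Phi$ using the properties of $d_\Phi$, $\psi$, $\phi$ already established. Fix $n\ge 1$ and put $t_i=i/n$, $i=0,\dots,n$. Since $\psi$ is onto, I would choose $x_i\in X$ with $\psi(x_i)=\alpha(t_i)$, taking $x_0=x$ and $x_n=y$ (and $x_0=x_n=x$ in the closed case, where $\alpha(0)=\alpha(1)$ forces $\psi(x)=\psi(y)$, making this choice legitimate). By definition of the quotient distance, $d_\Phi([x_{i-1}],[x_i])=d_\Phi(\alpha(t_{i-1}),\alpha(t_i))<\infty$, and by \eqref{FTEq1} this number is the infimum of $\ell(\Phi\circ\gamma)$ over rectifiable curves $\gamma$ in $X$ from $x_{i-1}$ to $x_i$ (the class of such curves is nonempty by quasiconvexity of $X$). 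Hence there is a rectifiable $\gamma^{(i)}$ joining $x_{i-1}$ to $x_i$ with $\ell(\Phi\circ\gamma^{(i)})\le d_\Phi(\alpha(t_{i-1}),\alpha(t_i))+n^{-2}$. Reparametrizing each $\gamma^{(i)}$ (every rectifiable curve admits a Lipschitz reparametrization) onto $[t_{i-1},t_i]$, the concatenation $\gamma_n:[0,1]\to X$ is Lipschitz and joins $x$ to $y$, which gives (a), and, when $x=y$, also (d).

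Next I analyze $\alpha_n=\psi\circ\gamma_n$. By construction $\alpha_n(t_i)=\psi(x_i)=\alpha(t_i)$ and $\alpha_n|_{[t_{i-1},t_i]}=\psi\circ\gamma^{(i)}$, so Lemma~\ref{FTT4} (together with $\phi\circ\psi=\Phi$) yields $\ell(\alpha_n|_{[t_{i-1},t_i]})=\ell(\Phi\circ\gamma^{(i)})\le d_\Phi(\alpha(t_{i-1}),\alpha(t_i))+n^{-2}$. Summing over $i$, using additivity of length, and using that any partition gives a lower bound for length (applied to the partition $\{t_i\}$ of $\alpha_n$, whose nodes are exactly the $\alpha(t_i)$), I get
$$
\sum_{i=1}^{n} d_\Phi\big(\alpha(t_{i-1}),\alpha(t_i)\big)\ \le\ \ell(\alpha_n)\ \le\ \sum_{i=1}^{n} d_\Phi\big(\alpha(t_{i-1}),\alpha(t_i)\big)+\tfrac1n .
$$
Since $\alpha$ is Lipschitz, hence rectifiable, the partition sums on both sides converge to $\ell(\alpha)$ as $n\to\infty$ — this is the standard fact that, for a rectifiable curve, the sum of distances between consecutive partition points tends to the length as the mesh tends to $0$. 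Squeezing gives (c).

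For (b), let $M$ be a Lipschitz constant for $\alpha$ and take $t\in[t_{i-1},t_i]$. Using $\alpha_n(t_{i-1})=\alpha(t_{i-1})$, that the length of a subcurve dominates the distance between its endpoints, and $d_\Phi(\alpha(t_{i-1}),\alpha(t_i))\le\ell\big(\alpha|_{[t_{i-1},t_i]}\big)\le M/n$, I obtain
$$
d_\Phi\big(\alpha_n(t),\alpha(t)\big)\le \ell\big(\alpha_n|_{[t_{i-1},t]}\big)+\ell\big(\alpha|_{[t_{i-1},t]}\big)\le \Big(\tfrac Mn+\tfrac1{n^2}\Big)+\tfrac Mn\le \tfrac{2M}{n}+\tfrac1{n^2},
$$
a bound independent of $t$, so $\alpha_n\to\alpha$ uniformly, proving (b). The only step requiring genuine care is the convergence of the partition sums to $\ell(\alpha)$ invoked in (c) (a routine but not entirely trivial property of the length functional on rectifiable curves); everything else is bookkeeping with the already-established properties of $\psi$, $\phi$, $d_\Phi$, and Lemma~\ref{FTT4}.
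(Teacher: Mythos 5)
Your proof is correct and takes essentially the same route as the paper: partition $[0,1]$, lift the nodes $\alpha(t_i)$ to points in $X$, join consecutive lifts by curves whose $\Phi$-images are nearly length-minimizing, concatenate, and then estimate lengths and uniform distance using Lemma~\ref{FTT4} and the quotient distance. The only cosmetic difference is that you use uniform partitions and invoke the standard fact that inscribed-polygon sums of a rectifiable curve converge to its length as the mesh tends to zero, whereas the paper instead chooses a partition that is simultaneously fine and within $1/n$ of realizing the supremum defining $\ell(\alpha)$, which avoids appealing to that fact explicitly.
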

\begin{example}
\label{E1}
We shall construct an example in which we necessarily have $\ell(\gamma_n)\to\infty$. In such a case, it is not possible to construct a Lipschitz curve $\gamma:[0,1]\to X$ satisfying $\alpha=\psi\circ\gamma$.

Let $X=[0,3]$, $Y=[0,2]$, and let $\Phi:X\to Y$ be defined by
$$
\Phi(x)=
\begin{cases}
x     & \text{if  $0\leq x\leq 1$} \\
1     & \text{if  $1\leq x\leq 2$} \\
x-1   & \text{if  $2\leq x\leq 3$.}
\end{cases}
$$
Then $Z_\Phi=Y$, $\psi=\Phi$, and $\phi=\id$. If $\alpha:[0,1]\to [0,2]=Z_\Phi$ is a Lipschitz curve such that for some $s<t$, $\alpha(s)<1$, $\alpha(t)>1$, then for all sufficiently large $n$, 
$\gamma_n(s)<1$, $\gamma_n(t)>2$, and hence $\ell(\gamma_n|_{[s,t]})>1$. Therefore, if $\alpha$ is a highly oscillating curve that crosses the point $1\in [0,2]=Z_\Phi$ infinitely many times, we necessarily have $\ell(\gamma_n)\to\infty$.
\end{example}
\begin{proof}[Proof of Lemma~\ref{T34}]
Choose a sequence of partitions
$
0=t_{n,0}<t_{n,1}<\ldots<t_{n,k_n}=1,
$
such that
\begin{equation}
\label{eq21}
\ell(\alpha)-\frac{1}{n}\leq 
\sum_{i=0}^{k_n-1}d_\Phi(\alpha(t_{n,i}),\alpha(t_{n,i+1}))\leq\ell(\alpha),
\quad
\Delta_n=\max_i|t_{n,i+1}-t_{n,i}|\stackrel{n\to\infty}{\longrightarrow} 0.
\end{equation}
Fix 
$x_{n,0}=x$, $x_{n,t_k}=y$, and
$x_{n,i}\in X$ satisfying $[x_{n,i}]=\psi(x_{n,i})=\alpha(t_{n,i})$. Since
$$
d_\Phi(\alpha(t_{n,i}),\alpha(t_{n,i+1}))=d_\Phi(x_{n,i},x_{n,i+1}),
$$
the definition of $d_\Phi$ yields the existence of a curve
$$
\gamma_{n,i}:[t_{n,i},t_{n,i+1}]\to X,
\quad
\gamma_{n,i}(t_{n,i})=x_{n,i},
\quad
\gamma_{n,i}(t_{n,i+1})=x_{n,i+1}
$$
such that
\begin{equation}
\label{eq22}
d_\Phi(\alpha(t_{n,i}),\alpha(t_{n,i+1}))\leq
\ell(\Phi\circ\gamma_{n,i}) \leq
d_\Phi(\alpha(t_{n,i}),\alpha(t_{n,i+1}))+\frac{1}{n k_n}\, .
\end{equation}
According to Lemma~\ref{FTT4},
\begin{equation}
\label{eq23}
\ell(\Phi\circ\gamma_{n,i})=\ell(\phi\circ(\psi\circ\gamma_{n,i}))=\ell(\psi\circ\gamma_{n,i}).
\end{equation}
Therefore, \eqref{eq21}, \eqref{eq22}, and \eqref{eq23} yield
\begin{equation}
\label{eq24}
\ell(\alpha)-\frac{1}{n}\leq
\sum_{i=0}^{k_n-1}\ell(\psi\circ\gamma_{n,i})\leq
\ell(\alpha)+\frac{1}{n}.
\end{equation}
For each $n$, define a Lipschitz curve $\gamma_n:[0,1]\to X$ as the concatenation of the curves $\{\gamma_{n,i}\}_{i=0}^{k_n-1}$, and let $\alpha_n=\psi\circ\gamma_n$. 
Clearly, $\gamma_n(0)=x$ and $\gamma_n(1)=y$, which is (a).
If $\alpha$ is closed, we may take $x=y$ and in that case, the curves $\gamma_n$ are closed, which proves (d).

Now, \eqref{eq24} implies that
$\ell(\alpha)-1/n\leq\ell(\alpha_n)\leq\ell(\alpha)+1/n$ which proves that $\ell(\alpha_n)\to\ell(\alpha)$ which is (c).

It remains to prove that $\alpha_n\to\alpha$ uniformly.
Note that
\begin{equation}
\label{eq25}
\alpha_n(t_{n,i})=(\psi\circ\gamma_{n,i})(t_{n,i})=\psi(x_{n,i})=\alpha(t_{n,i}).
\end{equation}

Assume that $\alpha$ is $M$-Lipschitz. Note that
$\alpha_n|_{[t_{n,i},t_{n,i+1}]}=\psi\circ\gamma_{n,i}$,
so \eqref{eq23} and \eqref{eq22} yield
$$
\ell(\alpha_n|_{[t_{n,i},t_{n,i+1}]})\leq d_\Phi(\alpha(t_{n,i}),\alpha(t_{n,i+1}))+\frac{1}{nk_n}\leq 
M|t_{n,i+1}-t_{n,i}|+\frac{1}{nk_n}\, .
$$
Since by \eqref{eq25}, curves $\alpha_n$ and $\alpha$ coincide at the endpoints of the interval $[t_{n,i},t_{n,i+1}]$, for
$t_{n,i}\leq t\leq t_{n,i+1}$ we have
\begin{equation*}
\begin{split}
&d_\Phi(\alpha_n(t),\alpha(t))
\leq
\ell(\alpha|_{[t_{n,i},t_{n,i+1}]}) +
\ell(\alpha_n|_{[t_{n,i},t_{n,i+1}]})\\
& \leq 
M|t_{n,i+1}-t_{n,i}| +\Big(M|t_{n,i+1}-t_{n,i}|+\frac{1}{nk_n}\Big)\leq
2M\Delta_n+\frac{1}{nk_n}\stackrel{n\to\infty}{\longrightarrow} 0.
\end{split}
\end{equation*}
This proves uniform convergence $\alpha_n\to\alpha$ and completes the proof.
\end{proof}

The next result shows that for any $n$, the mapping $\Phi:Z_\Phi\to Y$ preserves the $\H^n$ measure of certain subsets of $Z_\Phi$. 
\begin{theorem}
\label{T27}
Let $\Phi:X\to Y$ be a Lipschitz map between a quasiconvex metric space $X$, and another metric space $Y$. Let $\psi:X\to Z_\Phi$ and $\phi:Z_\Phi\to Y$  be as above.

If $f:\Omega\to X$ is a Lipschitz map defined on an open set $\Omega\subset\bbbr^n$ for some $n$, and $\tilde{X}=f(\Omega)$, then the following holds:
\begin{enumerate}
\item[(a)] $\md(\psi\circ f,x)=\md(\Phi\circ f,x)$ for almost all $x\in\Omega$.
\item[(b)] For any Borel function $g:\psi(\tilde{X})\to [0,\infty]$,
$$
\int_{\psi(\tilde{X})} g(x)\, d\H^n(x)=
\int_{\Phi(\tilde{X})}\Big(\sum_{x\in\phi^{-1}(y)\cap\psi(\tilde{X})} g(x)\Big)\, d\H^n(y).
$$
\end{enumerate}
\end{theorem}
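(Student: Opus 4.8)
The plan is to deduce Theorem~\ref{T27} directly from Theorem~\ref{T18}, applied not to $\Phi$ itself but to its $1$-Lipschitz factor $\phi\colon Z_\Phi\to Y$, with $f$ replaced by the composition $\psi\circ f$. Concretely, in the notation of Theorem~\ref{T18} I would let the two metric spaces be $Z_\Phi$ (in the role of $X$) and $Y$, let the map between them be $\phi$ (in the role of $\Phi$), and let the Lipschitz map from $\Omega$ be $\psi\circ f$ (in the role of $f$). Here $\psi\circ f$ is indeed Lipschitz, since $f$ is Lipschitz and $\psi$ is $C_qL$-Lipschitz by Lemma~\ref{FTT2}. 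Note also that $(\psi\circ f)(\Omega)=\psi(\tilde X)$ and $\phi(\psi(\tilde X))=\Phi(\tilde X)$, and $\phi\circ(\psi\circ f)=\Phi\circ f$, so conclusions (a) and (c) of Theorem~\ref{T18} for this data will read exactly as (a) and (b) of Theorem~\ref{T27}.

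The one thing that needs checking is the hypothesis~\eqref{eq9} of Theorem~\ref{T18} for the data $(\phi,\psi\circ f)$, and this is the step that does the real work; it is supplied by Lemma~\ref{FTT4}. For $x\in\Omega$ and $v\in\bbbs^{n-1}$ set $\gamma_{x,v}(t)=f(x+tv)$ and $\tilde\gamma_{x,v}(t)=(\psi\circ f)(x+tv)=\psi(\gamma_{x,v}(t))$, both defined for $t\in[0,d(x)]$ with $d(x)=\min\{\frac{1}{2}\dist(x,\partial\Omega),1\}$. Since $f$ is Lipschitz, each $\gamma_{x,v}$ is Lipschitz, hence so is every restriction $\gamma_{x,v}|_{[0,t]}$, which is therefore a rectifiable curve in $X$. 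Reparametrizing its domain to $[0,1]$ (which changes neither its image nor any length) and applying Lemma~\ref{FTT4} with $\alpha=\psi\circ(\gamma_{x,v}|_{[0,t]})$ gives $\ell(\psi\circ\gamma_{x,v}|_{[0,t]})=\ell(\phi\circ\psi\circ\gamma_{x,v}|_{[0,t]})$, that is,
$$
\ell_{Z_\Phi}\big(\tilde\gamma_{x,v}|_{[0,t]}\big)=\ell_Y\big(\phi\circ\tilde\gamma_{x,v}|_{[0,t]}\big)
\qquad\text{for all } x\in\Omega,\ v\in\bbbs^{n-1},\ t\in[0,d(x)],
$$
which is precisely~\eqref{eq9} for $(\phi,\psi\circ f)$.

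With~\eqref{eq9} in hand, Theorem~\ref{T18}(a) yields $\md(\phi\circ(\psi\circ f),x)=\md(\psi\circ f,x)$ for almost every $x\in\Omega$; since $\phi\circ\psi=\Phi$, the left-hand side equals $\md(\Phi\circ f,x)$, which is statement (a). Likewise Theorem~\ref{T18}(c), with $\tilde X$ there taken to be $(\psi\circ f)(\Omega)=\psi(\tilde X)$ and using $\phi(\psi(\tilde X))=\Phi(\tilde X)$, gives, for every Borel function $g\colon\psi(\tilde X)\to[0,\infty]$,
$$
\int_{\psi(\tilde X)} g(x)\,d\H^n(x)=\int_{\Phi(\tilde X)}\Big(\sum_{x\in\phi^{-1}(y)\cap\psi(\tilde X)} g(x)\Big)\,d\H^n(y),
$$
which is statement (b). I do not expect a genuine obstacle: the argument is essentially a change of viewpoint that reduces everything to the already-proved Theorem~\ref{T18}. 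The only place where a little care is required is confirming that Lemma~\ref{FTT4} applies to the restricted curves $\gamma_{x,v}|_{[0,t]}$ for \emph{every} $t\in[0,d(x)]$ — handled by the trivial reparametrization to $[0,1]$ — and that all hypotheses of Theorem~\ref{T18} are met ($X$ quasiconvex so that $Z_\Phi$ and $\psi,\phi$ are defined, and $\psi\circ f$ Lipschitz).
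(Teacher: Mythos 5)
Your proposal is correct and follows exactly the route of the paper's own proof: apply Theorem~\ref{T18} with $Z_\Phi$ in the role of $X$, $\phi$ in the role of $\Phi$, and $\psi\circ f$ in the role of $f$, with the hypothesis~\eqref{eq9} supplied by Lemma~\ref{FTT4}. You simply spell out the routine verifications (reparametrizing the restrictions $\gamma_{x,v}|_{[0,t]}$, identifying $(\psi\circ f)(\Omega)=\psi(\tilde X)$ and $\phi(\psi(\tilde X))=\Phi(\tilde X)$) that the paper leaves implicit.
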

\begin{proof}
The result follows immediately from Theorem~\ref{T18}, because according to Lemma~\ref{FTT4}, the mapping $\phi:Z_\Phi\to Y$ preserves length of curves
$\gamma_{x,v}(t)=(\psi\circ f)(x+tv)$.
\end{proof}

\section{Proof of Theorem~\ref{TM7}}
\label{PTM1}

This section is devoted to the proof of Theorem~\ref{TM7}.
Since the proof will rely on the characterization of $\bbbr$-trees as in part (g) of Lemma~\ref{T33}, before we proceed to the proof of the theorem, we need a few simple lammata about the integral expression there, but first we will start with an informal heuristic discussion.

If $D\subset\bbbr^2$ is a smooth bounded simply connected domain, and $\gamma:\bbbs^1\to\partial D$ is an orientation preserving parametrization of the boundary, then the area of $D$ can be expressed as
$$
A(D)=
\int_D dx\wedge dy=
\int_{\partial D}x\, dy=
\int_{\bbbs^1}\gamma^*(x\, dy).
$$
If $\gamma:\bbbs^1\to\bbbr^2$ is any Lipschitz curve, then
$$
A(\gamma):=\int_{\bbbs^1}\gamma^*(x\, dy)
$$
represents the {\em oriented area enclosed by $\gamma$}---the sum (possibly infinite) of areas of bounded connected components of $\bbbr^2\setminus\gamma(\bbbs^1)$, multipled by the corresponding winding numbers.

Thus, roughly speaking, condition (g) in Lemma~\ref{T33} says that there are no closed curves in $X$ with non-trivial ``holes'', as otherwise we could project such a curve to $\bbbr^2$, by composing it with a suitably constructed Lipschitz map $\pi:X\to\bbbr^2$, so that the resulting curve $\pi\circ\gamma$ would bound a non-zero oriented area
$$
A(\pi\circ\gamma)=\int_{\bbbs^1}(\pi\circ\gamma)^*(x\, dy)\neq 0.
$$
This interpretation is consistent with our intuition that $\bbbr$-trees are geodesic spaces without non-trivial loops.

In fact, we will not use the above geometric interpretation of $A(\gamma)$, but it is important to keep it in mind to provide intuition for the estimates that we do next.

\begin{lemma}
\label{tt1}
Let $\gamma \colon \bbbs^1 \to \bbbr^2$ be a Lipschitz curve and let $\Gamma=(\Gamma_1,\Gamma_2)\colon \bar{\bbbb}^2 \to \bbbr^2$ be any Lipschitz extension of $\gamma$ to the closed unit disk. Then
\begin{equation}
    \label{saz2}
 A(\gamma) = \int_{\bbbb^2} d\Gamma_1\wedge d\Gamma_2=\int_{\bbbb^2}\det D\Gamma.
\end{equation}
\end{lemma}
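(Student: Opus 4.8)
The plan to prove Lemma~\ref{tt1} is to deduce \eqref{saz2} from the classical Green's theorem by smoothing $\Gamma$. The second equality in \eqref{saz2} needs no argument: at every point of differentiability of $\Gamma$ one has the algebraic identity $d\Gamma_1\wedge d\Gamma_2=\det D\Gamma\,dx_1\wedge dx_2$, so it holds a.e.\ on $\bbbb^2$ by Rademacher's theorem, and since $\Gamma$ is Lipschitz, $\det D\Gamma\in L^\infty(\bbbb^2)\subset L^1(\bbbb^2)$. So it remains to prove $A(\gamma)=\int_{\bbbb^2}d\Gamma_1\wedge d\Gamma_2$.

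First I would extend $\Gamma$ componentwise (via the McShane extension) to a Lipschitz map $\bar\Gamma=(\bar\Gamma_1,\bar\Gamma_2)\colon\bbbr^2\to\bbbr^2$ agreeing with $\Gamma$ on $\bar{\bbbb}^2$, and then mollify: with a standard mollifier $\rho_\eps$ supported in $B(0,\eps)$, put $\Gamma_\eps=\bar\Gamma*\rho_\eps$. Each $\Gamma_\eps$ is smooth on $\bbbr^2$, the $\Gamma_\eps$ are uniformly Lipschitz, $\|\Gamma_\eps-\bar\Gamma\|_\infty\to0$, and $D\Gamma_\eps=(D\bar\Gamma)*\rho_\eps\to D\bar\Gamma$ a.e.\ with $\|D\Gamma_\eps\|_\infty$ bounded. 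Applying the classical Green's theorem on $\bbbb^2$ to the smooth $1$-form $\Gamma_{1,\eps}\,d\Gamma_{2,\eps}$ gives, for every $\eps>0$,
$$
\int_{\bbbs^1}\Gamma_{1,\eps}\,d\Gamma_{2,\eps}=\int_{\bbbb^2}d\Gamma_{1,\eps}\wedge d\Gamma_{2,\eps}=\int_{\bbbb^2}\det D\Gamma_\eps .
$$

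Now I would let $\eps\to0$. On the right, $\det D\Gamma_\eps\to\det D\bar\Gamma=\det D\Gamma$ a.e.\ on $\bbbb^2$ and the integrands are uniformly bounded, so by dominated convergence the right-hand side tends to $\int_{\bbbb^2}\det D\Gamma$. For the left-hand side, parametrize $\bbbs^1$ by $\theta\mapsto e^{i\theta}$ and write it as $\int_0^{2\pi}\Gamma_{1,\eps}(e^{i\theta})\,\frac{d}{d\theta}\big[\Gamma_{2,\eps}(e^{i\theta})\big]\,d\theta$. The functions $\theta\mapsto\Gamma_{j,\eps}(e^{i\theta})$ are uniformly Lipschitz in $\eps$ and converge uniformly to $\gamma_j=\bar\Gamma_j|_{\bbbs^1}$; hence $\Gamma_{1,\eps}(e^{i\cdot})\to\gamma_1$ uniformly, so in $L^1(0,2\pi)$, while $\frac{d}{d\theta}\Gamma_{2,\eps}(e^{i\cdot})$ is bounded in $L^\infty(0,2\pi)$ and converges weak-$*$ to $\gamma_2'$ (a uniformly bounded sequence of derivatives of uniformly convergent Lipschitz functions must converge weak-$*$ to the derivative of the limit, as one sees by testing against $C_c^\infty(0,2\pi)$). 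Since a product of an $L^1$-convergent sequence with a weak-$*$ convergent $L^\infty$-bounded sequence converges under the integral, the left-hand side tends to $\int_0^{2\pi}\gamma_1(\theta)\gamma_2'(\theta)\,d\theta=\int_{\bbbs^1}\gamma^*(x\,dy)=A(\gamma)$. Combining the two limits yields \eqref{saz2}. The one delicate point is precisely this convergence of the boundary integral: one cannot differentiate the mollified map along $\bbbs^1$ and pass to the limit termwise, because restricting $\Gamma_\eps$ to $\bbbs^1$ and differentiating in $\theta$ does not coincide with mollifying $\gamma$ on the circle, so the argument must run through weak-$*$ compactness of the uniformly bounded tangential derivatives together with uniform convergence of the zeroth-order factor. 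Everything else is routine.
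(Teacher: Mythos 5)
Your proof is correct and follows exactly the route the paper indicates: Stokes' theorem for the mollified map plus a limiting argument, which is the "smooth approximation" strategy the paper sketches and outsources to \cite[Lemma~4.9]{DHLT}. You have correctly identified and handled the only delicate point, namely that the tangential derivative of the mollified restriction to $\bbbs^1$ is not a mollification of $\gamma'$ and so must be passed to the limit via uniform $L^\infty$ bounds and weak-$*$ convergence, paired with the uniform (hence $L^1$) convergence of the zeroth-order factor.
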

If $\Gamma$ is smooth up to the boundary, it follows from Stokes' theorem, and in the Lipschitz case one can prove it by using a smooth approximation.
For a detailed proof of a more general result, see for example, \cite[Lemma~4.9]{DHLT}.

If $\gamma:\bbbs^1\to\bbbr^2$ is a Lipschitz curve and $\bar{\gamma}=(\bar{\gamma}_1,\bar{\gamma}_2):[0,1]\to\bbbr^2$, $\bar{\gamma}(t)=\gamma(\exp(2\pi it))$, then $\bar{\gamma}(0)=\bar{\gamma}(1)$, and
$$
A(\gamma)=\int_{\bbbs^1}\gamma^*(x\,dy)=
\int_0^1\bar{\gamma}_1(t)\bar{\gamma}_2'(t)\, dt,
$$
so with a slight abuse of notation we can identify $\gamma$ with $\bar{\gamma}$ and, whenever it is convenient, regard $\gamma$ as a curve $\gamma:[0,1]\to\bbbr^2$ satisfying $\gamma(0)=\gamma(1)$.


\begin{lemma}
\label{ma1}
Suppose that $\gamma_n \colon \bbbs^1 \to \bbbr^2$, $n=1,2,\ldots$ are Lipschitz curves that uniformly converge to a Lipschitz curve $\gamma \colon \bbbs^1 \to \bbbr^2$. Assume also that there exists  $M>0$ such that $\ell(\gamma_n) \leq M$ for all $n$. Then,
$$
\lim_{n\to \infty} A(\gamma_n) = A(\gamma) \, .
$$
\end{lemma}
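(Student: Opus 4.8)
The plan is to reduce the statement to a one–dimensional integration by parts. As in the discussion preceding the lemma, after composing with $t\mapsto\exp(2\pi i t)$ we may regard $\gamma$ and each $\gamma_n$ as Lipschitz curves $[0,1]\to\bbbr^2$ with $\gamma(0)=\gamma(1)$ and $\gamma_n(0)=\gamma_n(1)$; uniform convergence on $\bbbs^1$ becomes uniform convergence on $[0,1]$, and $A(\gamma)=\int_0^1\gamma_1\gamma_2'\,dt$, $A(\gamma_n)=\int_0^1\gamma_{n,1}\gamma_{n,2}'\,dt$ (these integrals make sense because the curves are Lipschitz). Since reparametrization does not change length and $|\gamma_{n,2}'|\le|\dot\gamma_n|$ a.e., Lemma~\ref{T3} gives $\int_0^1|\gamma_{n,2}'|\,dt\le\ell(\gamma_n)\le M$ for every $n$, and similarly $\int_0^1|\gamma_1'|\,dt\le\ell(\gamma)<\infty$.

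First I would split
\[
A(\gamma_n)-A(\gamma)=\int_0^1(\gamma_{n,1}-\gamma_1)\,\gamma_{n,2}'\,dt+\int_0^1\gamma_1\,(\gamma_{n,2}'-\gamma_2')\,dt=:I_n+II_n.
\]
The term $I_n$ is estimated at once: $|I_n|\le\|\gamma_n-\gamma\|_\infty\int_0^1|\gamma_{n,2}'|\,dt\le M\,\|\gamma_n-\gamma\|_\infty\to0$, and this is precisely where the uniform bound $\ell(\gamma_n)\le M$ enters — no bound on the individual Lipschitz constants of the $\gamma_n$ is available or needed. For $II_n$ one cannot control $\gamma_{n,2}'-\gamma_2'$ pointwise, so I would integrate by parts: $\gamma_1$ and $\gamma_{n,2}-\gamma_2$ are absolutely continuous on $[0,1]$, and since all the curves are closed the boundary term $[\gamma_1(\gamma_{n,2}-\gamma_2)]_0^1$ vanishes, giving $II_n=-\int_0^1\gamma_1'\,(\gamma_{n,2}-\gamma_2)\,dt$. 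Hence $|II_n|\le\|\gamma_n-\gamma\|_\infty\int_0^1|\gamma_1'|\,dt=\ell(\gamma)\,\|\gamma_n-\gamma\|_\infty\to0$, and combining the two bounds yields $A(\gamma_n)\to A(\gamma)$.

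The only step that needs any thought — the ``main obstacle'', modest as it is — is the second integral $II_n$: the approximating curves need not have derivatives converging to $\gamma'$ in any sense, so the derivative must be transferred off the varying factor $\gamma_{n,2}-\gamma_2$ and onto $\gamma_1$, whose total variation is fixed and finite; the hypothesis that the curves are closed is exactly what makes the resulting boundary term disappear. Everything else is a direct application of the uniform length bound and the definition of $A$.
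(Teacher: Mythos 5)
Your proof is correct and uses essentially the same approach as the paper: split the difference $A(\gamma_n)-A(\gamma)$ into two cross terms and integrate by parts on the one carrying the uncontrolled derivative. The paper's decomposition is a mirror image of yours (it puts $\gamma_{n,1}$ in the factor that gets differentiated after integration by parts, and bounds the other term using $\|\gamma'\|_\infty$ rather than $\ell(\gamma)$), but the key ideas — integration by parts with vanishing boundary term, and the uniform length bound $\ell(\gamma_n)\leq M$ replacing any uniform Lipschitz bound — are identical.
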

\begin{proof}
If we write $\gamma_n=(\gamma_{n,1},\gamma_{n,2})$, then
\begin{align*}
    A(\gamma) - A(\gamma_n) &= \int_{0}^{1} \gamma_1  \gamma_2' \, dt - \int_{0}^{1} \gamma_{n,1}  \gamma_{n,2}' \, dt \\
    &= \int_{0}^{1} (\gamma_1-\gamma_{n,1})  \gamma_2' \, dt + \int_{0}^{1} \gamma_{n,1}(\gamma_2' - \gamma_{n,2}')\, dt\\
    &= \int_{0}^{1} (\gamma_1-\gamma_{n,1})  \gamma_2' \, dt - \int_{0}^{1} \gamma_{n,1}'(\gamma_2 - \gamma_{n,2})\, dt\, ,
\end{align*}
where the last equality follows from the integration by parts. If we show that the the right-hand side converges to zero we are done.
Since $|\gamma'|$ is bounded and $\gamma_{n,1}\to\gamma_1$ uniformly, it follows that
$$
\left \vert \int_{0}^{1} (\gamma_1-\gamma_{n,1})  \gamma_2' \, dt \right \vert \leq \Vert\gamma'\Vert_\infty \int_0^{1} |\gamma_1-\gamma_{n,1}| \, dt
\stackrel{n\to\infty}{\longrightarrow} 0.
$$
Next, we have
$$
\left \vert \int_{0}^{1} \gamma_{n,1}'(\gamma_2 - \gamma_{n,2})\, dt\right \vert \leq
\|\gamma_2 - \gamma_{n,2}\|_\infty \int_{0}^{1} |\gamma_{n,1}'| \, dt\leq
M\|\gamma_2 - \gamma_{n,2}\|_\infty\stackrel{n\to\infty}{\longrightarrow} 0.
$$
The proof is complete.
\end{proof}

\begin{proof}[Proof of Theorem~\ref{TM7}]
The implication ($\Rightarrow$) easily follows from Theorem~\ref{T17} and from Lemma~\ref{T32}. 
Therefore, it remains to prove the implication ($\Leftarrow$).
Some ideas used below are based on the proof of Theorem~5 in \cite{wengery}.

Let $f\colon [0,1]^n \to X$ be Lipschitz, and such that $\rank\md(f,x)\leq 1$ a.e.
We need to prove that $f$ factors through an $\bbbr$-tree.
Since $f$ factors through the space $Z:=Z_f$ constructed as in Section~\ref{FLM}, it suffices to prove that $Z$ is an $\bbbr$-tree.

Let $\psi$ and $\phi$ be the Lipschitz maps $\psi\colon [0,1]^n \to Z$ and $\phi \colon Z \to X$, constructed in Section~\ref{FLM}, so $f = \phi \circ \psi$.  


By Corollary~\ref{FTT5}, $Z$ is geodesic, so according to part (g) of Lemma~\ref{T33}, it suffices to show that  for every Lipschitz curve $\alpha \colon \bbbs^1 \to Z$ and every Lipschitz function $\pi\colon  Z \to \mathbb{R}^2$,
\begin{equation}
\label{elm1}
A(\pi \circ \alpha)=\int_{\bbbs^1}(\pi\circ\alpha)^*(x\, dy)=0.
\end{equation}
First, assume that $\alpha=\psi\circ\gamma:\bbbs^1\to Z$, where $\gamma:\bbbs^1\to [0,1]^n$ is a Lipschitz curve. Let $g:\bar{\bbbb}^2\to [0,1]^n$ be a Lipschitz extension of $\gamma$ to the closed unit disc. 

For technical reasons that will be explained later, we need an extension $g$ with the property that it maps the interior of the disc to the interior of the cube $g(\bbbb^2)\subset (0,1)^n$. That however, can be easily guaranteed. Indeed, if $g:\bar{\bbbb}^2\to [0,1]^n$ is any Lipschitz extension of $\alpha$, then 
$\tilde{g}:\bar{\bbbb}^2\to [0,1]^n$ defined by
$$
\tilde{g}(x)=
\big(g(x)-(1/2,1/2,\ldots,1/2)\big)|x|+
(1/2,1/2,\ldots,1/2),
$$
agrees with $g$ on the boundary of the disc, $|x|=1$ (and hence $\tilde{g}$ is an extension of $\alpha$), and when $|x|<1$, $\tilde{g}(x)$ is in the interior of the segment connecting the center of the cube $(1/2,1/2,\ldots,1/2)$ to $g(x)$ so $\tilde{g}(x)$ belongs to the interior $(0,1)^n$ of the cube.

Then, $\pi\circ\psi\circ g:\bar{\bbbb}^2\to\bbbr^2$ is a Lipschitz extension of $\pi\circ\psi\circ\gamma:\bbbs^1\to\bbbr^2$, and Lemma~\ref{tt1} yields
\begin{equation}
\label{eq19}
A(\pi \circ \alpha) = A(\pi\circ\psi\circ\gamma)=\int_{\bbbb^2} \det D(\pi \circ \psi \circ g)\, .
\end{equation}
Clearly, for Lipschitz mappings $h:\bbbr^n\supset\Omega\to\bbbr^m$, $\rank Dh(x)=\rank\md(h,x)$, whenever $h$ is differentiable at $x$, so Lemma~\ref{T32},
and part (a) of Theorem~\ref{T27} give
\begin{equation}
\label{eq18}
\rank D(\pi\circ\psi\circ g)=
\rank \md(\pi\circ\psi\circ g)\leq \rank \md(\psi\circ g)=\rank \md(f\circ g)\
\text{a.e.}
\end{equation}
Since by assumptions, $\rank \md(f) \leq 1$ a.e., Proposition~\ref{T28} implies that
$\rank \md(f \circ g) \leq 1$ a.e. 
(this is where we use the assumption that $g(\bbbb^2)\subset (0,1)^n$),
and hence 
$\rank D(\pi\circ\psi\circ g)\leq 1$ by \eqref{eq18}. This and \eqref{eq19} proves that $A(\pi\circ\alpha)=0$.

That is, we proved \eqref{elm1} for curves $\alpha:\bbbs^1\to Z$ that factor through $[0,1]^n$, $\alpha=\psi\circ\gamma$, while we need to prove \eqref{elm1} for all Lipschitz curves $\alpha:\bbbs^1\to Z$. 
We can however, easily pass to the general case with the help of Lemmata~\ref{T34} and~\ref{ma1}.

Let $\alpha:\bbbs^1\to Z$ be a Lipschitz curve. Let $\alpha_j=\psi\circ\gamma_j$ be the approximation described in Lemma~\ref{T34}. Note that by Lemma~\ref{T34}(d) we may choose the curves $\gamma_j$ are closed.

Then $A(\pi\circ\alpha_j)=0$, because of the special form of $\alpha_j$. Since $\alpha_j\to\alpha$ uniformly, and the curves $\alpha_j$ have uniformly bounded length, the curves $\pi\circ\alpha_j:\bbbs^1\to\bbbr^2$ have uniformly bounded length and they converge uniformly to $\pi\circ\alpha$, so Lemma~\ref{ma1} yields
$$
0=A(\pi\circ\alpha_j)\to A(\pi\circ\alpha)
$$
and \eqref{elm1} follows. The proof is complete.
\end{proof}

\section{Proof of Theorem~\ref{TM1B}}
\label{PTM3}

\begin{proof}[Proof of Theorem~\ref{TM1B}]
We will prove the theorem by proving implications
$(b')\Rightarrow(c')\Rightarrow(d')\Rightarrow(e')\Rightarrow(f')\Rightarrow(b')$.

\noindent
\underline{Implication $(b')\Rightarrow(c')$}.
According to Proposition~\ref{Tl14}, almost all $x \in E\cap (0,1)^{n+m}$ have the property that for any $j \in \bbbn$, and for all sufficiently small $r>0$, the set $f(B(x,r))$ can be covered by $j^{k}$ balls each of radius 
$3\sqrt{k}Lr/j$, where $L=\lip(f)$ and $k=\rank\md(f,x)\leq n-1$. Since 
$3\sqrt{k}Lr/j\leq 3\sqrt{n}Lr/j$, for all sufficiently small $r>0$, we have
$$
\H^n_\infty (f(B(x,r))) \lesssim_{n,L} j^{k}\Big(\frac{r}{j}\Big)^n,
\quad
\text{so}
\quad
\Theta^{*n}(f,x) \lesssim_{n,L} \frac{1}{j^{n-k}}\, .
$$
Since $n-k\geq n- (n-1) = 1,$ and $j$ was arbitrary, we have $\Theta^{*n}(f,x)=0$. Since, this is true for almost all $x\in E\cap (0,1)^{n+m}$, $(c')$ is proved.

\noindent
\underline{Implication $(c')\Rightarrow(d')$} is obvious.

\noindent
\underline{Implication $(d')\Rightarrow(e')$} is a direct consequence of the following estimate.
\begin{proposition}
\label{T37}
If $f:Q_o=[0,1]^{n+m}\to X$, $n\geq 1$, $m\geq 0$, is a Lipschitz map into a metric space and $E\subset Q_o$ is a measurable set, then
$$
\H^{n,m}_\infty(f,E) \lesssim_{n,m} \int_E \Theta_*^n(f,x) \, d\H^{n+m}(x)\, .
$$
\end{proposition}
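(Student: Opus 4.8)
The plan is to build, around almost every point of $E$, a fine family of dyadic subcubes $Q$ of $Q_o$ on which the quantity $\H^n_\infty(f(Q))(\diam Q)^m$ is controlled by $\Theta^n_*(f,\cdot)$, and then to select from it a countable subfamily with pairwise disjoint interiors that covers $E$ up to an $\H^{n+m}$-null set. First I would reduce to the case $E\subset(0,1)^{n+m}$: since $\partial Q_o$ is $\H^{n+m}$-null, \eqref{eq28} allows us to discard $E\cap\partial Q_o$, and $\Theta^n_*(f,\cdot)$ is only defined in the interior anyway. We may assume the right-hand side of the proposition is finite, and, since $\Theta^n_*(f,\cdot)$ need not be measurable, fix a measurable $\phi\colon Q_o\to[0,\infty]$ with $\phi\geq\Theta^n_*(f,\cdot)$ on $E$, $\phi\equiv0$ on $Q_o\setminus E$, and $\int_{Q_o}\phi\,d\H^{n+m}$ as close as we wish to that right-hand side (interpreted as an upper integral if necessary); finiteness forces $\phi<\infty$ a.e. Let $N\subset E$ be the $\H^{n+m}$-null set of points where $\phi=+\infty$ or which are not Lebesgue points of $\phi$ relative to the dyadic filtration.

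Now fix $\delta>0$. For each $x\in E\setminus N$ the definition of $\liminf$ furnishes arbitrarily small $\rho>0$ with $\H^n_\infty(f(B(x,\rho)))<(\phi(x)+\delta)\,\omega_n\rho^n$, and among those I keep only the $\rho$ small enough that $B(x,\rho)\subset Q_o$ and such that the dyadic cube $Q=Q(x,\rho)\ni x$ of side $2^{-k}$ determined by $\sqrt{n+m}\,2^{-k}<\rho\leq 2\sqrt{n+m}\,2^{-k}$ additionally satisfies $\phi(x)\,\H^{n+m}(Q)\leq\int_Q\phi\,d\H^{n+m}+\delta\,\H^{n+m}(Q)$ --- possible for all large $k$ because $x\notin N$. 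Such a cube has $Q\subset B(x,\rho)\subset Q_o$, $\diam Q\leq\rho\leq 2\diam Q$, and $(\diam Q)^{n+m}=(n+m)^{(n+m)/2}\H^{n+m}(Q)$, so from $\H^n_\infty(f(Q))\leq\H^n_\infty(f(B(x,\rho)))$ a one-line computation gives
\[
\H^n_\infty(f(Q))\,(\diam Q)^m\ \lesssim_{n,m}\ (\phi(x)+\delta)\,\H^{n+m}(Q)\ \leq\ \int_Q\phi\,d\H^{n+m}+2\delta\,\H^{n+m}(Q).
\]

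As $x$ ranges over $E\setminus N$ and $\rho$ over the admissible radii, the cubes $Q(x,\rho)$ form a fine cover of $E\setminus N$ by dyadic subcubes of $Q_o$; since these cubes have generation bounded below, their maximal elements form a countable family $\{Q_i\}$ with pairwise disjoint interiors and $E\setminus N\subset\bigcup_i Q_i\subset Q_o$. Because $E\setminus\bigcup_i Q_i\subset N$ is null, \eqref{eq28} together with the admissibility of $\{Q_i\}$ as a dyadic cover gives
\[
\H^{n,m}_\infty(f,E)=\H^{n,m}_\infty\Big(f,\ E\cap\bigcup_i Q_i\Big)\leq\sum_i\H^n_\infty(f(Q_i))(\diam Q_i)^m\lesssim_{n,m}\sum_i\Big(\int_{Q_i}\phi\,d\H^{n+m}+2\delta\,\H^{n+m}(Q_i)\Big).
\]
Using that the $Q_i$ have disjoint interiors, that $\phi\equiv0$ off $E$, and that $\sum_i\H^{n+m}(Q_i)\leq\H^{n+m}(Q_o)=1$, the right-hand side is $\lesssim_{n,m}\int_E\phi\,d\H^{n+m}+2\delta$. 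Letting $\delta\to0$ and then taking the infimum over all admissible $\phi$ gives the claimed inequality.

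The one genuinely delicate point is that the density hypothesis only controls $\H^n_\infty(f(B(x,\rho)))$ along a \emph{sequence} of radii (we are dealing with a $\liminf$), which rules out a single-scale covering and forces the fine/Vitali-type selection above; hand in hand with this, the pointwise bound at $x$ must be upgraded to the integral bound involving $\int_Q\phi$ rather than the useless $\phi(x)\,\H^{n+m}(Q)$, and this is exactly where dyadic Lebesgue differentiation enters. Non-measurability of $\Theta^n_*(f,\cdot)$ is a minor nuisance absorbed into the majorant $\phi$, and everything else is bookkeeping.
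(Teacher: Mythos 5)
Your proof is correct and follows essentially the same route as the paper: produce a fine cover of almost all of $E$ by dyadic subcubes on which the $\liminf$-density estimate and Lebesgue differentiation simultaneously control $\H^n_\infty(f(Q))(\diam Q)^m$ by $\int_Q$, pass to the maximal dyadic cubes to get a disjoint subfamily covering $E$ up to a null set, then sum and invoke \eqref{eq28}. The only difference is cosmetic: the paper takes $\Theta_*^n(f,\cdot)$ to be measurable outright and chooses an open $U\supset A$ with $\int_U\Theta_*^n<\int_E\Theta_*^n+\eps$ to absorb the overshoot of the cubes past $E$, whereas you replace that by a measurable majorant $\phi$ vanishing off $E$, which accomplishes the same bookkeeping.
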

\begin{remark}
This is a slight improvement of \cite[Proposition~5.1]{HZ}. The proof presented below is similar to the one in \cite{HZ}, but Proposition~5.1 in \cite{HZ} involved a slightly different definition of $\H^{n,m}_\infty$ than the one we use now, and this is one of the reasons for providing details.
\end{remark}
\begin{proof}

The function $\Theta_*^n(f,\cdot)$ is integrable as bounded and measurable. Let
$$
A=\{x\in E\cap (0,1)^{n+m}:\, \text{$x$ is a Lebesgue point of $\Theta_*^n(f,\cdot)$}\}.
$$
Fix $\eps>0$. There is an open set $U\subset Q_o$, such that $A\subset U$ and
$$
\int_U\Theta_*^n(f,x)\, d\H^{n+m}(x)<
\int_A\Theta_*^n(f,x)\, d\H^{n+m}(x)+\eps=
\int_E\Theta_*^n(f,x)\, d\H^{n+m}(x)+\eps.
$$
Let $x\in  A$. By the definition of $\Theta^n_*(f,x)$, there is a sequence $r_x^i\searrow 0$ such that
$$
\frac{\H^n_\infty(f(B(x,r_x^i)))}{\omega_n(r_x^i)^n}<
\Theta^n_*(f,x)+\eps,
\quad
B(x,r_x^i)\subset U.
$$
For each $i$, we can find a closed dyadic cube $Q_x^i$ such that
$$
x\in Q_x^i\subset B(x,r_x^i)
\quad
\text{and}
\quad
r_x^i\lesssim_{n,m}\diam Q_x^i,
$$
so
\begin{equation}
\label{eq26}
\frac{\H^n_\infty(f(Q_x^i))}{\omega_n(r_x^i)^n}<\Theta_*^n(f,x)+\eps,
\quad
Q_x^i\subset U.
\end{equation}
Since averages of $\Theta_*^n(f,\cdot)$ over the cubes $Q_x^i$ converge to $\Theta_*^n(f,x)$, as $i\to \infty$, by assuming that all $r_x^i$ are sufficiently small, we may guarantee that
\begin{equation}
\label{eq27}
\Theta_*^n(f,x)<
\mvint_{Q_x^i}\Theta_*^n(f,y)\, d\H^{n+m}(y)+\eps.
\end{equation}
Hence \eqref{eq26} and \eqref{eq27} yield
$$
\H^n_\infty(f(Q_x^i))(\diam Q_x^i)^m\lesssim_{n,m}
\int_{Q_x^i}\Theta_*^n(f,y)\, d\H^{n+m}(y)+2\eps(\diam Q_x^i)^{n+m}.
$$
The collection of (closed) dyadic cubes
$$
\mathcal{Q}=\{Q_x^i:\, x\in A,\ i\in\bbbn\}
$$
forms a covering of $A$. Dyadic cubes have an important property that given two dyadic cubes, they have disjoint interiors or one is contained in another one. Thus leaving in $\mathcal{Q}$ only the largest cubes that are not contained in any larger cube from $\mathcal{Q}$, we obtain a subfamily $\{ Q_j\}_j\subset\mathcal{Q}$ of dyadic cubes with pairwise disjoint interiors such that $A\subset\bigcup_j Q_j$.
Since $\H^{n+m}(E\setminus A)=0$, the definition of $\H^{n,m}_\infty$ along with \eqref{eq28} yield
\begin{equation*}
\begin{split}
&\H^{n,m}_\infty(f,E)=\H^{n,m}_\infty(f,A)
\leq 
\sum_j \H_\infty^n(f(Q_j))(\diam Q_j)^m\\
&\lesssim_{n,m}
\sum_j \int_{Q_j}\Theta_*^n(f,y)\, d\H^{n+m}(y) +
2\eps\sum_j (\diam Q_j)^{n+m}\\
&\lesssim_{n,m}
\int_U \Theta_*^n(f,y)\, d\H^{n+m}(y) +2\eps
\leq
\int_E \Theta_*^n(f,y)\, d\H^{n+m}(y) +3\eps
\end{split}    
\end{equation*}
and the result follows upon letting $\eps\to 0$.
\end{proof}

\noindent
\underline{Implication $(e')\Rightarrow(f')$} is obvious because of \eqref{elm6}.

\noindent\underline{Implication $(f')\Rightarrow(b')$.}
We will use here notation and facts from Section~\ref{HHC}.
According to Lemmata~\ref{T5} and~\ref{T7} it suffices to prove the following result.
\begin{proposition}
\label{T26}
If $f=(f_1,f_2,\ldots)\colon Q_o=[0,1]^{n+m}\to \ell^\infty$, $n\geq 1$, $m\geq 0$, is Lipschitz, $E\subset Q_o$ is measurable, and $\hat{\H}^{n,m}_\infty(f,E)=0$, then $\rank Df(x)\leq n-1$, for $\H^{n+m}$-almost all $x\in E$.
\end{proposition}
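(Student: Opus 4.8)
The plan is to argue by contradiction: assume that $\rank Df(x)\geq n$ on a subset of $E$ of positive $\H^{n+m}$-measure, and deduce that $\hat{\H}^{n,m}_\infty(f,E)>0$. By Lemma~\ref{T7}, on this bad set some $n\times n$ minor of the componentwise derivative is nonzero; after relabeling the coordinates $f_{i_1},\dots,f_{i_n}$ and discarding a null set, we may pass to a measurable set $E'\subset E$ of positive measure together with fixed indices so that the map $F=(f_{i_1},\dots,f_{i_n})\colon Q_o\to\bbbr^n$ has $\det D(F|_{\text{$n$-plane}})\neq 0$ at every point of $E'$. The key point is that $\pi\circ f=F$, where $\pi\colon\ell^\infty\to\bbbr^n$ is the coordinate projection onto those $n$ slots, and $\pi$ is $1$-Lipschitz; hence for any set $A\subset Q_o$ we have $\diam f(A)\geq\diam F(A)$, and more usefully $\H^n_\infty(f(A))\geq\H^n_\infty(F(A))=\H^n_\infty(\pi(f(A)))$. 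So it suffices to bound $\sum_i\H^n_\infty(F(A_i))(\diam A_i)^m$ from below over all coverings $E'\subset\bigcup_i A_i\subset Q_o$.

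The heart of the argument is a quantitative lower bound coming from the coarea-type geometry of $F\colon\bbbr^{n+m}\to\bbbr^n$. Write $\bbbr^{n+m}=\bbbr^n\times\bbbr^m$ with variables $(u,v)$; for $\H^m$-a.e.\ $v$, the slice $F(\cdot,v)$ is differentiable a.e.\ and, on the slice of $E'$, has nonvanishing Jacobian, so by the classical area formula (Lemma~\ref{T8} in the Euclidean case, or Lemma~\ref{T31}) it has positive $\H^n$-image measure. Given any covering $\{A_i\}$ of $E'$ by arbitrary sets, each $A_i$ has $\H^n_\infty(F(A_i))\gtrsim_n\H^n_\infty(F(A_i\cap(\bbbr^n\times\{v\})))$ for every $v$ — indeed one can even bound $\H^n_\infty(F(A_i))$ below by the $\H^n$-measure of the image of the full slice — while the thickness $(\diam A_i)^m$ controls how much $v$-extent $A_i$ covers. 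Integrating the slice inequality in $v$, one gets
$$
\sum_i\H^n_\infty(F(A_i))(\diam A_i)^m
\gtrsim_{n,m}\sum_i\int_{\bbbr^m}\H^n_\infty\!\big(F(A_i\cap(\bbbr^n\times\{v\}))\big)\,d\H^m(v)
\gtrsim_{n,m}\int_{\bbbr^m}\H^n_\infty\!\big(F(E'\cap(\bbbr^n\times\{v\}))\big)\,d\H^m(v),
$$
where the last step uses that the slices of the $A_i$ cover the slices of $E'$ together with countable subadditivity of $\H^n_\infty$. By the area formula applied slice-wise together with the positivity of $|\det|$ on $E'$ and Fubini, the right-hand side is a strictly positive constant (depending on $f$ and $E'$ but not on the covering). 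Taking the infimum over coverings gives $\hat{\H}^{n,m}_\infty(f,E)\geq\hat{\H}^{n,m}_\infty(f,E')>0$, the desired contradiction.

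The step I expect to be the main obstacle is making the slicing inequality
$$
\H^n_\infty(F(A))\gtrsim_{n,m} (\diam A)^{-m}\int_{\bbbr^m}\H^n_\infty\!\big(F(A\cap(\bbbr^n\times\{v\}))\big)\,d\H^m(v)
$$
rigorous for an arbitrary (possibly very irregular) set $A$: one needs that the set of slices of $A$ is essentially supported on an interval of length $\lesssim\diam A$ in the $v$-direction, and that Hausdorff content of a projection dominates a genuine integral of contents of fibers, which is a Fubini-type statement for $\H^n_\infty$ that is false for contents in general and must be replaced by the correct inequality (content of the image of $A$ versus content of a single well-chosen slice, or an integralgeometric inequality of Eilenberg type). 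The cleanest route is probably to avoid integrating in $v$ altogether: fix one good value of $v$ for which the slice $E'\cap(\bbbr^n\times\{v\})$ still has positive $\H^n$-measure and for which $F(\cdot,v)$ restricted to that slice has positive-measure image (possible for a positive-measure set of $v$ by Fubini plus the area formula), and observe directly that $\{A_i\cap(\bbbr^n\times\{v\})\}$ covers that slice while $\H^n_\infty(f(A_i))\geq\H^n_\infty(\pi(f(A_i)))\geq\H^n_\infty(F(A_i\cap(\bbbr^n\times\{v\})))$; summing and using that $\H^n_\infty$ on $\bbbr^n$ is just Lebesgue outer measure (Lemma~\ref{T36}) gives $\sum_i\H^n_\infty(f(A_i))(\diam A_i)^m\geq (\diam Q_o)^m\cdot 0$ — which forces a more careful bookkeeping of the $(\diam A_i)^m$ factor, and this bookkeeping (ensuring the $m$-dimensional "length" is not lost) is precisely the delicate part to get right.
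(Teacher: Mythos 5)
Your first step (projecting to $F=\pi\circ f\colon Q_o\to\bbbr^n$ via countably many choices of $n$ target indices) is exactly the paper's reduction. After that you diverge: you slice the \emph{domain} by $n$-planes $\bbbr^n\times\{v\}$ and invoke the area formula on each slice, whereas the paper instead applies the classical coarea formula (Lemma~\ref{T31}) to $F$ and bounds $\int_{\bbbr^n}\H^m(F^{-1}(y)\cap E)\,d\H^n(y)$ from above by the $\hat{\H}^{n,m}_\infty$-covering sums, using that $F^{-1}(y)\cap A_i$ has diameter $\leq\diam A_i$ and is nonempty only for $y\in F(A_i)$. These are dual perspectives and both can be made to work, but the coarea route has a concrete advantage that you partly sense: it never requires you to select $n$ \emph{source} coordinates along which the restricted Jacobian is nonzero. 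In your slicing argument you must also pin down, via a further countable exhaustion and a permutation of the domain variables, a fixed $n$-plane on which $\det D(F|_{\text{slice}})\neq 0$ holds on a positive-measure piece of $E'$; you gesture at this (``$\det D(F|_{\text{$n$-plane}})$'') but never actually do it, and without it the claim that ``the slice $F(\cdot,v)$ has nonvanishing Jacobian on the slice of $E'$'' is unjustified.

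The inequality you flag as the main obstacle,
$\H^n_\infty(F(A))(\diam A)^m\gtrsim_{n,m}\int_{\bbbr^m}\H^n_\infty\big(F(A\cap(\bbbr^n\times\{v\}))\big)\,d\H^m(v)$,
is in fact not a subtle Fubini-type statement for contents but a triviality: the integrand is $\leq\H^n_\infty(F(A))$ for every $v$, and it is supported on the $v$-projection of $A$, whose $\H^m$-measure is $\lesssim_m(\diam A)^m$. The genuine delicacy you should worry about instead is measurability of $v\mapsto\H^n_\infty(F(A_i\cap(\bbbr^n\times\{v\})))$ for the \emph{arbitrary} (not Borel, not even measurable) sets $A_i$ allowed in the definition of $\hat{\H}^{n,m}_\infty$; the paper resolves the analogous issue by replacing $F(A_i)$ with a Borel superset $Z_i$ of the same measure (Lemma~\ref{T35}) and passing to upper integrals. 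Your final ``cleanest route'' (fix a single good $v$) is, as you yourself realize, broken: with a single slice the $(\diam A_i)^m$ factors vanish from the bookkeeping and the bound degenerates. So your main argument is a genuinely different and essentially workable route, but the write-up conflates a non-issue (the slicing inequality) with the real gaps (the choice of $n$-plane in the source and the measurability of slice contents), and the alternative you propose at the end does not close.
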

\begin{proof}

We shall ignore the points on the boundary of the cube. If $\rank Df(x) \geq n$ at some $x$, then there exist indices $i_1<i_2<\ldots<i_n$ such that $\nabla f_{i_1}(x), \ldots, \nabla f_{i_n}(x)$ are linearly independent. If we let $\pi:\ell^\infty\to\bbbr^n$ be the projection
$\pi(x)=(x_{i_1},\ldots,x_{i_n})$, then the latter statement is equivalent to $\rank D(\pi\circ f)(x)=n$. There are countably many possibilities for choosing $n$ natural numbers, so, the proposition will follow once we prove that for any $i_1<i_2<\ldots<i_n$, the equality $\rank D(\pi\circ f)(x)=n$ occurs only on a null subset of $E$, i.e.\ 
$\rank D(\pi\circ f)(x)\leq n-1$, for $\H^{n+m}$-almost all $x\in E$.

But this follows immediately from the next lemma since by easy estimates\footnote{Note that $\pi$ is a Lipschitz map with the Lipschitz constant $\sqrt{n}$.}
$$
\hat{\H}^{n,m}_\infty(\pi\circ f,{E})
\leq (\lip\pi)^n\hat{\H}^{n,m}_\infty(f,E)=0.
$$
We need to apply the next lemma to $F=\pi\circ f:Q_o\to\bbbr^n$.
Thus the next lemma is the last missing piece in the proof of the implication.
\begin{lemma}
\label{T22}
If $F:Q_o=[0,1]^{n+m}\to\bbbr^{n}$, $n\geq 1$, $m\geq 0$ is Lipschitz, $E\subset Q_o$ is measurable, and 
$\hat{\H}^{n,m}_\infty(F,E)=0$, then $\rank DF(x)\leq n-1$ for $\H^{n+m}$-almost all $x\in E$.
\end{lemma}
\begin{proof}
According to the classical co-area formula (Lemma~\ref{T31})
$$
\int_E |J_F(x)|\, d\H^{n+m}(x)=\int_{\bbbr^n}\H^m(F^{-1}(y)\cap E)\, d\H^n(y).
$$
Note that if $F$ is differentiable at $x$, then $|J_F(x)|=0$ if and only if $\rank DF(x)\leq n-1$. Therefore, it suffices to show that $|J_F(x)|=0$ for almost all $x\in E$. To this end, it suffices to show that $\H^m(F^{-1}(y)\cap E)=0$ for
$\H^n$-almost all $y\in\bbbr^n$.

Assume that $m\geq 1$. A similar argument works for $m=0$.

Let $\eps>0$ be given.
Since $\hat{\H}^{n,m}_\infty(F,E)=0$, it follows that there is a covering $E\subset\bigcup_{i} A_i\subset Q_o$, such that
$$
\sum_{i}\H^n_\infty(F(A_i))(\diam A_i)^m<\eps.
$$
According to Lemmata~\ref{T35} and~\ref{T36}, there are Borel sets $Z_i\subset\bbbr^n$ such that $F(A_i)\subset Z_i$ and $\H^n_\infty(F(A_i))=\H^n(F(A_i))=\H^n(Z_i)$.
We have
\begin{equation*}
\begin{split}
\H^m_\infty (F^{-1}(y)\cap E)
&\lesssim_m 
\sum_{i} (\diam (F^{-1}(y)\cap A_i))^m
=
\sum_i (\diam (F^{-1}(y)\cap A_i))^m \chi_{F(A_i)}(y)\\
&\leq 
\sum_i (\diam A_i)^m\chi_{Z_i}(y).
\end{split}    
\end{equation*}
Since the function on the right hand side is Borel, integration yields
$$
\int_{\bbbr^n}^* \H^m_\infty(F^{-1}(y)\cap E)\, d\H^n(y)
\lesssim_m
\sum_i (\diam A_i)^m \H^n(Z_i)
=
\sum_i (\diam A_i)^m \H^n_\infty (F(A_i))<\eps.
$$
Since this is true for any $\eps>0$, we conclude that
$$
\int_{\bbbr^n}^* \H^m_\infty(F^{-1}(y)\cap E)\, d\H^n(y)=0.
$$
Thus, $\H^m_\infty(F^{-1}(y)\cap E)=0$, and hence  $\H^m(F^{-1}(y)\cap E)=0$, for $\H^n$-almost all $y\in\bbbr^n$. The proof of Lemma~\ref{T22} is complete.
\end{proof}
Lemma~\ref{T22} completes the proof of Proposition~\ref{T26} and hence that proof of the implication.
\end{proof}
This was the last implication to prove and therefore, the proof of Theorem~\ref{TM1B} is complete.
\end{proof}

\end{document}